\documentclass[11pt, a4paper,leqno]{amsart}
\usepackage{amsmath,amsthm,amscd,amssymb,amsfonts, amsbsy}
\usepackage{latexsym}
\usepackage{txfonts}
\usepackage{exscale}

\usepackage[colorlinks,citecolor=red,pagebackref,hypertexnames=false]{hyperref}
\usepackage{color}

\newcommand{\Rd}{\color{red}}




\parskip=3pt



\calclayout
\allowdisplaybreaks


\theoremstyle{plain}
\newtheorem{theorem}[equation]{Theorem}
\newtheorem{lemma}[equation]{Lemma}
\newtheorem{corollary}[equation]{Corollary}
\newtheorem{proposition}[equation]{Proposition}

\newtheorem{observation}[equation]{Observation}
\theoremstyle{definition}
\newtheorem{definition}[equation]{Definition}

\theoremstyle{remark}
\newtheorem{remark}[equation]{Remark}

\numberwithin{equation}{section}

\newcommand{\RR}{{\mathbb{R}}}
\newcommand{\eps}{\varepsilon}

\newcommand{\dist}{\operatorname{dist}}

\newcommand{\Reg}{\operatorname{Reg}}
\newcommand{\GLem}{\operatorname{GLem}}
\newcommand{\WGLem}{\operatorname{WGLem}}
\newcommand{\BWGLem}{\operatorname{BWGLem}}
\newcommand{\BP}{\operatorname{BP}}
\newcommand{\BPLG}{\operatorname{BPLG}}
\newcommand{\GPG}{\operatorname{GPG}}
\newcommand{\LG}{\operatorname{LG}}

\newcommand{\re}{\mathbb{R}}
\newcommand{\rn}{\mathbb{R}^n}

\newcommand{\ree}{\mathbb{R}^{n+1}}

\newcommand{\dd}{\mathbb{D}}

\newcommand{\C}{\mathcal{C}}

\newcommand{\F}{\mathcal{F}}

\newcommand{\M}{\mathcal{M}}

\newcommand{\B}{\mathcal{B}}

\newcommand{\sbf}{{\bf S}}

\newcommand{\G}{\mathcal{G}}

\newcommand{\mut}{\mathfrak{m}}

\renewcommand{\emptyset}{\mbox{\textup{\O}}}

\DeclareMathOperator{\diam}{diam}

\begin{document}
\allowdisplaybreaks

\title[Corona implies $\BP^2$]{Coronizations and big pieces in metric spaces}

\author{S. Bortz}
\address{Department of Mathematics
\\
University of Alabama
\\ 
Tuscaloosa, AL, 35487, USA}
\email{sbortz@ua.edu}
\author{J. Hoffman}
\address{Department of Mathematics
\\
University of Missouri
\\
Columbia, MO 65211, USA}
\email{jlh82b@mail.missouri.edu}
\author{S. Hofmann}
\address{
Department of Mathematics
\\
University of Missouri
\\
Columbia, MO 65211, USA} 
\email{hofmanns@missouri.edu}
\author{J-L. Luna Garcia}
\address{Department of Mathematics
\\
University of Missouri
\\
Columbia, MO 65211, USA}
\email{jlwwc@mail.missouri.edu}
\author{K. Nystr\"om}
\address{Department of Mathematics, Uppsala University, S-751 06 Uppsala, Sweden}
\email{kaj.nystrom@math.uu.se}

\thanks{The authors J. H., S. H., and J-L. L. G.  were partially supported by NSF grants  
DMS-1664047 and DMS-2000048. K.N. was partially supported by grant  2017-03805 from the Swedish research council (VR)}

\subjclass[2010]{28A75, 30L99, 43A85.}
\date{\today}

\keywords{Carleson measures, corona decompositions, big pieces, geometric lemmas}

\begin{abstract}
We prove that coronizations with respect to arbitrary
$d$-regular sets (not necessarily graphs) imply big pieces squared of these (approximating) sets. This is
known (and due to  David and Semmes in the case of sufficiently large co-dimension,
and to Azzam and Schul in general) in the (classical) setting of Euclidean spaces with
Hausdorff measure of integer dimension,  where the approximating sets are Lipschitz graphs.
Our result is a far reaching generalization of these results and we prove that  coronizations imply big pieces squared is a generic property. In particular, our result applies, when suitably
interpreted, in metric spaces having  a fixed positive (perhaps non-integer) dimension, equipped with a Borel regular measure and with arbitrary approximating sets. As a novel application we highlight how to utilize  this general setting in the context of parabolic uniform rectifiability.
\end{abstract}

\maketitle

\tableofcontents

\section{Introduction}
The monumental works of G. David and S. Semmes \cite{DS1}, \cite{DS2} concerning equivalent characterization of uniformly rectifiable (UR) sets $E\subset\mathbb R^n$ remain a source of continuous inspiration for anyone interested in geometry and analysis. Their results apply in the Euclidean metric space
$(X,\dist,\mu) = (\rn,|\cdot|_2,H^d)$, where $d < n$ and $H^d$ is the $d$-dimensional  Hausdorff measure.
The following three characterizations  of uniformly rectifiability of an Ahlfors-regular set $E$ are proved in \cite{DS1} (we refer to \cite{DS1} for definitions and
precise statements):
\begin{itemize}
    \item $E$ admits a coronization with respect to Lipschitz graphs.
    \item $E$ has big pieces of bi-Lipschitz images.
    \item $E$ satisfies a `geometric lemma' quantified in terms of $\beta$-numbers.
\end{itemize}
In particular, in \cite{DS1}  it is proved that  uniformly rectifiability of an Ahlfors-regular set $E$ can be
characterized by the property that $E$ admits, for each
$\eta>0$, a corona decomposition with respect to
Lipschitz graphs in the class $\mathcal{E}=\mathcal{E}^{Lip}_\eta$ where $\mathcal{E}^{Lip}_\eta$ denotes the class of Lipschitz graphs with Lipschitz
 constant no larger
than  $\eta>0$.  The phrase that  `$E$ has big pieces of $\mathcal{E}$' means that $E$ has a
uniformly  `large amount' of coincidence with a set from $\mathcal{E}$,
at every location (point on $E$) and at every scale. This big pieces functor can be iterated (see Definition \ref{bigpiecesdef.def}) and it is natural, in light of \cite{CDM}, to ask if there is a $j \in \mathbb{N}$ such that every UR set is $\BP^j(\LG)$ where this notation means iterating the big pieces $(\BP)$ functor $j$ times and $\LG$ is the collection of Lipschitz graphs (with uniform control on the Lipschitz constants). In fact, while it was proved  by Hrycak (unpublished), that not every UR set is $\BPLG=\BP^1(\LG)$, in  \cite{DS2} it is proved that UR sets are $\BP^2(\LG)$, when $n \ge 2d +1$. More recently, in   \cite{AS} J. Azzam and R. Schul proved,
via the characterization of UR sets
by big pieces of bi-Lipschitz images, that every UR set is $\BP^2(\LG)$.

The purpose of this paper is to give a  far reaching generalization of these results and to prove that
``coronizations imply big pieces squared"
is a generic property. To give a first statement of our main result we consider, as we do  throughout the paper,  a fixed triple $(X,\dist,\mu)$  where $(X,\dist)$ is a metric space and $\mu$ is a Borel regular measure. To limit the number of parameters introduced in definitions and theorems we will for simplicity and consistently assume
that  $\diam(X) = \infty$: this assumption
is not essential (see Remark \ref{rlocal} below). We also fix a `dimension' $d \in (0,\infty)$. While  $(X,\dist,\mu)$ is fixed,
all constants appearing in our results will be independent of the particular metric measure space $(X,\dist,\mu)$ (while of course depending on the quantitative
parameters describing the space, e.g., the dimension $d$, the $d$-regularity constants, etc.). The following theorem, of which the precise statement  can be found at the beginning of Section \ref{proofCoBP.sect}, is  our main result.

\begin{theorem}\label{CoronaBP.thrm}
Let $E \subset X$ be a $d$-regular set with respect to the measure $\mu$ (see Definition \ref{regdef.def}). Suppose that $\mathcal{E}$ is a collection of closed subsets of $X$ each of which is $d$-regular with respect to the measure $\mu$ (with uniform bounds on the regularity constant). If $E$ admits a coronization with respect to $\mathcal{E}$ (see Definition \ref{coronadef.def}) then $E$ has big pieces squared of $\mathcal{E}$ (see Definition \ref{bigpiecesdef.def}).
\end{theorem}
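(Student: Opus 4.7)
The overall strategy is to construct, for each ball $B(x_0, r_0)$ centered on $E$, an intermediate closed $d$-regular set $F$ satisfying (i) $\mu(E \cap F \cap B(x_0, r_0)) \gtrsim r_0^d$, and (ii) $F$ itself has big pieces of $\mathcal{E}$. The coronization hypothesis supplies the raw material: inside any dyadic top cube $Q_0 \subset E$ with $x_0 \in Q_0$ and $\ell(Q_0) \sim r_0$, the cubes below $Q_0$ are partitioned into stopping-time regions $\mathbf{S}$ (each with top cube $Q(\mathbf{S})$ and approximating set $\Sigma_{\mathbf{S}} \in \mathcal{E}$), modulo a Carleson-packing family $\mathcal{B}$ of bad cubes.

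The first step is to extract a good subset $G \subset Q_0$ of nearly full measure on which the corona structure is controlled. By the Carleson packing of $\mathcal{B}$ and of the tops $\{Q(\mathbf{S})\}$, a standard $L^1$ argument yields, for any $\delta > 0$, a threshold $N = N(\delta)$ such that the set of $y \in Q_0$ belonging to more than $N$ bad cubes or crossing more than $N$ tops below $Q_0$ has measure at most $\delta \mu(Q_0)$. On the complement $G$, every point has a controlled ancestry through at most $N$ stopping-time regions, giving access at each dyadic scale $s \leq r_0$ to a stopping-time region $\mathbf{S}(y,s)$ whose $\Sigma_{\mathbf{S}(y,s)}$ approximates $E$ near $y$ at that scale.

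Next, I construct $F$ as a sawtooth-style union of pieces of the approximating sets. Mimicking Azzam--Schul \cite{AS}, take $F := G \cup \bigcup_{\mathbf{S}} \bigl(\Sigma_{\mathbf{S}} \cap W_{\mathbf{S}}\bigr)$, where $W_{\mathbf{S}}$ is a Whitney-type region associated to $\mathbf{S}$, roughly the portion of $Q(\mathbf{S})$ not covered by deeper stopping regions or bad cubes. Uniform $d$-regularity of the $\Sigma_{\mathbf{S}}$ together with the Carleson packing of the tops yields $d$-regularity of $F$, while Hausdorff closeness of $\Sigma_{\mathbf{S}}$ to $E$ inside every cube of $\mathbf{S}$ (combined with $\mu(G) \geq (1-\delta)\mu(Q_0)$) gives the measure lower bound $\mu(E \cap F \cap B(x_0, r_0)) \gtrsim r_0^d$. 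To verify that $F$ has big pieces of $\mathcal{E}$: given $B(z,s)$ with $z \in F$ and $s < \diam F$, locate a stopping-time region $\mathbf{S}'$ whose Whitney region carries the ancestor of $z$ at scale $\sim s$; then $\Sigma_{\mathbf{S}'} \cap W_{\mathbf{S}'} \subset F \cap \Sigma_{\mathbf{S}'}$ contributes measure $\gtrsim s^d$ inside $B(z,s)$, which is precisely the big-pieces property with witness $\Sigma_{\mathbf{S}'} \in \mathcal{E}$.

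I expect the principal obstacle to be carrying this $\BP(\mathcal{E})$ verification through the abstract metric setting without any graphical structure: in \cite{DS2} and \cite{AS} one exploits Lipschitz or planar features of the $\Sigma_{\mathbf{S}}$, whereas here everything must be done with the bare bones of $d$-regularity, Carleson packing, and abstract (Christ--Hyt\"onen--Kairema type) dyadic cubes. Defining the Whitney regions $W_{\mathbf{S}}$ so that they positively partition $G$, respect the corona across scales, and meet the $\Sigma_{\mathbf{S}}$ in measurably large pieces, will require careful combinatorial bookkeeping; one must also check that Hausdorff closeness of $\Sigma_{\mathbf{S}}$ to $E$ translates into actual measure overlap of $G$ with $\Sigma_{\mathbf{S}} \cap W_{\mathbf{S}}$ after a suitable nearest-point projection. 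The reason $\BP^2$ (rather than $\BP$) is the correct target is precisely that $F$ can be built to \emph{literally contain} pieces of the $\Sigma_{\mathbf{S}}$'s, so that the measure overlap needed for $\BP(\mathcal{E})$ is automatic --- circumventing the Hausdorff-closeness-to-measure-overlap gap responsible for Hrycak's counterexample.
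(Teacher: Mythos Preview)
Your proposal outlines a plausible strategy but differs substantially from the paper's argument, and has a real gap at the point you yourself flag.

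The paper does \emph{not} build $F$ in a single pass via Whitney-type regions. Instead it runs an induction on the Carleson constant $a$, using the Hofmann--Martell extrapolation lemma (Lemma~\ref{extraplem.lem}): given $\mut(\dd_{Q_0})\le (a+b)\mu(Q_0)$, one extracts a pairwise disjoint family $\mathcal{F}=\{Q_j\}\subset\dd_{Q_0}$ such that the \emph{restricted} Carleson norm $\|\mut_{\mathcal{F}}\|_{\C(Q_0)}$ is small (in fact $\le 1/2$), while most of the mass of $Q_0$ sits either above the $Q_j$ or in ``good'' $Q_j$ for which the induction hypothesis $H(a)$ applies to a child. The small restricted norm forces every cube in $\dd_{\mathcal{F},Q_0}$ to lie in a \emph{single} stopping-time regime $\sbf_0$ (Lemma~\ref{impob2.lem}). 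One then sets $F_{Q_0}=\Gamma_{\sbf_0}(Q_0)\cup\bigcup_j F_{Q'_j}$, where the $F_{Q'_j}$ come from the induction hypothesis on well-separated children of the good $Q_j$. Regularity and the $\BP(\mathcal{E})$ property are checked directly from the separation and from the fact that the ``skeleton'' $\Gamma_{\sbf_0}(Q_0)$ is a localized piece of a set in $\mathcal{E}$.

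The mechanism you are missing is Lemma~\ref{impob1.lem}/\ref{impobcor.lem}: if a point $x\in Q_0$ is never stopped (i.e.\ $x\in A:=Q_0\setminus\bigcup_j Q_j$), then the infinite chain of cubes containing $x$ all lie in $\sbf_0$, and since $\dist(x,\Gamma_{\sbf_0})<\eta\ell(Q_k)\to 0$ along this chain, $x$ \emph{literally lies on} $\Gamma_{\sbf_0}$. This is what converts ``Hausdorff closeness'' into genuine containment with no projection, and it is exactly the step where your sketch is vague (``after a suitable nearest-point projection'': in a bare metric space there is no such projection, and your set $G$ need not meet any $\Sigma_{\sbf}$ at all). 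Relatedly, your Whitney regions $W_{\sbf}$ are described as subsets of $Q(\sbf)\subset E$, so $\Sigma_{\sbf}\cap W_{\sbf}$ may be empty; even if you meant neighborhoods in $X$, you have not indicated how to arrange them so that the union $\bigcup_{\sbf}(\Sigma_{\sbf}\cap W_{\sbf})$ is $d$-regular across the full range of scales, which in the paper is handled cleanly by the induction and the $80C_0$-separation of the chosen $Q'_j$.
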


We emphasize that although the formulation of Theorem \ref{CoronaBP.thrm}
does not require any particular quantitative restriction on the class $\mathcal{E}$, in
typical applications, the class $\mathcal{E}$ is subject to  some specified
quantitative control, and in this case the theorem says that the big pieces squared approximation is
obtained with respect to sets having the same (uniform) quantitative control.

In the classical Euclidean setting discussed above  $\mathcal{E}=\mathcal{E}^{Lip}_\eta$ and given that a $d$-regular
set $E\subset \rn, d<n$, has a corona decomposition with
respect to $\mathcal{E}^{Lip}_\eta$, we deduce from our Theorem \ref{CoronaBP.thrm}
that $E$ is approximable in the big pieces squared sense by Lipschitz graphs in the class
$\mathcal{E}^{Lip}_\eta$, for the specified $\eta>0$. In particular, based on characterization of UR sets in \cite{DS1},
we recover the result of \cite{AS} concerning big pieces squared approximability of uniformly
rectifiable sets by Lipschitz graphs.

An  alternate proof  of the result of  J. Azzam and R. Schul \cite{AS} in the case $d = n-1$,
based on corona-type constructions, was given by the first and third author in \cite{BH1}. While  Theorem \ref{CoronaBP.thrm}
applies in far more
general settings beyond the setting of UR sets in Euclidean spaces, a
consequence of Theorem \ref{CoronaBP.thrm}, and the characterization of UR sets by
coronizations with respect to Lipschitz graphs (see \cite{DS1}),  is that we here
provide a `corona analysis' type of proof of the result of J. Azzam and R. Schul \cite{AS} for $d < n$. However, it should  be noted that in their work
 \cite{AS} J. Azzam and R. Schul  also establish several other results beyond
the fact that  UR sets are $\BP^2(\LG)$.   Their work has been further
expanded upon by G. C. David and Schul \cite{GCDS}.

Another use of Theorem \ref{CoronaBP.thrm} is that it allows easy passage from a coronization to general `geometric lemmas' \cite{Jones,DS1,DS2}. It is a general fact that in the present setting (general) geometric lemmas are stable under the `big pieces functor' (in particular when applying it twice!). This big piece stability is just a matter of carefully checking that the proofs of David and Semmes \cite{DS2} and Rigot \cite{Rigot} adapt to our setting. Using Theorem \ref{CoronaBP.thrm} we can prove the following theorem and we refer to the bulk of the paper for  definitions of the geometric lemmas stated in the theorem.

\begin{theorem}\label{CoronaGLtrans.thrm}
Let $E \subset X$ be a $d$-regular set with respect to the measure $\mu$. Suppose that $\mathcal{E}$ is a collection of closed subsets of $X$ each of which is $d$-regular with respect to the measure $\mu$ with uniform bounds
on the regularity constant,
and that $\mathcal{A}$ is a collection of subsets of $X$ (not necessarily $d$-regular).
Suppose $E$ admits a coronization with respect to $\mathcal{E}$. Then the following implications hold:
\begin{itemize}
    \item If $p\in (0,\infty)$,  $q \in (0, \infty]$ satisfy
    \[\frac{1}{q} - \frac{1}{p} + \frac{1}{d} > 0,\]
     and if every $\widetilde{E} \in \mathcal{E}$ satisfies the $(p,q)$-geometric lemma with respect to $\mathcal{A}$, with uniform control on the Carleson measure constant, then $E$ satisfies the $(p,q)$-geometric lemma with respect to $\mathcal{A}$.
    \item If every $\widetilde{E} \in \mathcal{E}$ satisfies the weak geometric lemma with parameter $\epsilon$ with respect to $\mathcal{A}$, with uniform control on the Carleson set constant, then $E$ satisfies the weak geometric
    lemma with parameter $C\epsilon$ with respect to $\mathcal{A}$. Here $C$ depends only on dimension and the $d$-regularity constants.
    \item If every $\widetilde{E} \in \mathcal{E}$ satisfies the bilateral weak geometric lemma with parameter $\epsilon$ with respect to $\mathcal{A}$,  with uniform control on the Carleson set constant, then $E$ satisfies the bilateral weak geometric lemma with parameter $C\epsilon$ with respect to $\mathcal{A}$. Here $C$ depends only on dimension and the $d$-regularity constants.
\end{itemize}
\end{theorem}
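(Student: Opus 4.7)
The plan is to combine Theorem~\ref{CoronaBP.thrm} with the fact, alluded to in the remarks preceding the statement, that each of the three geometric lemmas is preserved under the big pieces functor. In particular, I would proceed in two main steps.

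First, I would invoke Theorem~\ref{CoronaBP.thrm} directly: since $E$ admits a coronization with respect to $\mathcal{E}$, we obtain that $E$ has big pieces squared of $\mathcal{E}$. This reduces the theorem to the following auxiliary claim: if $F$ is a $d$-regular set with big pieces of a uniformly $d$-regular family $\mathcal{F}$ whose members satisfy any one of the three geometric lemmas (with uniform constants), then $F$ satisfies the same geometric lemma (with constants depending only on the big pieces and regularity parameters). Once this big-pieces stability is established, applying it twice — first to pass from $\mathcal{E}$ to $\BP(\mathcal{E})$, and then from $\BP(\mathcal{E})$ to $\BP^2(\mathcal{E})\ni E$ — yields the theorem, with $C$ in the weak and bilateral weak cases depending only on dimension and the $d$-regularity constants via the two iterations.

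The proof of the auxiliary claim proceeds by a standard cube-by-cube transfer. Fix a Christ-type cube $Q\subset F$ at scale $r$; by the big pieces property one can find $\widetilde F\in\mathcal{F}$ with $\mu(\widetilde F\cap Q)\gtrsim \mu(Q)$. One then compares the relevant $\beta$-quantity (or "bad" predicate) on $Q\subset F$ with the corresponding quantity on a nearby cube of $\widetilde F$, with an error term involving only $Q\setminus\widetilde F$. For the weak and bilateral weak geometric lemmas, both the pulled-back bad set from $\widetilde F$ and the complements $Q\setminus \widetilde F$ (as $Q$ ranges over a generation of cubes) form Carleson sets — the latter by the familiar stopping-time argument that makes ``big pieces'' a Carleson condition — so $F$ inherits a weak/bilateral weak geometric lemma with a constant $C\epsilon$ where $C$ absorbs the big pieces parameter and the regularity constants. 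For the $(p,q)$ geometric lemma the argument is more delicate: one must control the $L^p(d\mu)$-norm of $\beta_q$ on $F$ by the corresponding norm on the approximating pieces $\widetilde F$, and this is exactly where the hypothesis $1/q-1/p+1/d>0$ enters, through a Sobolev-type self-improvement step in the spirit of \cite{DS2} (Part~IV) and \cite{Rigot}.

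The main technical obstacle will be verifying that the arguments of \cite{DS2} and \cite{Rigot}, originally written in the Euclidean setting, go through for an arbitrary metric measure space $(X,\dist,\mu)$ with a Christ cube decomposition of $E$. The transfer of the weak and bilateral weak lemmas is essentially a Carleson packing argument and should go through with only notational changes. The $(p,q)$-case is less routine: one has to check that the interpolation/Sobolev step used to compare $\beta_p$ with $\beta_q$ on different scales uses only $d$-regularity and the cube structure, and in particular does not rely on Euclidean smoothness or linear projections onto approximating planes. Assuming this verification succeeds — and the structural similarity of Christ cubes to dyadic cubes strongly suggests it does — the three bullets of Theorem~\ref{CoronaGLtrans.thrm} follow by applying the big-pieces stability twice to the conclusion of Theorem~\ref{CoronaBP.thrm}.
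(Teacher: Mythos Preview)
Your proposal is correct and follows essentially the same approach as the paper: the paper explicitly states that Theorem~\ref{CoronaGLtrans.thrm} is obtained by combining Theorem~\ref{CoronaBP.thrm} with Propositions~\ref{geolemprop1.prop}--\ref{geolemprop3.prop}, which are precisely the big-pieces stability statements you outline (and whose proofs, given in the appendix, are adaptations of the arguments in \cite{DS2} and \cite{Rigot} to the metric setting, as you anticipated). Your identification of the role of the condition $1/q-1/p+1/d>0$ and of the main technical obstacle --- checking that the David--Semmes/Rigot arguments use only $d$-regularity and the Christ cube structure --- also matches the paper's treatment.
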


We emphasize that all implications stated in Theorem \ref{CoronaGLtrans.thrm} are of a quantitative nature. The reader should also bear in mind that in the context of uniform rectifiability, the
collection $\mathcal{A}$ is the collection of all $d$-dimensional affine spaces. In the general setting of metric spaces there is no such analog, but the structure of the sets $\mathcal{A}$ is not important in the proofs given by David and Semmes \cite{DS2} and Rigot \cite{Rigot}.

While Theorem \ref{CoronaBP.thrm} and Theorem \ref{CoronaGLtrans.thrm} are, by their nature, very
general and of interest in many different contexts, one of our main
motivations is the application of these results in our ongoing project concerning
a parabolic version of parts of  \cite{DS1}, \cite{DS2}, with the goal of
establishing equivalent characterization of parabolic uniformly rectifiable
sets $E\subset\mathbb R^{n+1}$. In \cite{HLN}, \cite{HLN1} the third and fifth author, together with John Lewis,  introduced a  notion of parabolic uniformly rectifiable sets and proved, among other things, the existence of big pieces of regular parabolic Lipschitz graphs under the additional assumption that $E$ is Reifenberg flat in the parabolic sense. These studies were motivated by the study of parabolic or caloric measures in rough domains,
but up to now no systematic and correct study of parabolic uniformly rectifiable sets has
appeared in the literature. It is true that  in \cite{RN1, RN2,RN3},  the author took on the ambitious challenge to develop the theory of
parabolic uniformly rectifiable sets. Unfortunately though,  in \cite{RN1,RN2} the author either gives no proofs of his claims or supplies proofs which have gaps, a few of which we pinpoint in \cite{BHHLN}. In \cite{RN3} the author 
states that the parabolic corona decomposition implies parabolic UR, 
with a proof going through the corresponding `alpha' 
numbers as in \cite{To}. On the other hand, this result is also
a corollary of our Theorem \ref{CoronaGLtrans.thrm} (see 
Theorem \ref{t4.6}), and as our proof is based on an entirely different method,  
we have not
checked in detail the validity of the method claimed in \cite{RN3}. 

In forthcoming papers, 
including  \cite{BHHLN}, along with the present paper,
we conduct a thorough study of parabolic
uniformly rectifiable sets,
and in the context of this paper we note that in \cite{BHHLN} we prove,
among other
things, that parabolic uniformly rectifiable sets (see Definition \ref{p-UR}), satisfy a corona decomposition
with respect to {\em regular} Lip(1,1/2) graphs (see Definition \ref{defgpg}).   Such graphs are the natural
parabolic analogues of Lipschitz graphs,  from the point of view of both
singular integral theory, and PDE/potential theory (see \cite{H1,H,HL,LM,KW}).
In the present paper, we obtain a converse to this  result from  \cite{BHHLN},
as we prove that corona decomposition
with respect to {\em regular} Lip(1,1/2) graphs implies parabolic uniformly rectifiability.
This converse
is a rather  straightforward consequence of the general results established in this paper;
we refer to Section \ref{parabolic.sect}
for details, see
in particular Theorem \ref{t4.6} (i).
In combination,
the present paper and
 \cite{BHHLN} show that, just as in the elliptic
 setting \cite{DS1},  we can characterize
 parabolic uniform rectifiability in terms of the existence
of a corona decomposition with
respect to an appropriate family of graphs ({\em regular} Lip(1,1/2) graphs). 
We further obtain that all sufficiently ``nice" parabolic singular integral operators are $L^2$ bounded on a parabolic uniformly rectifiable set; see Theorem \ref{t4.16} and Corollary \ref{c4.17} below.

The rest of the paper is organized as follows. Section \ref{Prelim} is of preliminary nature.  Theorem \ref{CoronaBP.thrm} is proved in Section \ref{proofCoBP.sect} and the proof is based on an induction argument. Theorem \ref{CoronaGLtrans.thrm} is a consequence of Theorem 1.1, and
Propositions \ref{geolemprop1.prop}, \ref{geolemprop2.prop} and \ref{geolemprop3.prop} which
establish stability of
various `geometric lemmas' in this general setting, and are stated in the bulk of the paper. In fact, minor modifications
aside, the proofs of the three propositions follow almost exactly the corresponding
proofs in \cite{DS2,Rigot}. In this sense we
claim little originality in this part and we therefore
postpone  the proofs (or perhaps rather the confirmations of the validity) of  Propositions \ref{geolemprop1.prop}, \ref{geolemprop2.prop} and \ref{geolemprop3.prop} to an appendix at the end of the paper, Appendix \ref{geolem.app}. However, these proposition are used in Section \ref{parabolic.sect}
where we detail and prove our applications to parabolic uniform rectifiability and we note that Propositions \ref{geolemprop1.prop}, \ref{geolemprop2.prop} and \ref{geolemprop3.prop} have previously not appeared in the literature in the context of  parabolic uniform rectifiability.

\section{Preliminaries}\label{Prelim}
Recall $(X,\dist,\mu)$ and $d$ introduced in the introduction.  In the sequel,
$B(x,r)$, for $x \in X$ and $r > 0$,
will always denote the usual metric ball defined with respect to $\dist$ and centered at $x$ with radius $r$.

As is customary, we use the letters $c,C$ to denote harmless positive constants, not necessarily
the same at each occurrence, which depend only on dimension ($d$) and the
constants appearing in the hypotheses of theorems/lemmas (which we refer to as the
``allowable parameters'').  In some cases, we shall simply use the letter $C$ to denote one of these fixed
allowable parameters (see, e.g., Definition \ref{regdef.def} below).
We shall also
sometimes write $a\lesssim b$ and $a \approx b$ which mean, respectively,
that $a \leq C b$ and $0< c \leq a/b\leq C$, where the constants $c$ and $C$ are, unless otherwise stated, as above. When a constant is given a numerical subscript (e.g. $C_0$) its value will be fixed.

\begin{definition}[$d$-regularity]\label{regdef.def}
Let $E \subset X$. We say $E$ is $d$-regular (with respect to $\mu$) up to
scale $R_0 \in (0,\infty]$ and with constant $C >1$, written $E \in \Reg(C,R_0)$ if $E$ is closed and
\[C^{-1}r^d \le \mu(B(x,r) \cap E) \le Cr^d, \quad \forall x\in E, r \in (0,R_0). \]
We call the upper bound here the {\bf upper regularity condition} and the lower bound here the {\bf lower regularity condition}.
In the case $R_0 = \infty$ we simply write $E \in \Reg(C)$.
\end{definition}

\subsection{Trading for scales}

The following lemma allows us to localize any $d$-regular set.

\begin{lemma}\label{localregset.lem}
Let $E \in\Reg(C)$. Then for every $x \in E$ and $r> 0$ there exists $E_{x,r} \subset E$ such that $E_{x,r} \in \Reg(2^{6d}10^dC,10r)$ and
\[B(x,r) \cap E \subset E_{x,r} \subset B(x,3r) \cap E.\]
In particular, $\diam(E_{x,r}) \ge C^{-2/d}(r/2)$.
\end{lemma}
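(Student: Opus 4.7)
The plan is to take $E_{x,r} := E \cap \overline{B}(x, 2r)$ and verify each of the three conclusions. The inclusions $B(x,r) \cap E \subset E_{x,r} \subset B(x,3r) \cap E$ follow from $B(x,r) \subset \overline{B}(x,2r) \subset B(x,3r)$. For the diameter bound, I would first establish $\diam(B(x,r) \cap E) \geq C^{-2/d}r$: letting $D$ denote this diameter, the fact that $x \in B(x,r) \cap E$ forces $B(x,r) \cap E \subset \overline{B}(x,D)$, so the upper regularity of $E$ (applied after approximating $\overline{B}(x,D)$ by open balls $B(x, D+\epsilon)$ and invoking continuity of $\mu$) gives $\mu(B(x,r) \cap E) \leq CD^d$; combining with the lower regularity bound $\mu(B(x,r)\cap E) \geq C^{-1}r^d$ yields $D \geq C^{-2/d}r > C^{-2/d}(r/2)$.

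For the $d$-regularity of $E_{x,r}$ up to scale $10r$, the upper bound $\mu(B(y,\rho)\cap E_{x,r}) \leq \mu(B(y,\rho)\cap E) \leq C\rho^d$ is immediate since $E_{x,r}\subset E$. For the lower bound, given $y \in E_{x,r}$ and $\rho \in (0,10r)$, my strategy is to produce $z \in E$ and a radius $\rho' \geq c\rho$ (with $c = c(d) > 0$) such that $B(z, \rho') \subset B(y,\rho) \cap \overline{B}(x,2r)$; the lower regularity of $E$ at $z$ then yields $\mu(B(y,\rho)\cap E_{x,r}) \geq C^{-1}(\rho')^d \geq C^{-1} c^d \rho^d$, a lower bound of the required form.

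I would split the analysis into three regimes. In the \emph{interior regime}, where $\dist(y,x)+\rho \leq 2r$, taking $z=y$ and $\rho'=\rho$ works directly. In the \emph{engulfing regime}, where $\rho \geq 3r$, the triangle inequality (using $\dist(y,x) \leq 2r$) yields $B(y,\rho) \supset B(x,r)$, so I take $z=x$ and $\rho'=r$, which gives a bound with constant proportional to $10^d C$ since $\rho \leq 10r$ forces $r \geq \rho/10$. The remaining \emph{intermediate regime}---$\rho < 3r$ together with $\dist(y,x) > 2r-\rho$, so $y$ lies near the sphere $\{w : \dist(w,x) = 2r\}$---is the main obstacle: a general metric space admits no canonical midpoint between $y$ and $x$, and so one must construct $z \in E$ with $\dist(z,y) \lesssim \rho$ and $\dist(z,x) \leq 2r - c\rho$ through a careful argument exploiting the lower $d$-regularity of $E$ near $y$ together with the presence of $x \in E$ (for example via a short iterative descent through $d$-regular sub-balls moving closer to $x$ at each step). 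The geometric losses accumulated across these regimes---each introducing a factor of the form $2^{kd}$ when shrinking to the relevant sub-ball---are what yield the factor $2^{6d}$ in the final regularity constant $2^{6d}10^d C$.
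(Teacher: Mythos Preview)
Your construction $E_{x,r} = E \cap \overline{B}(x,2r)$ does not work in a general metric space, and the gap in the ``intermediate regime'' cannot be filled by the mechanism you propose. Here is a concrete counterexample in $X=\mathbb{R}^2$ with $d=1$ and $\mu=\mathcal{H}^1$. Let
\[
E \,=\, \{(0,t): t\in\mathbb{R}\}\ \cup\ \{(2+s^2,\,s): s\in\mathbb{R}\},
\]
the $y$-axis together with a rightward-opening parabola with vertex $y=(2,0)$. One checks easily that $E\in\Reg(C)$ for some absolute $C$. Take $x=(0,0)\in E$ and $r=1$. Every point of the parabola other than its vertex satisfies $|(2+s^2,s)|^2 = 4+5s^2+s^4>4$, so
\[
E\cap \overline{B}(x,2r)\,=\,\{(0,t):|t|\le 2\}\ \cup\ \{y\}.
\]
Thus $y$ is an isolated point of your $E_{x,r}$, and for every $\rho<2$ one has $\mu\big(B(y,\rho)\cap E_{x,r}\big)=0$; the lower regularity bound fails outright. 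Your proposed rescue---find $z\in E$ with $\dist(z,y)\lesssim \rho$ and $\dist(z,x)\le 2r-c\rho$ via ``iterative descent''---is impossible here: for small $\rho$, every point of $E\cap B(y,\rho)$ lies on the parabola and hence at distance strictly greater than $2r$ from $x$. Lower $d$-regularity of $E$ near $y$ gives you mass, but says nothing about where that mass sits relative to $x$; the presence of $x\in E$ is information at scale $2r$, not at scale $\rho$. (Shifting the parabola slightly inward shows that even the closure of $E\cap B(x,2r)$ fails to be $d$-regular with a constant depending only on $C$.)

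The paper's construction is designed precisely to avoid this. It sets $A_0 = B(x,r)\cap E$ and then, for $k\ge 1$,
\[
A_k \,=\, \bigcup_{z\in A_{k-1}} B(z,2^{-k}r)\cap E,
\]
and takes $E_{x,r}$ to be the closure of $A=\bigcup_k A_k$. The point is that any $z\in A$ enters at some generation $k_0$ and therefore carries with it a chain $z_0\in A_0,\,z_1\in A_1,\ldots$ with $\dist(z_k,z_{k+1})<2^{-k-1}r$; for any scale $s\in(0,10r)$ one may jump along this chain to a point $z_j$ with $2^{-j}r\approx s$, and the ball $B(z_j,2^{-j-1}r)\cap E$---which lies entirely in $A$ by construction---furnishes the required lower mass. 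In the example above, the paper's $A$ never reaches the parabola (it is at distance $2$ from $A_0$), so the isolated point $y$ is simply excluded.
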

\begin{proof}
The statement about the diameter of $E_{x,r}$ immediately follows from the regularity of $E$ and that $B(x,r) \cap E \subset E_{x,r} \subset B(x,3r) \cap E$. Indeed,
\[\mu(B(x,C^{-2d}r/2) \cap E) \le 2^{-d} \mu(B(x,r) \cap E),\]
since the the right hand side is non-zero this implies there exists $y \in E \cap B(x,r) \setminus B(x,C^{-2d}r/2)$, which immediately gives the diameter estimate.

Now we produce the set $E_{x,r}$. Let $A_0 = B(x,r) \cap E$ and for $k = 1,2,\dots$ we defined $A_k$ inductively by
\[A_k = \bigcup_{z \in A_{k-1}} B(z, 2^{-k} r) \cap E.\]
Set $A = \bigcup_{k \ge 0} A_k$. Obviously $B(x,r) \cap E \subseteq A$.

Let $z \in A$ be fixed. Then $z \in A_{k_0}$ for some $k_0$ and by definition there exists $z_0, z_1, \dots, z_{k_0 -1}$ such that $z_k \in A_k$, $\dist(z_0, x) < r$, $\dist(z_{k_0 - 1}, z) < 2^{-k_0} r$ and
\[\dist(z_k, z_{k +1}) < 2^{-k-1}r.\]
It immediately follows from the triangle inequality that
\[\dist(z,x) < \sum_{k = 0}^\infty 2^{-k}r = 2r,\]
which gives that $A \subseteq B(x,2r) \cap E$.
Let $s \in (0, 2^{-k_0 + 5}r]$. Using that $B(z,2^{-6}s) \cap E \subset B(z, 2^{-k_0-1}r) \cap E \subseteq A$
we have
\begin{align*}
    C^{-1}2^{-6d}s^d \le \mu\big(E \cap B(z,2^{-6}s)\big) &= \mu\big(B(z,2^{-6}s) \cap A\big)\\
    & \le \mu(E \cap B(z,s)) \le Cs^d,
\end{align*}
where we used the $d$-regularity of $E$ in the first and last inequalities.
Now suppose that $s \in [2^{-j - 1}r, 2^{-j}r)$ for some $j \in \{0, 1, \dots k_0 -6\}$.
Then with $\{z_k\}_{k = 0}^{k_0 - 1}$ as above we have that $\dist(z_{k},z) \le 2^{-k}r$, so that
$B(z_{j+5}, 2^{-j-6}r) \cap E \subset B(z,s) \cap A$. Thus,
\begin{align*}
    C^{-1}2^{-6d}s \le C^{-1}2^{(-j - 6)d}r
    &\le \mu( B(z_{j+5}, 2^{-j-6}r) \cap E)\\
 &\le \mu(B(z,s) \cap A) \le Cs^d,
\end{align*}
where we used the regularity of $E$ and that $A \subset E$ in the last line. If $s \in [r,10r)$,
then $s'= s/10 \in (0,r)$, so appealing to the analysis above we obtain
\[C^{-1}10^{-d}s \le \mu(B(z,s/10) \cap A) \le \mu(B(z,s) \cap E) \le Cs^d.\]

This shows that
\[ C^{-1} 10^{-d}2^{-6d}s \le \mu(B(z,s) \cap A) \le Cs^d, \quad \forall z \in A, s \in (0,10r).\]

Notice that no point of $A$ is isolated.  We take $E_{x,r}$ to be the closure of $A$, then since $E$ is closed $E_{x,r} \subseteq E$. If $w \in E_{x,r}$, $\epsilon \in (0,1/2)$ and $s \in (0,10r)$ then there exists $z \in A$ such that $\dist(z,w) \le \epsilon s$ and hence
\begin{multline*}
    C^{-1}10^{-d}2^{-6d}(1 - \epsilon)^ds^d
    \le \mu(A \cap B(z,(1-\epsilon)s))
    \\ \le \mu(E_{x,r} \cap B(w,s))
    \le \mu(E \cap B(w,s)) \le C s^d.
\end{multline*}
which gives that $E_{x,r} \in \Reg(2^{6d}10^{d}C,10r)$. The fact that $B(x,r) \cap E \subset E_{x,r} \subset B(x,3r)$ follows from the analysis above as well.
\end{proof}

\begin{remark}[``Trading For Scales"]\label{trading.rmk}
 In the proof of Lemma \ref{localregset.lem} we used a technique which one might call ``trading for scales", where we sacrifice some portion of a structural constant in order to gain in `scale'. This can also be done with the constants in the big pieces definition (see Definition \ref{bigpiecesdef.def}) and is demonstrated in Lemma \ref{cubesenoughbp.lem}. This idea will be used frequently in the proof of Theorem \ref{CoronaBP.thrm} and, due to the focus on other technical matters, at that time we will use this technique without mentioning it at each occurrence.
\end{remark}

As an example of trading for scales we produce the following lemma, which is applicable to the set constructed in Lemma \ref{localregset.lem}.

\begin{lemma}\label{regscalechange.lem}
Suppose that $E \in \Reg(C,R)$ with $R > \diam E$. Then
$$E \in \Reg(C(R'/R)^d,R')\,,$$ for all $R' > R$.
\end{lemma}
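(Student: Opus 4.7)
The plan is to verify the two-sided regularity inequality at scale $R'$ by splitting into the cases $r \in (0,R)$ and $r \in [R,R')$.

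For $r \in (0,R)$, I would simply note that the hypothesis $E \in \Reg(C,R)$ already gives $C^{-1} r^d \le \mu(B(x,r) \cap E) \le C r^d$, and since $(R'/R)^d > 1$ these bounds are in fact stronger than the ones we need with the inflated constant $C(R'/R)^d$. So this range is automatic.

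The substance of the lemma lies in the range $r \in [R, R')$. Here I would exploit the hypothesis $R > \diam E$, which forces $E \subset B(x,r)$ for every $x \in E$ and every such $r$; consequently $\mu(B(x,r)\cap E) = \mu(E)$, a constant, and the problem reduces to producing two-sided bounds on the total mass of $E$. To obtain these, I would pick any auxiliary radius $r_0 \in (\diam E, R)$ (a nonempty interval, precisely because $\diam E < R$): for such $r_0$ one has $E \subset B(x, r_0)$, and the original regularity gives $C^{-1} r_0^d \le \mu(E) \le C r_0^d$. Letting $r_0 \nearrow R$ then yields
\[C^{-1} R^d \le \mu(E) \le C R^d.\]

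The last step is to check that these bounds on $\mu(E)$ are consistent with the desired regularity at scale $R'$ with constant $C(R'/R)^d$. For the lower bound, I need $(C(R'/R)^d)^{-1} r^d \le C^{-1} R^d$, i.e.\ $r \le R'$, which holds since $r < R'$. For the upper bound, I need $C R^d \le C(R'/R)^d r^d$, i.e.\ $R^2 \le R' r$, which holds since $R \le r$ and $R \le R'$. Combining both cases yields $E \in \Reg(C(R'/R)^d, R')$, as desired.

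There is no real obstacle in the argument; the only point requiring a moment's care is the passage to the limit $r_0 \nearrow R$, which is legitimate because the estimate $C^{-1} r_0^d \le \mu(E) \le C r_0^d$ holds uniformly in a left-neighborhood of $R$. The rest is bookkeeping with the exponents.
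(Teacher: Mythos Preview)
Your proof is correct and follows essentially the same approach as the paper: split into $r\in(0,R)$ and $r\in[R,R')$, and in the latter range use that $r>\diam E$ forces $\mu(B(x,r)\cap E)=\mu(E)$, which is then controlled using regularity at scales just below $R$. The only minor differences are cosmetic: the paper invokes the regularity estimate directly at $r=R$ (implicitly relying on the limit you spell out), and observes that the upper bound in fact holds with the original constant $C$ (since $R\le r$ gives $\mu(E)\le CR^d\le Cr^d$), rather than checking it against the inflated constant as you do.
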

\begin{proof}
If $r \in (0,R)$ then $C^{-1}r^d \le \mu(E \cap B(x,r)) \le Cr^d$ for all $x \in E$. If $r \in [R,R')$ then $r > \diam E$ and hence for $x \in E$
\begin{align*}\mu(E \cap B(x,r)) = \mu(E \cap B(x,R)) &\ge C^{-1}R^d\\
& = C^{-1}(R'/R)^d (R')^d \ge C^{-1}(R'/R)^d r^d.
\end{align*}
Additionally, for $r \in [R,R')$ and $x \in E$ it holds $$\mu(E \cap B(x,r)) = \mu(E \cap B(x,R)) \le CR^{d} \le Cr^d.$$ These estimates give the lemma.
\end{proof}

\subsection{Dyadic notation}

\begin{lemma}\label{lemmaCh}\textup{({\bf Existence and properties of the ``dyadic grid''})
\cite{DS1,DS2}, \cite{Ch}, \cite{hyt-k}.}
Suppose that $E \in \Reg(C)$ Then there exist
constants $ a_0>0,\, \gamma>0$ and $C_1<\infty$, depending only on $d$ and $C$, such that for each $k \in \mathbb{Z},$
there is a collection of pairwise disjoint Borel sets (``cubes'')
$$
\mathbb{D}_k:=\{Q_{j}^k\subset E: j\in \mathfrak{I}_k\},$$ where
$\mathfrak{I}_k \subseteq \mathbb{N}$ denotes some index set depending on $k$, satisfying

\begin{list}{$(\theenumi)$}{\usecounter{enumi}\leftmargin=.8cm
\labelwidth=.8cm\itemsep=0.2cm\topsep=.1cm
\renewcommand{\theenumi}{\roman{enumi}}}

\item $E=\cup_{j}Q_{j}^k\,\,$ for each
$k\in{\mathbb Z}$.

\item If $m\geq k$ then either $Q_{i}^{m}\subset Q_{j}^{k}$ or
$Q_{i}^{m}\cap Q_{j}^{k}=\emptyset$.

\item For each $(j,k)$ and each $m<k$, there is a unique
$i$ such that $Q_{j}^k\subset Q_{i}^m$.

\item $\diam\big(Q_{j}^k\big)\leq C_1 2^{-k}$.

\item Each $Q_{j}^k$ contains some ``surface ball'' $\Delta \big(x^k_{j},a_02^{-k}\big):=
B\big(x^k_{j},a_02^{-k}\big)\cap E$.
\end{list}
\end{lemma}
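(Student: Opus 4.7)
The plan is to run the Christ/David--Semmes dyadic grid construction, adapted from the Euclidean setting. One produces a sequence of maximal $\delta$-separated nets on $E$, one per scale, assembles them into a tree via a parent map, and then glues Voronoi-like cells at fine scales into cubes indexed by the tree.

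First I would fix a small $a_0 \in (0, 1/100)$ and, for each $k \in \mathbb{Z}$, select a maximal $2a_0 2^{-k}$-separated subset $\{x_j^k\}_{j \in \mathfrak{I}_k} \subseteq E$. By maximality, the balls $B(x_j^k, 2a_0 2^{-k})$ cover $E$ while $B(x_j^k, a_0 2^{-k})$ are pairwise disjoint; $d$-regularity of $E$ ensures, via a disjoint-ball packing estimate, that $\mathfrak{I}_k$ is countable. A greedy scale-by-scale selection arranges $\{x_j^k\}_j \subseteq \{x_j^{k+1}\}_j$, so coarser centers persist at finer scales. Next I would define a parent map by assigning to each $x_j^{k+1}$ a unique $x_i^k$ with $\dist(x_j^{k+1}, x_i^k) < 2a_0 2^{-k}$ (breaking ties by index; such $i$ exists by the covering property). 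The resulting tree has the property that for any descendant $x_l^m$ of $x_j^k$ (so $m \geq k$), iterating parents and applying the triangle inequality gives $\dist(x_l^m, x_j^k) \leq \sum_{i=k}^{m-1} 2a_0 2^{-i} \leq 4 a_0 2^{-k}$.

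To define the cubes, I would take the Voronoi partition $\{V_j^k\}$ of $E$ at each scale (nearest center, ties broken by index) and twist it by the tree: set $Q_j^k$ to be the union, over $m \geq k$, of those fine-scale cells $V_l^m$ whose center $x_l^m$ descends from $x_j^k$. Properties (i)--(iii) are then immediate from the tree: every $x \in E$ has a unique scale-$k$ ancestor, so $\{Q_j^k\}_j$ partitions $E$; for $m \geq k$ either $x_i^m$ descends from $x_j^k$ (giving $Q_i^m \subseteq Q_j^k$) or it does not (giving disjointness); and (iii) is just the tree ancestor at scale $m < k$. For the diameter bound (iv), any $x \in Q_j^k$ lies in some $V_l^m \subseteq B(x_l^m, 2a_0 2^{-m})$ with $x_l^m$ a descendant of $x_j^k$, and the tree geodesic bound yields $\dist(x, x_j^k) \leq C_1 2^{-k}$ with $C_1 = C_1(a_0)$. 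For the inner surface ball (v), if $y \in B(x_j^k, a_0 2^{-k}) \cap E$, the $2a_0 2^{-k}$-separation of the net forces $\dist(y, x_i^k) > a_0 2^{-k} > \dist(y, x_j^k)$ for all $i \neq j$, so $y \in V_j^k \subseteq Q_j^k$.

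The main obstacle is that Voronoi cells at different scales do not nest, which is precisely why the tree-twisting is necessary. The bookkeeping needed to verify that this twisted construction produces Borel sets that partition $E$ at every scale and nest across scales is the technical core of the argument and is carried out in \cite{Ch, hyt-k}; in the present metric-measure setting nothing changes beyond replacing Lebesgue regularity with $\mu$-regularity for the countability and packing estimates. The constants $a_0$ and $C_1$ produced depend only on $d$ and the $d$-regularity constant of $E$, as required, and a small additional argument (using the disjoint-ball packing at a scale slightly finer than $2^{-k}$) will produce the constant $\gamma$ governing small-boundary estimates if needed.
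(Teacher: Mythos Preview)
The paper does not supply its own proof of this lemma: it is stated as a known result with citations to \cite{DS1,DS2}, \cite{Ch}, and \cite{hyt-k}, followed only by remarks that in a general space of homogeneous type the result is due to Christ (with dyadic parameter $\delta\in(0,1)$, which may be taken to be $1/2$ per \cite{HMMM}). Your sketch of the Christ construction---maximal separated nets, parent map, tree-twisted Voronoi cells---is the standard route and is consistent with what the cited references do; in that sense you have provided more than the paper itself.

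One caution on your cube definition: taking $Q_j^k$ to be the union over \emph{all} $m\geq k$ of the Voronoi cells $V_l^m$ whose centers descend from $x_j^k$ does not by itself yield a partition at scale $k$, since a point $x$ may lie in $V_j^k$ while its nearest scale-$(k{+}1)$ center is not a child of $x_j^k$ (Voronoi cells at consecutive scales need not nest, as you note). The constructions in \cite{Ch,hyt-k} resolve this either by a limiting/labelling procedure or by defining cubes via the tree alone rather than by unioning cells across scales. Since you explicitly defer the bookkeeping to those references, this is not a gap in your proposal so much as a point where your informal description of $Q_j^k$ should not be taken literally.
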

A few remarks are in order concerning this lemma.
\begin{list}{$\bullet$}{\leftmargin=0.4cm  \itemsep=0.2cm}

\item In the setting of a general space of homogeneous type, this lemma was proved by Christ
\cite{Ch}, with the
dyadic parameter $1/2$ replaced by some constant $\delta \in (0,1)$.
In fact, one may always take $\delta = 1/2$ (see  \cite[Proof of Proposition 2.12]{HMMM}).
In the presence of the Ahlfors-David
property, and in Euclidean space the result already appears in \cite{DS1,DS2}.

\item  We shall denote by  $\mathbb{D}=\mathbb{D}(E)$ the collection of all relevant
$Q^k_j$, i.e., \[\mathbb{D} := \cup_{k} \mathbb{D}_k.\]

\item Properties $(iv)$ and $(v)$ imply that for each cube $Q\in\mathbb{D}_k$,
there is a point $x_Q\in E$, a metric ball $B(x_Q,r)$ and a surface ball
$\Delta(x_Q,r):= B(x_Q,r)\cap E$ such that
$r\approx 2^{-k} \approx {\rm diam}(Q)$
and \begin{equation}\label{cube-ball}
\Delta(x_Q,r)\subset Q \subset \Delta(x_Q,Cr),\end{equation}
for some uniform constant $C$.
We shall denote this ball and surface ball by
\begin{equation}\label{cube-ball2}
B_Q:= B(x_Q,r) \,,\qquad\Delta_Q:= \Delta(x_Q,r),\end{equation}
and we shall refer to the point $x_Q$ as the ``center'' of $Q$.

\item For a dyadic cube $Q\in \mathbb{D}_k$, we shall
set $\ell(Q) = 2^{-k}$, and we shall refer to this quantity as the ``length''
of $Q$.  Evidently, $\ell(Q)\approx \diam(Q).$

\item For a dyadic cube $Q \in \mathbb{D}$ and $K > 1$ we define
\[KQ:= \{x \in E: \dist(x,Q) \le (K - 1) \diam(Q)\}.\]
\end{list}

\begin{definition}[Localized Dyadic Grids and Sawtooths]
Let $C > 1$, $E \in \Reg(C)$ and $\mathbb{D} = \mathbb{D}(E)$ as above. For $Q \in \mathbb{D}$ we set $\mathbb{D}_{Q} = \{Q' \in \mathbb{D}: Q'\subseteq Q\}$. If $\mathcal{F} = \{Q_j\}$ is a countable collection of pairwise disjoint cubes in $\mathbb{D}$ then we set $\mathbb{D}_{\mathcal{F}} = \mathcal{D} \setminus \cup_j \mathbb{D}_{Q_j}$. If $Q \in \mathbb{D}$ and  $\mathcal{F} = \{Q_j\}$ is a countable collection of pairwise disjoint cubes in $\mathbb{D}$ then we set
\[\mathbb{D}_{\mathcal{F}, Q} =\mathbb{D}_{Q} \cap \mathbb{D}_{\mathcal{F}}. \]
\end{definition}

\subsection{Carleson measures and decompositions}

\begin{definition}[Discrete Measures and Discrete Carleson Norms]
Suppose $C > 1$, $E \in\Reg(C)$ and $\mathbb{D} = \mathbb{D}(E)$ be as above. Let $\{\alpha_Q\}_{Q \in \mathbb{D}}$, where $\alpha_Q \in [0, \infty)$.
We let $\mut$ be the discrete measure associated to $\{\alpha_Q\}_{Q \in \mathbb{D}}$ be defined by
\[\mut(\mathbb{D}') = \sum_{Q \in \mathbb{D}'} \alpha_Q,\]
for any collection of cubes $\mathbb{D}' \subseteq \mathbb{D}$. If $\mathcal{F} = \{Q_j\}$ is a countable collection of pairwise disjoint cubes in $\mathbb{D}$ we define $\mut_{\mathcal{F}}$ by
\[\mut_{\mathcal{F}}(\mathbb{D}') = \mut(\mathbb{D}' \cap \mathbb{D}_{\mathcal{F}}).\]
If $\mathcal{F} = \{Q_j\}$ is a countable collection of pairwise disjoint cubes in $\mathbb{D}$ we define the global Carleson norm of $\mut_{\mathcal{F}}$ as
\[\|\mut_\mathcal{F}\|_{\mathcal{C}} = \sup_{Q \in \mathbb{D}} \frac{\mut_\mathcal{F}(\mathbb{D}_Q)}{\mu(Q)}\]
and for $Q_0 \in \mathbb{D}$ the localized Carleson norm of $\mut_\mathcal{F}$ (with respect to $Q_0$) as
\[\|\mut_\mathcal{F}\|_{\mathcal{C}(Q_0)} = \sup_{Q \in \mathbb{D}_{Q_0}} \frac{\mut_\mathcal{F}(\mathbb{D}_Q)}{\mu(Q)}.\]
Here if $\mathcal{F}= \emptyset$ we write $\mut$ in place of $\mut_{\mathcal{F}}$ in the notation above.
\end{definition}

An important ingredient in the proof of Theorem \ref{CoronaBP.thrm} is the
following decomposition of a discrete Carleson region.

\begin{lemma}\label{extraplem.lem}{\cite[Lemma 7.2]{HM-I}}
Suppose that $C' > 1$, $E \in \Reg(C')$ and let $\mathbb{D}(E)$ be as above. Suppose that $\mut$ is a discrete measure associated to $\{\alpha_Q\}_{Q \in \mathbb{D}}$. There exists $C$ depending on $d$ and $C'$ such that the following holds. Given $a\geq 0$, $b>0$, and $Q \in \mathbb{D}$ such that
$\mut(\dd_{Q})\leq (a+b)\,\mu(Q)$,
there is a family $\F=\{Q_j\}\subset\dd_{Q}$
of pairwise disjoint cubes such that
\begin{equation} \label{Corona-sawtooth}
\|\mut_\F\|_{\C(Q)}
\leq C b,
\end{equation}
\begin{equation}
\label{Corona-bad-cubes}
\mu(B)
\leq \frac{a+b}{a+2b}\, \mu(Q)\,,
\end{equation}
where $B$ is the union of those $Q_j\in\F$ such that
$\mut\big(\dd_{Q_j}\setminus \{Q_j\}\big)>a\,\mu(Q_j)$.
\end{lemma}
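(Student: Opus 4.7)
My plan is to apply a classical stopping-time selection. I would define $\mathcal{F} = \{Q_j\}$ as the collection of maximal subcubes $Q_j \subsetneq Q$ satisfying $\mut(\mathbb{D}_{Q_j}) > (a+2b)\mu(Q_j)$. Because the hypothesis forces $\mut(\mathbb{D}_Q)/\mu(Q) \leq a+b < a+2b$, the cube $Q$ itself automatically fails the stopping threshold, so the $Q_j$'s will be proper subcubes of $Q$, and they will be pairwise disjoint by maximality.

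The bound \eqref{Corona-bad-cubes} will then be essentially free. Summing the stopping inequality over the disjoint $Q_j$'s and using $\sum_j \mut(\mathbb{D}_{Q_j}) \leq \mut(\mathbb{D}_Q) \leq (a+b)\mu(Q)$ yields
\[
(a+2b)\sum_j \mu(Q_j) \,<\, \sum_j \mut(\mathbb{D}_{Q_j}) \,\leq\, (a+b)\mu(Q),
\]
and since $B \subseteq \bigcup_j Q_j$, the desired estimate on $\mu(B)$ follows after dividing.

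For \eqref{Corona-sawtooth} I would fix any $R \in \mathbb{D}_Q$ and split into two cases. If $R \subseteq Q_j$ for some $j$, then $\mathbb{D}_R \cap \mathbb{D}_\mathcal{F} = \emptyset$ and there is nothing to prove. Otherwise $R \in \mathbb{D}_\mathcal{F}$, and the maximality of $\mathcal{F}$ at every ancestor of $R$ in $\mathbb{D}_Q$ yields $\mut(\mathbb{D}_R) \leq (a+2b)\mu(R)$. I would then use the identity
\[
\mut_\mathcal{F}(\mathbb{D}_R) \,=\, \mut(\mathbb{D}_R) \,-\, \sum_{Q_j \in \mathcal{F},\, Q_j \subseteq R} \mut(\mathbb{D}_{Q_j})
\]
together with the two-sided control on each $\mut(\mathbb{D}_{Q_j})$: the stopping condition gives the lower bound $\mut(\mathbb{D}_{Q_j}) > (a+2b)\mu(Q_j)$, while the fact that the parent $\widehat{Q_j}$ lies in $\mathbb{D}_\mathcal{F}$ gives $\mut(\mathbb{D}_{Q_j}) \leq \mut(\mathbb{D}_{\widehat{Q_j}}) \leq (a+2b)\mu(\widehat{Q_j}) \lesssim (a+2b)\mu(Q_j)$ via the bounded dyadic doubling that comes from $d$-regularity. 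Telescoping these bounds generation by generation should isolate only the ``excess'' of $\mut$ produced at each stopping event, which is the small quantity that one hopes to bound by $Cb\mu(R)$.

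The hard part will be precisely this last telescoping/packing step: the crudest substitution only yields
\[
\mut_\mathcal{F}(\mathbb{D}_R) \,\leq\, (a+2b)\Bigl(\mu(R) - \sum_{Q_j \in \mathcal{F},\, Q_j \subseteq R} \mu(Q_j)\Bigr),
\]
which degenerates to the trivial bound $(a+2b)\mu(R)$ whenever the $Q_j$'s fail to cover a definite proportion of $R$. To replace the factor $(a+2b)$ by a factor proportional to $b$ (with the implicit constant depending only on $d$ and $C'$), I will need to combine the matched upper and lower bounds on each $\mut(\mathbb{D}_{Q_j})$ with the dyadic structure, so that the net contribution to $\mut_\mathcal{F}(\mathbb{D}_R)$ comes only from the ``slack'' between these bounds and packs up to $Cb\mu(R)$.
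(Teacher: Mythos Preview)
The paper does not prove this lemma; it simply quotes \cite[Lemma~7.2]{HM-I}. Your stopping-time family (maximal $Q_j$ with $\mut(\mathbb{D}_{Q_j})>(a+2b)\mu(Q_j)$) does yield \eqref{Corona-bad-cubes} exactly as you argue, but it cannot yield \eqref{Corona-sawtooth}: the difficulty you flag as ``the hard part'' is not a technicality to be cleaned up but a genuine obstruction for this particular $\mathcal{F}$. Take $\alpha_Q=(a+b)\mu(Q)$ and $\alpha_{Q'}=0$ for every $Q'\subsetneq Q$. The hypothesis $\mut(\mathbb{D}_Q)\le(a+b)\mu(Q)$ holds with equality, every proper subcube has $\mut(\mathbb{D}_{Q'})=0$, and since $\mut(\mathbb{D}_Q)=(a+b)\mu(Q)<(a+2b)\mu(Q)$ the top cube $Q$ is not selected either. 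Hence $\mathcal{F}=\emptyset$, $\mut_\mathcal{F}=\mut$, and $\|\mut_\mathcal{F}\|_{\mathcal{C}(Q)}=a+b$, which violates \eqref{Corona-sawtooth} as soon as $a/b$ exceeds any prescribed constant. There are no $Q_j$'s at all in this example, so no telescoping or ``slack between matched bounds'' can rescue the estimate.

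The upshot is that the correct family must be permitted to contain $Q$ itself when the mass of $\mut$ is concentrated at the top; note that the paper's own use of the lemma in Section~\ref{proofCoBP.sect} explicitly treats the possibility $\mathcal{F}=\{Q_0\}$ (Case~2a), which your threshold never produces. In the example above the choice $\mathcal{F}=\{Q\}$ gives $\|\mut_\mathcal{F}\|_{\mathcal{C}(Q)}=0$ and $B=\emptyset$, so both conclusions hold trivially. Thus the stopping criterion itself---not merely the subsequent bookkeeping---has to be different from the one you propose; you should look at the actual construction in \cite{HM-I} rather than attempt to push this one through.
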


\subsection{Corona decompositions and big pieces}

Before we introduce the notion of corona decomposition we need the following definition.

\begin{definition}[\cite{DS2}]\label{d3.11}
Suppose $E$ is $d$-regular with dyadic cubes $\dd(E)$. Let $\sbf\subset \dd(E)$. We say that $\sbf$ is
``coherent" if the following conditions hold:
\begin{itemize}\itemsep=0.1cm

\item[$(a)$] $\sbf$ contains a unique maximal element $Q(\sbf)$ which contains all other elements of $\sbf$ as subsets.

\item[$(b)$] If $Q$  belongs to $\sbf$, and if $Q\subset \widetilde{Q}\subset Q(\sbf)$, then $\widetilde{Q}\in {\bf S}$.

\item[$(c)$] Given a cube $Q\in \sbf$, either all of its children belong to $\sbf$, or none of them do.

\end{itemize}
We say that $\sbf$ is ``semi-coherent'' if only conditions $(a)$ and $(b)$ hold.
\end{definition}

\begin{definition}[Coronizations]\label{coronadef.def} Let $C_*,C_{**} > 1$. Suppose that $E\in \Reg(C_*)$ and let $\mathbb{D}(E)$ be as above. Let $\mathcal{E} \subseteq \Reg(C_{**})$.
Suppose
$0 < \eta$
and $K>  1$. We say that $E$ admits an $(\eta, K)$-coronization with respect to $\mathcal{E}$ if the following holds. There is a disjoint decomposition
$\dd(E) = \G\cup\B$, satisfying the following properties.
\begin{enumerate}
\item  The ``Good"collection $\G$ is further subdivided into
disjoint stopping time regimes $\{\sbf\}_{\sbf \in \mathcal{S}}$, such that each such regime $\sbf \in \mathcal{S}$ is coherent (cf. Definition \ref{d3.11}).

\item The ``Bad" cubes, as well as the maximal cubes $Q(\sbf)$ satisfy a Carleson
packing condition: There exists a constant $C_{\eta,K} \ge 0$ such that
$$\sum_{Q'\subseteq Q, \,Q'\in\B} \mu(Q')
\,\,+\,\sum_{\sbf: Q(\sbf)\subseteq Q}\mu\big(Q(\sbf)\big)\,\leq\, C_{\eta,K}\, \mu(Q)\,,
\quad \forall Q\in \dd(E)\,.$$
\item For each $\sbf$, there exists $\Gamma_{\sbf} \in \mathcal{E}$ for every $Q\in \sbf$,
\begin{equation}\label{eq2.2a}
\sup_{x\in KQ} \dist(x,\Gamma_{\sbf} ) < \eta\,\ell(Q).
\end{equation}
\end{enumerate}
In the sequel, we write $\mathcal{M} = \{Q(\sbf)\}_{\sbf \in \mathcal{S}}$ to denote the set of maximal cubes.
\end{definition}

\begin{definition}[Big Pieces]\label{bigpiecesdef.def}
Suppose that $C, C', C''> 1$, $\theta, \theta' > 0$. Suppose that $\mathcal{E} \subseteq \Reg(C)$.
We say that $E \in \Reg(C')$ has big pieces of $\mathcal{E}$ with constant $\theta$,
written $E \in \BP(\mathcal{E})(\theta, C')$,
if for for every $x \in E$ and $r > 0$ there exists $\Gamma \in \mathcal{E}$ such that
\[\mu\big(\Gamma \cap E \cap B(x,r)\big) \ge \theta r^d.\] We say that $E \in \Reg(C'')$ is in  $\BP(\BP(\mathcal{E})(\theta, C'))(\theta', C'')$, if for every $x \in E$ and $r > 0$ there exists $\Gamma \in \Reg(C') \cap \BP(\mathcal{E})(\theta, C')$ such that
\[ \mu\big(\Gamma \cap E \cap B(x,r)\big) \ge \theta' r^d.\]
\end{definition}

\begin{remark}\label{rlocal}
It is implicit in the preceding definition that $\diam(E)=\infty$.  
We shall work with unbounded sets $E$ (except when utilizing the localization Lemma \ref{localregset.lem}),
for the sake of convenience, but this is a minor matter.
 In Euclidean space, if the property in question holds in particular 
for $d$-planes, then there is a standard procedure to treat the case of bounded sets:  if
$E$ is a bounded set 
satisfying a corona decomposition with respect to some class of sets $\mathcal{E}$, 
and if $d$-planes also enjoy the corona property with respect to the same
class $\mathcal{E}$, then we may consider the set $E_* = E \cup P$, where $P$ is a $d$-plane whose distance to $E$ 
is comparable 
to the diameter of $E$.  Then Theorem \ref{CoronaBP.thrm} 
says that $E_* \in BP^2(\mathcal{E})$, and hence $E$ inherits the $BP^2$ property.
 
More generally, in a metric space setting, where the notion of a linear subspace does not exist, 
one can modify our proofs {\it mutatis mutandis} to the bounded setting.  We leave the details to the interested reader.
\end{remark}

\begin{remark}\label{convention}
Below we will always establish results where the values $$C,C',C'',\theta,\theta'$$ are all controlled by the allowable parameters. For this reason and to ease notation we will often simply write $\BP(\mathcal{E})$ in place of $\BP(\mathcal{E})(\theta, C')$ and $\BP^2(\mathcal{E})$ in place of $\BP(\BP(\mathcal{E})(\theta, C'))(\theta', C'')$. We are quite sure that the reader will appreciate this.
\end{remark}

Trading for scales (see Remark \ref{trading.rmk})  and using the dyadic cube construction allows us to
check the big pieces condition only on dyadic cubes.

\begin{lemma}[Big pieces on cubes is big pieces]\label{cubesenoughbp.lem}
Let $\theta > 0$. Let $E \in \Reg(M)$ and $\mathcal{E} \subset \Reg(L)$ for some $M, L > 0$. There is a constant $c = c(d, M)$, such that
 if for every cube $Q \in \mathbb{D}(E)$, there exists $\Gamma \in \mathcal{E}$, with
\[\mu(Q \cap \Gamma) \ge \theta\mu(Q)\,,\]
then $E \in \BP(\mathcal{E})(c\theta,M)$. Conversely, there is a constant $c' = c'(M,d)$,
such that if $E \in \BP(\mathcal{E})(\theta,M)$,
then for every cube $Q \in \mathbb{D}(E)$, there
exists $\Gamma \in \mathcal{E}$ such that
\[\mu(Q \cap \Gamma) \ge c'\theta\mu(Q).\]
\end{lemma}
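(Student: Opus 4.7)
The plan is to exploit the tight correspondence between metric balls and dyadic cubes provided by \eqref{cube-ball} and Lemma \ref{lemmaCh}. Roughly, every ball contains a dyadic cube of comparable size and every cube is sandwiched between two surface balls of comparable radii, so the ``big pieces'' property stated for balls and the one stated for cubes are equivalent up to adjustment of $\theta$. Both directions are therefore routine once we set up the dictionary between balls and cubes; the only thing to check is that the dimensional factors and the regularity constants combine into a universal constant $c$ (resp.\ $c'$) depending only on $d$ and $M$.

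For the forward direction, given $x\in E$ and $r>0$, I would pick the smallest integer $k$ with $2^{-k}\le r/(2C_1)$ and let $Q\in \mathbb{D}_k$ be the unique dyadic cube containing $x$. Property $(iv)$ of Lemma \ref{lemmaCh} gives $\diam(Q)\le C_1 2^{-k}\le r/2$, so $Q\subset B(x,r)\cap E$; on the other hand, property $(v)$ together with $d$-regularity yields $\mu(Q)\ge M^{-1}(a_0 2^{-k})^d\gtrsim r^d$. Applying the hypothesis to $Q$ produces $\Gamma\in \mathcal{E}$ with
\[
\mu\bigl(\Gamma\cap E\cap B(x,r)\bigr)\ \ge\ \mu(\Gamma\cap Q)\ \ge\ \theta\,\mu(Q)\ \ge\ c\theta\,r^d,
\]
which is the big pieces condition on balls.

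For the converse, given $Q\in\mathbb{D}(E)$, use \eqref{cube-ball}--\eqref{cube-ball2} to find $x_Q\in E$ and $r\approx \ell(Q)$ with $\Delta(x_Q,r)\subset Q\subset \Delta(x_Q,Cr)$. Applying the $\BP(\mathcal{E})(\theta,M)$ hypothesis to the ball $B(x_Q,r)$ yields $\Gamma\in\mathcal{E}$ with $\mu(\Gamma\cap E\cap B(x_Q,r))\ge \theta r^d$. Since $B(x_Q,r)\cap E\subset Q$, this intersection lies in $Q$; and since $\mu(Q)\le M(Cr)^d$ by the upper regularity applied to the enclosing surface ball, we get $r^d\gtrsim \mu(Q)$, so
\[
\mu(\Gamma\cap Q)\ \ge\ \theta\,r^d\ \ge\ c'\theta\,\mu(Q),
\]
as desired.

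I do not anticipate a serious obstacle here; the proof is essentially bookkeeping of the constants from Lemma \ref{lemmaCh} and of the $d$-regularity constant $M$. The only minor point of care is that in the forward direction one must choose $k$ so that the whole cube $Q$ (not just its center $x$) sits inside $B(x,r)$, which is why the factor $C_1$ from property $(iv)$ enters the constant $c$. This is the prototype of the ``trading for scales" technique alluded to in Remark \ref{trading.rmk}, and the lemma will be invoked repeatedly so that in the proof of Theorem \ref{CoronaBP.thrm} the big pieces condition only needs to be verified on dyadic cubes.
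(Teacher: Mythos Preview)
Your proposal is correct and follows essentially the same approach as the paper's proof: both directions exploit the sandwich \eqref{cube-ball} between surface balls and dyadic cubes, together with $d$-regularity, to pass between the ball and cube formulations of big pieces. Your treatment is in fact slightly more explicit about the choice of generation $k$ and the resulting constants, but the argument is the same.
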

\begin{proof}
Suppose that $Q \in \mathbb{D}(E)$ there exists $\Gamma$ such that $\mu(Q \cap \Gamma) \ge \theta\mu(Q)$. Let $x \in E$ and $r > 0$. Recall the dyadic cubes have the property that $\diam(Q) \approx \ell(Q) \approx \mu(Q)^{1/d}$ and $\cup_{Q \in \mathbb{D}_k} = Q$ with $Q \in D_k$ meaning $\ell(Q) = 2^{-k}$. Then choosing $k$ such that $ k \ll \log_2 r \lesssim  k$ there is a cube $Q \in \mathbb{D}_k$ with $x \in Q$ and $\diam(Q) \approx r$. Thus, $Q \subset B(x,r) \cap E$ and there exists $\Gamma \in \mathcal{E}$ such that \[\mu(B(x,r) \cap \Gamma) \ge \mu(Q \cap \Gamma) \ge \theta \mu(Q) \approx \theta \ell(Q)^d \approx \theta r^d.\]
As the implicit constants above only depend on $d$ and $M$ it follows that $E \in \BP(\mathcal{E})(c\theta, M)$ for some $c = c(d,M) > 0$.

Now suppose that $E \in \BP(\mathcal{E})(\theta, M)$. By the properties of dyadic cubes, there exists $a_0 > 0$ depending only on $d$ and $M$ such that for any cube $Q \in \mathbb{D}(E)$, $B(x_Q, a_0 \ell(Q))\cap E \subseteq Q$. Let $Q \in \mathbb{D}$ then by hypothesis there exists $\Gamma \in \mathbb{E}$ such that
\[\mu(Q \cap \Gamma) \ge \mu(B(x_Q, a_0 \ell(Q))\cap E) \ge \theta (a_0 \ell(Q))^d) \approx \theta \ell(Q)^d \approx \theta \mu(Q).\]
This proves the lemma.
\end{proof}

\subsection{$\beta$-numbers and geometric lemmas}

We now state the definitions of some geometric lemmas. Here the words `geometric lemmas' mean unilateral or bilateral closeness of a regular set to a family of sets (which are not necessarily regular), quantified in terms of a Carleson measure or Carleson set condition.

\begin{definition}[$\beta$-numbers for general sets]
Let $\mathcal{A}$ be an arbitrary collection of (non-empty) sets. Fix $E \in \Reg(C)$. For $q \in (0, \infty)$, $Q\in \mathbb{D}(E)$ we define
\[\beta_{q,\mathcal{A}}(Q):= \inf_{A \in \mathcal{A}} \left\{\mu(Q)^{-1}\int_{2Q} \left[(\diam Q)^{-1} \dist(y, A)\right]^{q} \, d\mu(y) \right\}^{1/q}.  \]
and when $q = \infty$ we define for $Q$ a dyadic cube
\[\beta_{\mathcal{A}}(Q) := \beta_{\infty, \mathcal{A}}(Q)
= \inf_{A \in \mathcal{A}}\left\{ \diam(Q)^{-1} \sup_{y \in 2Q} \dist(y,A) \right\}.\]
\end{definition}

\begin{definition}[$(p,q)$-general geometric lemmas]
 For fixed $p \in (0, \infty)$ and $q \in (0,\infty]$ we say that $E$ satisfies the $(p,q)$-general geometric lemma with respect to $\mathcal{A}$ written $E \in \GLem(\mathcal{A},p, q)$ if there exists $M >0 $ such that
\[\sum_{Q \subseteq R} [\beta_{q,\mathcal{A}}(Q)]^p \mu(Q) \le M\mu(R), \quad  R \in \mathbb{D}(E).\]
At times, we shall want to stress the {\bf Carleson measure constant} $M$ and we then
write $E \in \GLem (\mathcal{A},p,q,M)$.
\end{definition}

\begin{definition}[The weak geometric lemma]
Given $\epsilon > 0$ we say that $E$ satisfies the weak geometric lemma with parameter $\epsilon$ with respect to $\mathcal{A}$, written $E \in \WGLem (\mathcal{A},\epsilon)$ if there exists $M_\epsilon > 0$  such that
\[\sum_{\substack{Q \subseteq R\\ \beta_{\mathcal{A}}(Q) > \epsilon}} \mu(Q) 
\, \le\, M_\epsilon\,\mu(R), \quad  R \in \mathbb{D}(E).\]
At times, we shall
want to stress the {\bf Carleson set constant}, and we then
write $E \in \WGLem (\mathcal{A},\epsilon,M_\epsilon)$.
\end{definition}

\begin{definition}[Bilateral versions and the bilateral weak geometric lemma]\label{BWGL.def}
We also define a bilateral version of $\beta_\infty$ for
any dyadic cube $Q$ as
\begin{equation*}\label{bibetadef.def} b\beta_{\mathcal{A}}(Q) :=
\diam(Q)^{-1}\inf_{A \in \mathcal{A}}\left\{  \sup_{y \in 2Q} \dist(y,A)
\, + \sup_{ z \in A \cap B(x_Q,2\diam(Q))}  \dist(z,E) \right\}\,,
\end{equation*}
where $x_Q$ is the `center' of $Q$, as in  \eqref{cube-ball}.
We say $E$ satisfies the bilateral weak geometric lemma with parameter $\epsilon$ with respect to $\mathcal{A}$, written $E \in \BWGLem (\mathcal{A},\epsilon)$ if there exists $M_\epsilon > 0$ such that
\[\sum_{\substack{Q \subseteq R\\ b\beta_{\mathcal{A}}(Q) > \epsilon}} \mu(Q) \,\le \, 
\, M_\epsilon \,\mu(R), \quad  R \in \mathbb{D}(E).\]
We shall write $E \in \BWGLem (A,\epsilon,M_\epsilon)$
when we want to stress the {\bf Carleson set constant}.
\end{definition}

\begin{remark} The `dilation parameter' 2 in the definitions of $\beta$ and $b\beta$
could be replaced by any
$\kappa \geq 2$, i.e., with $\kappa Q,\, \kappa \diam(Q)$ in place of $2Q, \, 2\diam(Q)$.
\end{remark}

\begin{remark}\label{empty.rmk}
Concerning the definition of $b\beta(Q)$,  if the set
 \begin{equation}\label{eq2.21}
     A \cap B(x_Q,2\diam(Q)) = \emptyset\,,
\end{equation}
with $x_Q$ as in  \eqref{cube-ball},
then we set $\sup_{ z \in A \cap B(x_Q,2\diam(Q)) } \dist(z,E) = 0$. This is not a problem in applications as $\epsilon$ in the definition of $\BWGLem (A,\epsilon,M_\epsilon)$ is typically (very) small,
and when \eqref{eq2.21} holds, the first term in $b\beta$ is greater than or
equal to $1$. For instance, the membership $E \in \BWGLem (\mathcal{A},\epsilon,M_\epsilon)$ and $E \in \WGLem (\mathcal{A},\epsilon,M_\epsilon)$ hold vacuously for any regular set $E$ whenever $\epsilon > 2$ and $\mathcal{A}$ is any collection of sets such that for every $x \in X$ there exists $A \in \mathcal{A}$ such that $x \in A$ (e.g. $X = \rn$ and $\mathcal{A}$ is the collection of all $d$-dimensional affine spaces).
\end{remark}

\begin{remark}\label{parameters.rmk}
In the literature, the weak and bilateral weak geometric lemmas are often stated
in a `parameterless' manner. In particular, we say the weak geometric lemma holds for $E$ if there is a function $\gamma: (0, 1] \to \re$ such that $E \in \WGLem (\mathcal{A}, \epsilon, \gamma(\epsilon))$,
for every $\epsilon \in (0,1]$.
\end{remark}

\begin{remark}
 We would like to point out that the `choice' of dyadic grid is not important in the definitions of the geometric lemmas, provided one is willing to lose something in the parameters ($\epsilon$ and the Carleson constants). For instance, if $E \in \GLem (\mathcal{A}, p,q,M)$ with respect to some grid $\mathbb{D}$ then $E \in \GLem (\mathcal{A}, p,q,M')$ for any other grid $\widetilde{\mathbb{D}}$, where $M'$ depends only on $M$, $d$, and the regularity of $E$.
\end{remark}

\subsection{Stability of geometric lemmas under the `big piece functor'}
We here   state three propositions concerning the stability of geometric lemmas defined in the previous subsction under the `big piece functor'. The proofs of these propositions can be found in Appendix \ref{geolem.app}.  Concerning the general geometric lemma, weak geometric lemma and bilateral weak geometric lemma the following hold.
\begin{proposition}\label{geolemprop1.prop}
Let $\mathcal{A}$ be a collection of subsets of $X$. Let $C_*> 1$, $\theta, M > 0$ and $p \in (0, \infty)$, $q \in (0,\infty]$ satisfy
\[\frac{1}{q} - \frac{1}{p} + \frac{1}{d} > 0.\]
Suppose that $E \in \Reg(C_*)$
and that $$\mathcal{E} \subset \Reg(C_*) \cap \GLem (\mathcal{A},p, q,M).$$
If $E \in \BP(\mathcal{E})(\theta, C)$ then $E \in \GLem (\mathcal{A},p, q,M')$, where $M'$ depends on $C_*, \theta, M, p, q$ and dimension.
\end{proposition}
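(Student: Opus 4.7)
The plan is to establish the Carleson packing bound
$$\sum_{Q \subseteq R} \beta_{q,\mathcal{A}}^E(Q)^p\, \mu(Q) \,\le\, M'\,\mu(R)$$
for arbitrary $R \in \mathbb{D}(E)$ via a stopping-time decomposition driven by the big-pieces hypothesis, a pointwise comparison between the $\beta$-numbers on $E$ and on an approximating set $\Gamma \in \mathcal{E}$, and a scale-summation step that uses the Sobolev-type exponent condition $\tfrac{1}{q} - \tfrac{1}{p} + \tfrac{1}{d} > 0$.

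First I use Lemma \ref{cubesenoughbp.lem} to upgrade the big-pieces hypothesis to its cube form: for every $Q \in \mathbb{D}(E)$ there exists $\Gamma_Q \in \mathcal{E}$ with $\mu(Q \cap \Gamma_Q) \ge \theta_0\mu(Q)$, where $\theta_0$ depends only on the allowable parameters. Fix $R$ and set $\Gamma := \Gamma_R$. For a small $\delta \in (0, \theta_0/2)$, let $\mathcal{F} = \{Q_j\}$ be the family of maximal dyadic subcubes of $R$ with $\mu(Q_j \cap \Gamma) < \delta\mu(Q_j)$. A short volume count using $\mu(R \cap \Gamma) \ge \theta_0\mu(R)$ and $\mu\big((\cup_j Q_j) \cap \Gamma\big) \le \delta\mu(R)$ yields $\sum_j\mu(Q_j) \le (1 - \theta_0 + \delta)\mu(R) \le (1 - \eta)\mu(R)$ with $\eta>0$, so once the packing bound is proved restricted to $\mathbb{D}_{\mathcal{F}, R}$ with a constant depending only on the allowable data, iterating on each $Q_j$ yields the full estimate by a geometric series.

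For each $Q \in \mathbb{D}_{\mathcal{F}, R}$, the inequality $\mu(Q \cap \Gamma) \ge \delta\mu(Q)$ lets me locate a dyadic cube $\widetilde{Q} \in \mathbb{D}(\Gamma)$ with $\ell(\widetilde{Q}) \approx \ell(Q)$ lying in a bounded dilate of $Q$ (with bounded overlap as $Q$ varies). Picking $A = A(\widetilde{Q}) \in \mathcal{A}$ nearly optimal for $\beta^\Gamma_{q,\mathcal{A}}(\widetilde{Q})$ and applying the 1-Lipschitz bound $\dist(y, A) \le \dist(y, \Gamma) + \dist(\pi(y), A)$ for $y \in 2Q \cap E$ (with $\pi(y)$ a nearest point on $\Gamma$), together with the observation that any $y \in 2Q \cap E \setminus \Gamma$ lies in a stopping cube $Q_j$ whose parent meets $\Gamma$, and hence satisfies $\dist(y, \Gamma) \lesssim \ell(Q_j)$, produces
\begin{equation*}
\beta^E_{q,\mathcal{A}}(Q)^q \,\lesssim\, \beta^\Gamma_{q,\mathcal{A}}(\widetilde{Q})^q \,+\, \ell(Q)^{-q}\mu(Q)^{-1}\sum_{Q_j \cap 2Q \ne \emptyset} \ell(Q_j)^q\mu(Q_j).
\end{equation*}
Raising to the $p/q$ power and summing, the $\beta^\Gamma$ contribution is bounded by $CM\mu(R)$ via $\Gamma \in \GLem(\mathcal{A}, p, q, M)$ and the bounded overlap of the $\widetilde{Q}$.

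The main obstacle is summing the error contribution. When $p \le q$ I use subadditivity of $t \mapsto t^{p/q}$ to separate the $Q_j$'s directly; when $p > q$ I combine Hölder's inequality of exponent $p/q$ with the Ahlfors-regular estimate $\sum_{Q_j \cap 2Q \ne \emptyset}\mu(Q_j) \lesssim \mu(Q)$. Either way, after exchanging the order of summation the estimate reduces to bounding sums of the form
$$\sum_j \ell(Q_j)^p\mu(Q_j)^{p/q} \sum_{\substack{Q \in \mathbb{D}_{\mathcal{F}, R} \\ Q \cap 2Q_j \ne \emptyset}} \ell(Q)^{-p}\mu(Q)^{1 - p/q}.$$
Using $\mu(Q) \approx \ell(Q)^d$, the inner sum has scale-behaviour $\ell(Q)^{-p + d(1-p/q)}$ over scales $\ell(Q) \ge \ell(Q_j)$; the exponent is negative precisely when $\tfrac{1}{p} < \tfrac{1}{q} + \tfrac{1}{d}$, i.e.\ under the hypothesis, and in that case the geometric series converges with total $\lesssim \ell(Q_j)^{-p + d(1-p/q)}$. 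Substituting back, each $j$-term collapses to a constant multiple of $\mu(Q_j)$, so the total error is at most $C\sum_j\mu(Q_j) \le C\mu(R)$. Combining with the $\beta^\Gamma$ bound and iterating over $\mathcal{F}$ closes the estimate with $M'$ depending only on $C_*, \theta, M, p, q$, and $d$.
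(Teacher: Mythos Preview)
Your approach is genuinely different from the paper's and, if the gaps were filled, would be more elementary.  The paper follows David--Semmes: it invokes a John--Nirenberg-type lemma (Lemma~\ref{ds1.12JN.eq}) rather than a stopping-time iteration, and it controls the pointwise comparison $\beta^E_q(Q) \lesssim \tilde\beta_q(\widetilde Q) + I_q(Q)$ via the ``multiplicity function'' argument of \cite[Lemma~IV.1.20]{DS2} (Lemma~\ref{geoclaim1.cl} here), where $I_q(Q)$ is an average of $\dist(\cdot,\widetilde E)$ over $2Q$, shown to satisfy its own Carleson packing estimate (Lemma~\ref{DSlem1.37}).  Your stopping-time scheme, by contrast, identifies for every $y\in Q_j$ a nearby \emph{piece} $Q_j^*\cap\Gamma$ of $\Gamma$ with $\mu(Q_j^*\cap\Gamma)\gtrsim \mu(Q_j)$, which in principle permits a simpler averaging argument in place of the multiplicity function.

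However, your key pointwise inequality is not justified as written.  From $\dist(y,A)\le \dist(y,\Gamma)+\dist(\pi(y),A)$ you must still control $\mu(Q)^{-1}\int_{2Q}\dist(\pi(y),A)^q\,d\mu(y)$ by $\beta^{\Gamma}_{q}(\widetilde Q)^q$; a naive change of variables fails because the pushforward $\pi_*(\mu|_E)$ need not be dominated by $\mu|_\Gamma$.  The correct fix, which your stopping time does make available, is to replace $\dist(\pi(y),A)^q$ for $y\in Q_j$ by the average $\mu(Q_j^*\cap\Gamma)^{-1}\int_{Q_j^*\cap\Gamma}\dist(z,A)^q\,d\mu(z)$ (plus $\ell(Q_j)^q$), and then use the bounded overlap of the parents $Q_j^*$ inside $E$ to sum.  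This works, but it is precisely the crux of the comparison and you have skipped it.  Second, the assertion that ``any $y\in 2Q\cap E\setminus\Gamma$ lies in a stopping cube $Q_j$'' is false for $y\in 2Q\setminus R$, and the estimate $\sum_{Q_j\cap 2Q\ne\emptyset}\mu(Q_j)\lesssim \mu(Q)$ fails when $\ell(Q_j)\gg\ell(Q)$; both effects produce an additive $O(1)$ error for each such $Q$, and summing $\mu(Q)$ over all $Q\subseteq R$ diverges.  These defects are repairable (truncate $\ell(Q_j)$ at $\ell(Q)$ and show the residual coarse contribution itself obeys a Carleson bound), but as written the argument has genuine gaps at exactly the places the David--Semmes machinery was built to handle.
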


\begin{proposition}\label{geolemprop2.prop}
Let $\mathcal{A}$ be a collection of subsets of $X$. Let $C_* > 1$, $\epsilon, \theta, M> 0$. Suppose that
$E \in \Reg(C_*)$ and $$\mathcal{E} \subset \Reg(C_*) \cap \WGLem (\mathcal{A},\epsilon,M).$$ If
$E \in \BP(\mathcal{E})(\theta, C_*)$ then $E \in \WGLem (\mathcal{A},C\epsilon,M')$, where $C$ depends only on dimension and $C_*$ and $M'$ depends on $C_*$, $\epsilon, \theta, M> 0$ and dimension.
\end{proposition}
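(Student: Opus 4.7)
The plan is a corona/stopping-time argument combined with an induction on scale (via truncation). Fix $R\in\mathbb{D}(E)$; by Lemma \ref{cubesenoughbp.lem} I pick a single $\Gamma=\Gamma_R\in\mathcal{E}$ with $\mu(\Gamma\cap R)\ge c\theta\,\mu(R)$. I then build a stopping-time family $\mathcal{F}(R)$ consisting of the maximal $F\subsetneq R$ satisfying either (a) $\mu(\Gamma\cap F)<\eta\,\mu(F)$ for small $\eta<c\theta$, or (b) there is $y\in 2F\cap E$ with $\dist(y,\Gamma)>\tau\,\diam(F)$, for $\tau=c_0\epsilon$ with a small constant $c_0=c_0(d,C_*)$.

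On a non-stopped cube $Q\in\mathbb{D}_{\mathcal{F}(R),R}$ both conditions fail. The failure of (a) lets me locate $z_Q\in\Gamma\cap Q$ and then a $\mathbb{D}(\Gamma)$-cube $Q^*\ni z_Q$ with $\ell(Q^*)\approx\ell(Q)$. The failure of (b) gives, for every $y\in 2Q\cap E$, some $z(y)\in\Gamma$ with $\dist(y,z(y))\le\tau\diam(Q)$, and by the triangle inequality $z(y)$ lies in a bounded dilate $\kappa Q^*$ of $Q^*$. Invoking the remark after Definition \ref{BWGL.def} that allows the dilation parameter in $\beta$ to be any $\kappa\ge 2$, I obtain
\[\beta_{\mathcal{A},E}(Q)\ \le\ \tau\ +\ C'\,\beta_{\mathcal{A},\Gamma}(Q^*),\]
for a constant $C'=C'(d,C_*)$. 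Setting $C:=C'+c_0+1$, the hypothesis $\beta_{\mathcal{A},E}(Q)>C\epsilon$ forces $\beta_{\mathcal{A},\Gamma}(Q^*)>\epsilon$. The WGL for $\Gamma$, together with the bounded multiplicity of the assignment $Q\mapsto Q^*$ (coming from $d$-regularity), then gives
\[\sum_{\substack{Q\in\mathbb{D}_{\mathcal{F}(R),R}\\ \beta_{\mathcal{A},E}(Q)>C\epsilon}}\mu(Q)\ \le\ C_1 M\,\mu(R).\]

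For the stopping-time packing $\sum_{F\in\mathcal{F}(R)}\mu(F)\le\alpha\mu(R)$ with $\alpha<1$: the (a)-cubes contribute at most $\tfrac{1-c\theta}{1-\eta}\mu(R)$ via the lower bound on $\mu(\Gamma\cap R)$. For the (b)-only cubes, maximality pins $\dist(y_F,\Gamma)\in(\tau\diam(F),\,2\tau\diam(F)]$, so overlapping balls $B(y_F,\tau\diam(F)/2)\subset(E\setminus\Gamma)\cap 2R$ have comparable radii and hence bounded overlap; coupling this with $\mu((E\setminus\Gamma)\cap R)\le(1-c\theta)\mu(R)$ and $d$-regularity yields $\sum_{F\in\mathcal{F}_b'}\mu(F)\lesssim(1-c\theta)\tau^{-d}\mu(R)$. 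After tuning $\eta$ and $\tau$ against $\theta$ (trading for scales as in Remark \ref{trading.rmk} when necessary, or iterating the big-pieces hypothesis to boost the constant close to $1$), I secure $\alpha<1$. Setting $f_N(R):=\sum_{Q\subseteq R,\,\ell(Q)\ge 2^{-N},\,\beta_{\mathcal{A},E}(Q)>C\epsilon}\mu(Q)$, the above give
\[f_N(R)\ \le\ C_1 M\,\mu(R)+\sum_{F\in\mathcal{F}(R)}f_N(F)\ \le\ C_1 M\,\mu(R)+\alpha\,M'_N\,\mu(R),\]
and writing $M'_N:=\sup_R f_N(R)/\mu(R)<\infty$ (by the trivial truncated bound), taking the supremum and rearranging yields $M'_N\le C_1 M/(1-\alpha)$ uniformly in $N$; sending $N\to\infty$ completes the proof.

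The hard part is the packing of the (b)-only stopped cubes and, simultaneously, the calibration of $\eta$ and $\tau$ against $\theta$ and $\epsilon$ so that $\alpha<1$ while $C$ depends only on $d$ and $C_*$ (in particular, not on $\theta$ or $\epsilon$). The crude Vitali-overlap bound $(1-c\theta)\tau^{-d}$ works cleanly when $c\theta$ is close to $1$; for general $\theta$ one first iterates the big-pieces hypothesis inside $R\setminus\Gamma$ to replace a single $\Gamma$ with a finite family covering a proportion $1-\delta$ of $R$ for arbitrarily small $\delta$, before running the stopping time above.
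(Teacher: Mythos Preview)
There is a genuine gap in the packing of your (b)-only stopped cubes. Your Vitali estimate gives at best $\sum_{F\in\mathcal F_b'}\mu(F)\lesssim (1-c\theta)\,\tau^{-d}\mu(R)$, and with $\tau=c_0\epsilon$ and $c_0=c_0(d,C_*)$ fixed this blows up like $\epsilon^{-d}$ for small $\epsilon$, so $\alpha<1$ fails. The proposed repairs are incompatible with the dependency claimed for $C$. If you let $\tau$ depend on $\theta$ (or iterate big pieces until the uncovered mass $\delta$ satisfies $\delta<\tau^d$), then the comparison $\beta_{\mathcal A,E}(Q)\le\tau+C'\beta_{\mathcal A,\Gamma}(Q^*)$ forces $C\ge C'+\tau/\epsilon$, so $C$ acquires dependence on $\theta$ and/or $\epsilon$, contrary to the statement. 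The multi-$\Gamma_j$ variant has its own obstruction: on a non-stopped $Q$ each $y\in 2Q$ is $\tau$-close to \emph{some} $\Gamma_{j(y)}$, but different $y$ may select different $j$, so there is no single cube $Q^*\in\mathbb D(\Gamma_j)$ against which to compare $\beta_{\mathcal A,E}(Q)$.

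The paper's route (which follows \cite{DS2}) sidesteps this by never seeking $\alpha<1$. For each $R$ it keeps all $Q\subseteq R$ meeting $\widetilde E$, uses the pointwise bound $\beta(Q)\le C_u\big[\tilde\beta(\widetilde Q)+I_\infty(Q)\big]$ of Lemma~\ref{geoclaim2.cl}, and packs $\{Q:I_\infty(Q)>\epsilon\}$ via Lemma~\ref{DSlem1.37} with a constant depending on $\epsilon$; that dependence is absorbed into $M'$, where it is permitted. Integrating over $x\in R\cap\widetilde E$ and applying Chebyshev produces a subset of measure $\gtrsim\theta\,\mu(R)$ on which the vertical count $\sum_{Q\ni x,\,\beta(Q)>C\epsilon}1$ is bounded, and the John--Nirenberg lemma (Lemma~\ref{ds1.12JN.eq}) then bootstraps to the full Carleson bound. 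Your induction framework can be salvaged by dropping condition~(b), keeping only~(a), and on the non-stopped cubes (which now merely meet $\Gamma$) invoking Lemmas~\ref{geoclaim2.cl} and~\ref{DSlem1.37}; then $\alpha=\frac{1-c\theta}{1-\eta}<1$ and your recursion closes with $M'\le (C_1M+C_\epsilon)/(1-\alpha)$ --- but at that point the essential content coincides with the paper's.
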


\begin{proposition}\label{geolemprop3.prop}
Let $\mathcal{A}$ be a collection of subsets of $X$. Let $C_* > 1$, $\epsilon, \theta, M> 0$.
Suppose that $E \in \Reg(C_*)$ and $$\mathcal{E} \subset \Reg(C_*) \cap \BWGLem (\mathcal{A},\epsilon,M).$$ If
$E \in \BP(\mathcal{E})(\theta, C_*)$  then $E \in \BWGLem (\mathcal{A},C\epsilon,M')$, where $C$ depends only on dimension and $C_*$ and $M'$ depends on $C_*$, $\epsilon, \theta, M> 0$ and dimension.
\end{proposition}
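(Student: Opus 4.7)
The plan is to follow the standard David--Semmes/Rigot template, modified to handle the two-sided control intrinsic to $b\beta$. Fix a reference cube $R \in \mathbb{D}(E)$; it suffices to bound
\[\sum_{\substack{Q \in \mathbb{D}_R \\ b\beta_{\mathcal{A}}(Q) > C\epsilon}} \mu(Q) \,\lesssim\, \mu(R),\]
with the implicit constant depending only on $C_*, \theta, \epsilon, M$ and $d$. I would first apply Lemma \ref{localregset.lem} to localize $E$ to a piece of diameter comparable to $\ell(R)$ (slightly enlarged to accommodate the dilation factor in $b\beta$), and then invoke Lemma \ref{cubesenoughbp.lem} together with the big-pieces hypothesis $E \in \BP(\mathcal{E})(\theta, C_*)$ to extract $\Gamma_R \in \mathcal{E}$ with
\[\mu\bigl(\Gamma_R \cap B(x_R, K\ell(R))\bigr) \,\gtrsim\, \ell(R)^d,\]
for a fixed $K$ chosen large enough to dominate every scale arising in the definition of $b\beta$ for cubes contained in $R$.

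Next, fixing a small auxiliary parameter $\delta \ll \epsilon$ and associating $\alpha_Q = \mu(Q)$ to those $Q \subseteq R$ at which $\Gamma_R$ fails to $\delta$-approximate $E$ bilaterally at scale $\ell(Q)$, I would iterate Lemma \ref{extraplem.lem}, reapplying the big-pieces hypothesis at each stopping cube, to produce a family $\mathcal{F} = \{Q_j\}$ of maximal stopping cubes satisfying
\[\sum_j \mu(Q_j) \,\lesssim\, \mu(R).\]
The stopping criterion is that either $\sup_{y \in 2Q_j} \dist(y, \Gamma_R) > \delta\, \ell(Q_j)$, or $\sup_{z \in \Gamma_R \cap B(x_{Q_j}, 2\diam Q_j)} \dist(z, E) > \delta\, \ell(Q_j)$, or the overlap of $\Gamma_R$ with $E$ at the scale of $Q_j$ has fallen below a fixed threshold; each condition contributes one component of the auxiliary density, and all three are packed by the extrapolation mechanism.

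For any cube $Q \in \mathbb{D}_{\mathcal{F}, R}$, the sets $E$ and $\Gamma_R$ are bilaterally $\delta$-close on $B(x_Q, 2\diam Q)$. A pigeonhole argument using the $d$-regularity of $\Gamma_R$ then produces a companion cube $Q' \in \mathbb{D}(\Gamma_R)$ with $\ell(Q') \approx \ell(Q)$ and $x_{Q'}$ within a bounded multiple of $\ell(Q)$ of $x_Q$ such that any $A \in \mathcal{A}$ bilaterally approximating $\Gamma_R$ on $2Q'$ automatically bilaterally approximates $E$ on $2Q$; consequently
\[b\beta_{\mathcal{A}}(Q) \,\le\, C\bigl( b\beta^{\Gamma_R}_{\mathcal{A}}(Q') + \delta \bigr),\]
where $b\beta^{\Gamma_R}_{\mathcal{A}}$ denotes the bilateral $\beta$-number computed with $\Gamma_R$ in place of $E$. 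Choosing $C$ large enough that $C\epsilon > 2C\delta$, every $Q$ contributing to the sum forces $b\beta^{\Gamma_R}_{\mathcal{A}}(Q') > \epsilon$. Since each $Q'$ has only boundedly many preimages $Q$, the BWGL hypothesis on $\Gamma_R$ applied inside $B(x_R, K\ell(R))$ delivers the desired Carleson packing for the good cubes.

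The main obstacle lies in the bilateral nature of $b\beta$. The one-sided argument (for $\WGLem$) needs only that points of $E$ lie close to $\mathcal{A}$, whereas here one must additionally transfer the fact that points of $\mathcal{A}$ sit close to $\Gamma_R$ into the fact that they sit close to $E$, and this reverse proximity must hold uniformly on $B(x_Q, 2\diam Q)$. This is precisely what the second and third stopping-time conditions are designed to enforce. Once the packing of the stopping cubes is secured by the extrapolation lemma, the comparison of $\beta$-numbers on $E$ and $\Gamma_R$ reduces the remaining sum to the BWGL on $\Gamma_R$, and the argument concludes.
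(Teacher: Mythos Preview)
Your sketch has a structural gap. You invoke Lemma~\ref{extraplem.lem} to pack the stopping cubes $\{Q_j\}$, but that lemma takes as \emph{input} a Carleson bound $\mut(\mathbb{D}_{Q_0}) \le (a+b)\mu(Q_0)$ and outputs a refined decomposition; it does not manufacture a Carleson estimate from nothing. With your $\alpha_Q$ marking cubes where bilateral $\delta$-approximation by $\Gamma_R$ fails, the bound $\sum_j \mu(Q_j) \lesssim \mu(R)$ is precisely what must be proved, so the appeal to Lemma~\ref{extraplem.lem} is circular. The fallback plan of ``reapplying the big-pieces hypothesis at each stopping cube'' does not close the gap either: even if the first generation of stopping cubes packed with some constant $M_0$, that constant depends on $\delta,\epsilon,C_*$ and is generically much larger than $1$, so after $k$ iterations the accumulated mass is of order $M_0^k\mu(R)$, which diverges. (Contrast this with the induction in the proof of Theorem~\ref{CoronaBP.thrm}, where the coronization supplies a fixed a priori Carleson bound $C_{\eta,K}$ and the induction runs through only finitely many steps of size $b$.)

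The paper's proof avoids both stopping times and iteration. Fixing $R$ and the big-pieces set $\widetilde{E}$, one proves a pointwise comparison (Lemma~\ref{geoclaim3.cl}): for every $Q\subseteq R$ with $Q\cap\widetilde{E}\neq\emptyset$,
\[b\beta(Q) \,\le\, C_b\bigl(b\tilde{\beta}(\widetilde{Q}) + I_\infty(Q) + \widetilde{I}_\infty(\widetilde{Q})\bigr),\]
where $\widetilde{Q}\in\mathbb{D}(\widetilde{E})$ is a companion cube and $I_\infty,\widetilde{I}_\infty$ measure the mutual distance between $E$ and $\widetilde{E}$ near $Q$. The two distance terms pack by a direct computation (Lemma~\ref{DSlem1.37}), and $b\tilde{\beta}$ packs by the BWGL hypothesis on $\widetilde{E}$. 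Summing only over cubes that meet $\widetilde{E}$ gives
\[\int_{R\cap\widetilde{E}}\, \sum_{\substack{Q\ni x,\,Q\subseteq R\\ b\beta(Q)>C_b\epsilon}} 1 \,\, d\mu(x) \,\lesssim\, \mu(R),\]
and since $\mu(R\cap\widetilde{E}) \ge c\theta\,\mu(R)$, Chebyshev plus the John--Nirenberg-type Lemma~\ref{ds1.12JN.eq} upgrade this to the full Carleson packing on all of $R$. That John--Nirenberg step is the mechanism your outline is missing: it is what converts control on a subset of $R$ of fixed positive density into a global Carleson bound, with no iteration required.
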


\begin{remark}\label{r2.31}
We remark that in the statements of the preceding propositions, there is no loss of generality
to assume that the $d$-regularity constants for $E$ and $\mathcal{E}$ are the same,
as in general we may simply take the larger of the two.   This same remark applies in the sequel.
\end{remark}

\section{Proof of Theorem \ref{CoronaBP.thrm}}\label{proofCoBP.sect}
In this section we prove Theorem \ref{CoronaBP.thrm}. Using the notation introduced in the previous section the following is  the precise formulation of  Theorem \ref{CoronaBP.thrm} and this is the statement that  we will prove.
We observe that Remark \ref{r2.31} applies here.

\smallskip

\noindent {\it Let $C_*> 1$. Suppose that $E\in \Reg(C_*)$ and let $\mathbb{D}=\mathbb{D}(E)$ be the set of dyadic cubes as Lemma \ref{lemmaCh}.
Assume that $\mathcal{E} \subseteq \Reg(C_{*})$ and that $E$ admits an $(\eta, K)$-coronization (see Definition \ref{coronadef.def})
 with respect to $\mathcal{E}$ for some $\eta > 0$ and $K > 1$. Then $E \in \BP^2(\mathcal{E})$ with constants depending on $C_*, d, \eta, K$ and $C_{\eta,K}$ (the constant in Definition \ref{coronadef.def}). }

\smallskip

From this point forward we assume that $E\in \Reg(C_*)$, $\mathcal{E} \subseteq \Reg(C_{*})$ and that $E$ admits an
$(\eta, K)$-coronization
with respect to $\mathcal{E}$ for some $\eta > 0$ and $K > 1$. We define
\begin{equation}\label{eq4.0}
\alpha_Q:= 
\begin{cases} \mu(Q)\,,&{\rm if}\,\, Q\in \M\cup\B, \\
0\,,& {\rm otherwise}.\end{cases}
\end{equation}
and we let $\mut$ be the discrete measure with respect to $\{\alpha_Q\}_{Q \in \mathbb{D}}$. Note that by assumption
\begin{equation} \|\mut\|_{\C} \le C_{\eta,K}. \end{equation}

Let $Q \in \sbf$ for some $\sbf \in \mathcal{S}$ and let $\Gamma_{\sbf}$ be the set in $\mathcal{E}$ supplied by the coronization. Let $X_Q \in \Gamma_{\sbf}$ be  such that
$\dist(X_Q,x_Q) < \eta \ell(Q)$, where
lower case  $x_Q$ is the `center' of $Q$ as in \eqref{cube-ball}.
  Let $\Gamma_{\sbf}(Q) := (\Gamma_{\sbf})_{X_Q, C_0 \ell(Q)}$
be the $d$-regular localization of $\Gamma_{\sbf}$ as in Lemma \ref{localregset.lem}, where,
by the triangle inequality and \eqref{eq2.2a},  we can choose $C_0 \gtrsim_{d, C_*} (K + \eta)$ such that  $\Gamma_{\sbf}(Q)$
satisfies
\begin{equation}\label{localstillclose.eq}
\sup_{x\in KQ'} \dist(x,\Gamma_{\sbf}(Q) ) < \eta\,\ell(Q'), \quad \forall Q' \subseteq Q, Q \in \sbf\end{equation}
and
\[\Gamma_{\sbf}(Q) \subset B(x_Q, 5C_0\ell(Q)).\]
Here we use that $\Gamma_{\sbf} \cap B(X_Q, C_0\ell(Q)) \subseteq \Gamma_{\sbf}(Q)$ and the properties of the dyadic cubes. Recall by construction (see Lemma \ref{localregset.lem}) that $\Gamma_{\sbf}(Q_0)$ is closed, $cC_0 \ell(Q) \le \diam(\Gamma_{\sbf}) \le 3C_0 \ell(Q)$ for a constant $c$ depending only on $d$ and $C_{*}$ and that $\Gamma_{\sbf}(Q) \in \Reg(2^{6d}10^dC_{*},10C_0 \ell(Q))$. (Note that we can always use Lemma \ref{regscalechange.lem}, to prove that $\Gamma_{\sbf}(Q) \in \Reg(2^{6d}10^dM^dC_{*},MC_0 \ell(Q))$ for any fixed $M$ and we will do so below.)

By perhaps taking $C_0$ larger (depending on $d$ and $C_*$) we may also assume that if $Q' \in \mathbb{D}(E)$ and $Q^*$ is the grandparent of $Q'$ then $\diam(Q^*) \le C_0 \ell(Q')$. In particular, with this extra condition on $C_0$ and $Q',Q^*$ as above
\[\dist(x_{Q'}, x_{Q^*}) \le C_0 \ell(Q').\]

\subsection{Preliminary observations}
We here record two important observations as lemmas.

\begin{lemma}\label{impob1.lem}
Fix $x \in E$ and $\sbf \in \mathcal{S}$. If there exists an (infinite) nested sequence of cubes
$Q_0 \supsetneq Q_1 \supsetneq Q_2 \dots$, with $x \in Q_k$ and $Q_k \in \sbf$, then $x \in \Gamma_{\sbf}(Q_0)$.
\end{lemma}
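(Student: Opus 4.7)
The plan is to exploit the closedness and $d$-regularity of the localized set $\Gamma_{\sbf}(Q_0)$ together with the approximation property \eqref{localstillclose.eq}, and simply take a limit along the nested sequence.

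First I would observe that strict nesting $Q_0 \supsetneq Q_1 \supsetneq Q_2 \supsetneq \cdots$ forces $\ell(Q_k) \to 0$: the diameters of dyadic cubes decrease geometrically along strictly nested chains, since each $Q_{k+1}$ lies in a strictly smaller generation than $Q_k$ (this uses properties (ii)--(iv) of Lemma \ref{lemmaCh}). I would also note that since $Q_k \in \sbf$ and $Q_k \subseteq Q_0 \in \sbf$, the hypothesis of \eqref{localstillclose.eq} is in force for each $Q' := Q_k$.

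Next I would apply \eqref{localstillclose.eq} at each scale. For every $k$ we have $x \in Q_k \subseteq KQ_k$, so
\[
\dist\bigl(x, \Gamma_{\sbf}(Q_0)\bigr) \,<\, \eta\,\ell(Q_k).
\]
Sending $k \to \infty$ gives $\dist(x, \Gamma_{\sbf}(Q_0)) = 0$. Since $\Gamma_{\sbf}(Q_0)$ is closed (by the construction in Lemma \ref{localregset.lem}), we conclude $x \in \Gamma_{\sbf}(Q_0)$, as desired.

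There is no real obstacle in this lemma; the only thing one has to be a bit careful about is that \eqref{localstillclose.eq}, as derived just above the lemma, is the statement about the \emph{localized} set $\Gamma_{\sbf}(Q_0)$ (and not merely about $\Gamma_{\sbf}$), which is exactly what allows one to conclude that the limit point lies in $\Gamma_{\sbf}(Q_0)$ rather than only in $\Gamma_{\sbf}$. This is the content of the choice of $C_0$ made earlier, so the lemma is essentially a corollary of that setup.
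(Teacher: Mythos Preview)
Your proof is correct and follows essentially the same approach as the paper: both use strict nesting to force $\ell(Q_k)\to 0$, apply \eqref{localstillclose.eq} at each $Q_k$ to get $\dist(x,\Gamma_{\sbf}(Q_0))<\eta\,\ell(Q_k)$, and then invoke closedness of $\Gamma_{\sbf}(Q_0)$ to conclude. The paper's version is slightly more quantitative (noting $\ell(Q_k)\le 2^{-k}\ell(Q_0)$), but the argument is the same.
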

\begin{proof} The proof of this lemma is simple. Since $Q_{k+1} \subsetneq Q_k$,
it follows that $\ell(Q_k) \le 2^{-k}\ell(Q_0)$. Then \eqref{localstillclose.eq} gives that $\dist(x,\Gamma_{\sbf}(Q_0)) \le 2^{-k}\ell(Q_0)$ for all $k \in \mathbb{N}$. Since $\Gamma_{\sbf}(Q_0)$ is closed,
$x \in \Gamma_{\sbf}(Q_0)$.
\end{proof}

\begin{lemma}\label{impob2.lem}
If $Q_0 \in \mathbb{D}(E)$, $\mathcal{F}$ is a collection of pairwise disjoint subcubes of $Q_0$ and
    \[\|\mut_\F\|_{\C(Q_0)} \le 1/2\]
    then there exist $\sbf \in \mathcal{S}$ such that $Q \in \sbf$ whenever $Q \in \mathbb{D}_{\mathcal{F},Q_0}$.
\end{lemma}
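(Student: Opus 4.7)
The plan is to argue by contradiction using a single key observation: by \eqref{eq4.0}, any $Q \in \M \cup \B$ satisfies $\alpha_Q = \mu(Q)$, so if such a $Q$ were to lie in $\mathbb{D}_{\F,Q_0}$ we would have
\[
\mut_\F(\mathbb{D}_Q) \;\ge\; \alpha_Q \;=\; \mu(Q),
\]
forcing $\|\mut_\F\|_{\C(Q_0)} \ge 1$ and contradicting the hypothesis $\|\mut_\F\|_{\C(Q_0)} \le 1/2$. Consequently $\mathbb{D}_{\F,Q_0} \cap (\M\cup\B) = \emptyset$, so every cube in $\mathbb{D}_{\F,Q_0}$ lies in $\G$ and belongs to a unique stopping-time regime.

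First I would dispose of the vacuous case $Q_0 \in \F$: then every subcube of $Q_0$ sits inside $Q_0 \in \F$, so $\mathbb{D}_{\F,Q_0} = \emptyset$ and any $\sbf$ works. Otherwise $Q_0 \in \mathbb{D}_{\F,Q_0}$, and by the key observation $Q_0 \in \G$; let $\sbf \in \mathcal{S}$ be the unique regime containing $Q_0$. This $\sbf$ will be the regime in the conclusion.

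To finish, fix an arbitrary $Q \in \mathbb{D}_{\F,Q_0}$ and form the chain of dyadic ancestors $Q = R_0 \subsetneq R_1 \subsetneq \dots \subsetneq R_k = Q_0$. Each $R_i$ lies in $\mathbb{D}_{\F,Q_0}$ (any inclusion $R_i \subseteq Q_j \in \F$ would force $Q \subseteq Q_j$, contradicting $Q \in \mathbb{D}_\F$) and therefore in $\G$. Suppose for contradiction that $Q \notin \sbf$ and let $j$ be the largest index with $R_j \notin \sbf$; then $R_{j+1} \in \sbf$ while its child $R_j$ belongs to some distinct regime $\sbf'$. The main (though mild) step is pinning $R_j$ down as a maximal cube of $\sbf'$: if instead $R_j \subsetneq Q(\sbf')$, then since no cube sits strictly between a dyadic parent and its child we must have $R_{j+1} \subseteq Q(\sbf')$, and property (b) of Definition \ref{d3.11} applied to $\sbf'$ would force $R_{j+1} \in \sbf'$, contradicting the disjointness $\sbf \cap \sbf' = \emptyset$. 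Hence $R_j = Q(\sbf') \in \M$, so $R_j \in \mathbb{D}_{\F,Q_0} \cap \M$ violates the key observation. The anticipated main obstacle is precisely this final step: extracting the ``transition'' cube $R_j$ along the chain and using coherence simultaneously in $\sbf$ and $\sbf'$ to identify it as a maximal cube of another regime.
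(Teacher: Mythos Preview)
Your proof is correct and follows essentially the same approach as the paper: both first observe that the Carleson bound forces $\mathbb{D}_{\F,Q_0}\cap(\M\cup\B)=\emptyset$, then derive a contradiction by locating a maximal cube of some other regime inside $\mathbb{D}_{\F,Q_0}$. The only difference is cosmetic: where you walk up the ancestor chain to find a transition cube $R_j$ and then argue $R_j=Q(\sbf')$, the paper skips the chain and directly compares $Q(\sbf')$ with $Q_0$ (either $Q_0\subseteq Q(\sbf')$, forcing $Q_0\in\sbf'$ by coherence, or $Q(\sbf')\subsetneq Q_0$, placing $Q(\sbf')\in\mathbb{D}_{\F,Q_0}\cap\M$).
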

\begin{proof}
This proof is also simple but requires chasing a few definitions. We first note that we can assume that $\mathcal{F} \neq \{Q_0\}$, as otherwise the lemma is vacuously true. For $Q \in \mathbb{D}_{\mathcal{F},Q_0}$ we have \[\alpha_Q/\mu(Q) \le \mut(Q)/\mu(Q) \le \|\mut_\F\|_{\C(Q_0)}  \le 1/2.\]
By definition $\alpha_Q/\mu(Q) \in \{0,1\}$ and hence $Q \in \mathbb{D}_{\mathcal{F},Q_0}$ can never be a maximal or a bad cube.

Let $\sbf_0$ be the stopping time regime such that $Q_0 \in \sbf_0$. Suppose, for the sake of contradiction, that $Q \in \mathbb{D}_{\mathcal{F},Q_0}$ but $Q \not \in \sbf_0$. Since $Q$ is not maximal or bad, it must be the case that $Q\in \sbf$ for some $\sbf \neq \sbf_0$. It can't be the case that $Q_0 \subseteq Q(\sbf)$ (the maximal cube for $\sbf$) as by coherency of the stopping time regimes, $Q_0 \in \sbf$, which would yield a contradiction. On the other hand, if $Q(\sbf) \subset Q_0$ then since $Q \subset Q(\sbf)$ we have $Q(\sbf) \in \mathbb{D}_{\mathcal{F},Q_0}$. This is contradiction to the fact that $\mathbb{D}_{\mathcal{F},Q_0}$ contains no maximal cubes.
\end{proof}

Combining the two lemmas above, we obtain the following.

\begin{lemma}\label{impobcor.lem}
Let $Q_0 \in \mathbb{D}(E)$ and $\mathcal{F} = \{Q_j\}_{j}$ be a collection of pairwise disjoint subcubes of $Q_0$, with
$\mathcal{F} \neq \{Q_0\}$ and
    \[\|\mut_\F\|_{\C(Q_0)} \le 1/2.\]
    Let $\sbf_0$ be the stopping time regime such that $Q_0 \in \sbf_0$, which exists by Lemma \ref{impob2.lem}.
    Set $A = Q_0 \setminus \cup_{j} Q_j$. If $x \in A$ then $x \in \Gamma_{\sbf_0}(Q_0)$.
\end{lemma}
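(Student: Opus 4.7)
The plan is to show, for each $x \in A$, that $x$ lies on an infinite \emph{strictly} descending chain of cubes, all of which belong to the single stopping time regime $\sbf_0$ that contains $Q_0$, and then to invoke Lemma \ref{impob1.lem}. Throughout, I use the fact, established inside the proof of Lemma \ref{impob2.lem}, that every cube in $\mathbb{D}_{\mathcal{F},Q_0}$ lies in the specific regime $\sbf_0$ containing $Q_0$, not merely in some regime.

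\textbf{Construction of the chain.} Let $k_0$ be the generation of $Q_0$, and for each integer $k\ge k_0$ let $Q^{(k)}$ denote the unique cube of $\mathbb{D}_k$ that contains $x$. Property (ii) of the dyadic grid, together with $x\in Q_0$, forces $Q^{(k_0)}=Q_0$ and $Q^{(k+1)}\subseteq Q^{(k)}$ for all $k\ge k_0$. Because $x\in A$ avoids every $Q_j\in\mathcal{F}$, no $Q^{(k)}$ can be contained in any $Q_j$; together with $Q^{(k)}\subseteq Q_0$, this yields $Q^{(k)}\in\mathbb{D}_{\mathcal{F},Q_0}$ for every $k\ge k_0$, and hence $Q^{(k)}\in\sbf_0$ by Lemma \ref{impob2.lem}.

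\textbf{Strictness.} The weak nesting $Q^{(k+1)}\subseteq Q^{(k)}$ might a priori degenerate to equality from some level onward, and this is the one substantive point to check. Suppose $Q^{(k)}=Q^{(k_\ast)}$ for every $k\ge k_\ast$; then property (iv) of the grid forces $\diam Q^{(k_\ast)}\le C_1 2^{-k}\to 0$, so $Q^{(k_\ast)}=\{x\}$, and property (v) produces, for each $k\ge k_\ast$, a surface ball $B(y_k,a_0 2^{-k})\cap E\subseteq Q^{(k)}=\{x\}$ whose center $y_k$ lies in $E\cap Q^{(k)}$ and hence equals $x$. Upper regularity gives $\mu(\{x\})=\mu(B(x,a_0 2^{-k})\cap E)\le C_\ast (a_0 2^{-k})^d\to 0$, contradicting the lower regularity bound $\mu(B(x,a_0 2^{-k})\cap E)\ge C_\ast^{-1}(a_0 2^{-k})^d>0$. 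Thus I can inductively define $\sigma(0)=k_0$ and $\sigma(n+1)=\min\{k>\sigma(n):Q^{(k)}\subsetneq Q^{(\sigma(n))}\}$, producing a strictly descending chain $Q^{(\sigma(0))}\supsetneq Q^{(\sigma(1))}\supsetneq\cdots$ of cubes in $\sbf_0$, each containing $x$.

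\textbf{Conclusion.} Applying Lemma \ref{impob1.lem} to this chain yields $x\in\Gamma_{\sbf_0}(Q_0)$, as required. The argument is essentially a direct splicing of Lemmas \ref{impob1.lem} and \ref{impob2.lem}; the only mildly nontrivial step is the strict-nesting verification, whose only possible obstruction would be that $x$ is isolated in $E$, a situation precluded by the $d$-regularity hypothesis with $d>0$.
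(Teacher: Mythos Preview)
Your proof is correct and follows essentially the same approach as the paper: show that every dyadic cube in $\mathbb{D}_{Q_0}$ containing $x$ lies in $\mathbb{D}_{\mathcal{F},Q_0}$ and hence in $\sbf_0$, then feed the resulting chain into Lemma \ref{impob1.lem}. The paper's version is terser and simply takes $Q_{i+1}$ to be the unique child of $Q_i$ containing $x$, tacitly treating the strict nesting required by Lemma \ref{impob1.lem} as automatic; your explicit verification that the chain cannot stabilize (via the $d$-regularity with $d>0$) is a welcome bit of extra care rather than a departure in strategy.
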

\begin{proof}
If $x \in A$ then, by properties of dyadic cubes for $Q \in \mathbb{D}_{Q_0}$ with $x \in Q$ we
have that $Q$ is not contained in $\mathbb{D}_{Q_j}$ for any $j$ since this would imply that $x \in Q_j$. Thus $Q \in \mathbb{D}_{\mathcal{F}, Q_0}$ and it follows from Lemma \ref{impob2.lem} that $Q \in \sbf_0$. Then let $Q_i$, $i = 0,1,2 \dots$ be such that $Q_{i + 1}$ is the unique subcube of $Q_i$ such that $x \in Q_{i +1}$. Then $x \in Q_i$ and the collection $Q_i$ satisfy the hypothesis of Lemma \ref{impob1.lem} and hence $x \in \Gamma_{\sbf_0}(Q_0)$.
\end{proof}

\subsection{Proof of Theorem \ref{CoronaBP.thrm} by induction}
We are now ready to prove  Theorem \ref{CoronaBP.thrm} and the proof proceeds via induction. We form two statements.

For $a \ge 0$, let $H(a)$ be the following statement: There exists positive constants $c_a, c'_a, C'_a, \theta_a$ such that if
$\mut(\dd_{Q_0}) < a\mu(Q_0)$, then there exists $F_{Q_0}$ with the following properties:
\begin{enumerate}
\item[(i)] $F_{Q_0} \subset B(x_{Q_0}, 20C_0\ell(Q_0))$ and $\diam(F_{Q_0}) \ge c_a \ell(Q_0)$.
\item[(ii)] $F_{Q_0}\in \Reg(C'_a, C_0\ell(Q))$
 and  $F_{Q_0}$ is in $\BP(\mathcal{E})$ up to the scale $C_0\ell(Q_0)$,
that is, for every $x \in F_{Q_0}$ and $r \in (0, C_0\ell(Q))$ there exists
$\Gamma =\Gamma(x,r)\in \mathcal{E}$ such that
\[\mu(\Gamma \cap F_{Q_0} \cap B(x,r)) \ge \theta_a r^d.\]
\item[(iii)] $\mu(F_{Q_0} \cap Q_0) \ge c'_a \mu(Q_0)$.
\end{enumerate}

Notice that this is a local version of what we want to show, and it is important that we have these {\it localized} regions:
we have no (quantitative) bounds on the number of these constructions that will appear in the next step of the induction procedure below!

We also formulate another hypothesis.  This is the hypothesis we would like to verify (to prove  Theorem \ref{CoronaBP.thrm}) and  we will see that there will only be slight changes in the construction and that $H(a)$ is quite useful in proving $H^*(a+b)$ holds.

For $a \ge 0$ let  $H^*(a)$ be the following statement: There exists positive constants $c_a, c'_a, C'_a, \theta_a$ such that If $\mut(\dd_{Q_0}) < a\mu(Q_0)$ then there exists $F_{Q_0}$ with the following properties:
\begin{enumerate}
\item[(I)] $F_{Q_0}\in \Reg(C'_a)$ and  $F_{Q_0} \in \BP(\mathcal{E})(\theta_a, C'_a)$.
Recall this means for every $x \in F_{Q_0}$ and $r \in (0, \infty)$ there exists $\Gamma \in \mathcal{E}$ such that
\[\mu(\Gamma \cap F_{Q_0} \cap B(x,r)) \ge \theta_a r^d.\]
\item[(II)] $\mu(F_{Q_0} \cap Q_0) \ge c'_a  \mu(Q_0)$.
\end{enumerate}

Using Lemma \ref{cubesenoughbp.lem} we see that to prove the theorem it is enough to verify that $H^*(a)$ holds for all
{\Rd $a \in [0, C_{\eta,K} ]$}, with bounds depending only on $a$ and allowable parameters. In particular, we want to prove that $H^*(C_{\eta,K})$ holds as, by the definition of the coronization, $\mut(\mathbb{D}_Q) \le C_{\eta,K} \mu(Q)$ for {\it all} $Q \in \mathbb{D}$.

We first verify that $H(0)$ and $H^*(0)$ hold, which is essentially trivial. Indeed, $\mut(\mathbb{D}_{Q_0}) = 0$ implies that $\|\mut_\F\|_{\C(Q_0)} = 0 \le 1/2$ with $\mathcal{F} = \emptyset$. Then $A = Q_0$ in Lemma \ref{impobcor.lem} and hence $Q_0 \in \sbf_0$ for some $\sbf_0 \in \mathcal{S}$ with $Q_0 \subset \Gamma_{\sbf}(Q_0)$. To verify $H(0)$ we take $F_{Q_0} =\Gamma_{\sbf_0}(Q_0)$ and property (iii) is satisfied with $c'_a = 1$. By construction $\Gamma_{\sbf_0}(Q_0)$ has all the properties necessary in $H(0)$. To verify $H^*(0)$ we take
$F_{Q_0} = \Gamma_{\sbf_0}$. Since $\Gamma_{\sbf_0}(Q_0) \subset \Gamma_{\sbf_0}$ we have that property (II) is
satisfied with $c'_a = 1$ and the other properties hold trivially.

We now fix $b > 0$ depending on $d$ and $C_*$, such that $Cb \le 1/2$, where $C$ is from Lemma \ref{extraplem.lem}. We prove that if $H(a)$ and $H^*(a)$ hold then $H(a + b)$ and $H^*(a + b)$ hold.

Let $Q_0$ be such that $\mut(\mathbb{D}_{Q_0}) \le (a + b)\mu(Q_0)$. We apply Lemma \ref{extraplem.lem} to $Q_0$ to obtain $\mathcal{F} = \{Q_j\}$, a collection of pairwise disjoint subcubes of $Q_0$, with the properties stated in Lemma \ref{extraplem.lem}. An important observation is that by our choice of $b$ we have
\[\|\mut_\F\|_{\C(Q_0)} \le 1/2,\]
and this allows us to utilize Lemmas \ref{impob2.lem} and \ref{impobcor.lem} (in the case $\mathcal{F} \neq \{Q_0\}$).

We define the following objects\footnote{Here and below for notational convenience we take $\mathcal{B}$ and $\mathcal{G}$ to be collections obtained by using Lemma \ref{extraplem.lem}, and not the collections from the corona decomposition. When we wish to describe a cube from the corona decomposition, we will use the words to `good',`bad' or `maximal' cube.}:
$\mathcal{G} := \mathcal{F} \setminus \mathcal{B}$, where $\mathcal{B}$ is from Lemma \ref{extraplem.lem} and
$G = \cup_{Q_j \in \mathcal{G}} Q_j$. Set $\gamma_a =1 - \frac{a + b}{a + 2b} > 0$ and let $A = Q_0 \setminus (\cup_{Q_j \in \mathcal{F}} Q_j)$. By Lemma \ref{extraplem.lem} we have that
$\mu(B) \le (1- \gamma_a) \mu(Q_0)$ so that
\[\mu(A \cup G) = \mu(Q_0 \cap B^c) \ge \gamma_a \mu(Q_0).\]

This allows us to consider two cases.

\noindent{\bf Case 1}: $\mu(A) > (\gamma_a/2)\mu(Q_0)$. In this case, we use Lemma \ref{impobcor.lem} to say that there exists $\sbf_0$, a stopping time regime, such that $Q_0 \in \sbf_0$, and
$x \in A$ implies that $x \in \Gamma_{\sbf_0}(Q_0)$. To verify $H(a)$ (in Case 1) we again take $F_{Q_0}= \Gamma_{\sbf_0}(Q_0)$ and since $A \subset \Gamma_{\sbf_0}(Q_0)$ we have $\mu(F_{Q_0} \cap Q_0) \ge \gamma_a/2 \mu(Q_0)$ so that property (iii) holds. As the other properties ((i) and (ii)) hold by construction,
this takes care of this case for $H(a)$. To verify $H^*(a)$ (in Case 1) we take $F_{Q_0} = \Gamma_{\sbf_0}$.

\noindent
{\bf Case 2}: $\mu(G) \ge (\gamma_a/2)\mu(Q_0)$. We decompose this case further.

\noindent{\bf Case2a}: $\mathcal{F} = \{Q_0\}$.
 In this case, $\mut(\mathbb{D}_{Q_0} \setminus \{Q_0\}) \le a \mu(Q_0)$ (otherwise $B = Q_0$ which violates the second property in Lemma \ref{extraplem.lem}). By definition of $\mut$ (and pigeon-holing) there exists $Q'_0$ a child of $Q_0$ for which $\mut(Q'_0) \le a\mu(Q'_0)$. In this case we may apply the induction hypothesis\footnote{Here we apply $H(a)$ when trying to prove $H(a + b)$ and $H^*(a)$ when we are trying to prove $H^*(a + b)$ .} to $Q'_0$. Upon allowing the constants to get ``worse'', accommodating for the fact that $\ell(Q'_0) = (1/2) \ell(Q_0)$ we have that $H(a +b)$ and $H^*(a +b)$ hold in this case.

\noindent
{\bf Case2b}:
$\mathcal{F} \neq \{Q_0\}$. By definition of $\mathcal{G}$, for every $Q_j \in \mathcal{G}$ there exists $\widetilde{Q}_j$, a child of $Q_j$, for which it holds that $\mut(\widetilde{Q}_j) \le a\mu(\widetilde{Q}_j)$. Thus,
we can apply the induction hypothesis to any $\widetilde{Q}_j \in \widetilde{\mathcal{G}}$,
the collection of all children of the cubes in $\mathcal{G}$ satisfying $\mut(\widetilde{Q}_j) \le a\mu(\widetilde{Q}_j)$.
Before we do that, we need to work with a collection of separated $\widetilde{Q}_j$ so that later we can maintain the upper regularity when we combine the sets $F_{\widetilde{Q_j}}$ from the induction hypothesis. Using a standard covering lemma argument, we extract from $\widetilde{\mathcal{G}}$ a {\it finite}\footnote{Aside from finiteness, we do not control the cardinality. This is used because we will take a union of closed sets and we need to know this union is also closed.} collection of cubes $\mathcal{G}' = \{Q'_j\} \subseteq \widetilde{\mathcal{G}}$ such that
\[\mu(\cup_{\mathcal{G}'} Q'_j) \gtrsim \mu(\cup_{\mathcal{G}} Q_j) =   \mu(G) \gtrsim (\gamma_a/2)\mu(Q_0),\]
where the implicit constants depend on $d$ and $C_*$,
and
\[\dist(Q'_j, Q'_k) \ge 80C_0\max\{\ell(Q'_j), \ell(Q'_k)\}, \quad \forall Q'_j, Q'_k \in \mathcal{G}' (j \neq k).\]

\begin{remark}\label{wehavegoodparents.rmk} Recall that since $\|\mut_\mathcal{F}\| \le Cb < 1/2$ and $Q_0 \not\in \mathcal{F}$
 it must be the case that $Q_0 \in {\sbf}_0$ for some stopping time regime ${\sbf}_0$ and, in fact, we have, by Lemma \ref{impob2.lem}, that every cube $Q \in \mathbb{D}_{\mathcal{F}, Q_0}$ has the property that $Q \in {\sbf}_0$ as well. In particular, if for every $Q'_j \in \mathcal{G}'$ we write $Q^*_j$ to denote the grandparent of $Q'_j$
we have $Q^*_j \in \mathbb{D}_{\mathcal{F}, Q_0}$ and hence $Q^*_j \in {\sbf}_0$.
\end{remark}

To proceed with {\bf Case2b}  we now construct the set $F_{Q_0}$. If we are proving $H(a+b)$ we set $F_0 := \Gamma_{\sbf_0}(Q_0)$, where ${\sbf}_0$ is the stopping time regime such that $Q_0 \in {\sbf}_0$. If we are proving $H^*(a+b)$, we take $F_0 = \Gamma_{\sbf_0}$ (as usual). Now, regardless of whether we are proving $H(a + b)$ or $H^*(a + b)$, for each $Q'_j \in \mathcal{G}'$, we apply the induction hypothesis $H(a)$ and we let $$F_j := F_{Q'_j}$$ be the set satisfying properties (i), (ii), (iii) (adapted to the scale/size of $Q'_j$). Finally, we set $$F_{Q_0} = \cup_{j =0}^N F_j.$$ Note that $F_{Q_0}$ is a closed set, because it is the {\it finite} union of closed sets. We now verify that the set $F_{Q_0}$ has the necessary properties. When proving $H(a+ b)$ we see that property (i) holds rather trivially, using the triangle inequality\footnote{Here we use that $x_{Q'_j} \in B(x_{Q_0}, C_0 \ell(Q_0))$ so that $B(x_{Q'_j}, 20C_0\ell(Q'_j)) \subset B(x_{Q_0},20C_0\ell(Q'_j) + C_0 \ell(Q_0)) \subset B(x_{Q_0}, 20C_0 \ell(Q_0))$, where we use that $\ell(Q'_j) = (1/2) \ell(Q_j) \le (1/2) \ell(Q_0)$. Thus, using property (i) for $Q'_j$ we have $F_j \subset B(x_{Q_0}, 20C_0 \ell(Q_0))$. }. The next easiest property to verify is (iii) (or (II) when proving $H^*(a+b)$) and we do that next.

{\bf $F_{Q_0}$ has property (iii) (or (II) when proving $H^*(a+b)$)}: Recall that we have shown that $\mu(\cup_{\mathcal{G}'} Q'_j) \gtrsim \mu(Q_0)$, where the implicit constant depends on $d$ and the regularity constant for $E$. Also, by property (iii) for $F_j$ we have that $\mu(F_j \cap Q'_j) \ge c'_a \mu(Q'_j)$.  As the $Q'_j$ are pairwise disjoint it follows that
\[\mu(Q_0 \cap F_{Q_0}) \ge \sum_{j \ge 1} \mu(Q'_j \cap F_j) \ge  \sum_{j \ge 1}  c'_a \mu(Q'_j) \gtrsim \mu(Q_0),\]
which yields property (iii).

It remains to verify property (ii) (or (I) when proving $H^*(a+b)$) for $F_{Q_0}$ and we decompose the verification into two steps: {\bf $F_{Q_0}$ satisfies the upper regularity condition} and {\bf $F_{Q_0}$ satisfies the lower regularity condition and $F_{Q_0} \in \BP(\mathcal{E})$}.

{\bf $F_{Q_0}$ satisfies the upper regularity condition}: Here we prove the upper bound in the definition of $\Reg(C'_a, C_0\ell(Q))$ (when proving $H(a+b)$) or $\Reg(C'_a)$ (when proving $H^*(a+b)$). The proof is the same in either case ($H(a+b)$ or $H^*(a+b)$).

Let $x \in F_{Q_0}$ and $r \in (0, \infty)$. We consider the contributions from $F_0$ and $\{F_j\}_{j \ge 1}$ separately. Notice that $F_0 \subset \Gamma_{\sbf_0}$ regardless of whether we are showing $H(a+b)$ or $H^*(a+b)$, so that $F_0$
satisfies the upper regularity condition.
If $B(x,r)$ meets $F_0$ then $B(x,r) \subset B(y,2r)$ for some $y \in F_0$ and hence $\mu(B(x,r) \cap F_0) \le \mu(B(y,2r) \cap F_0) \lesssim r^n$ by the upper regularity property for $F_0$. We dominate the contribution from the union of the sets $\{F_j\}_{j \ge 1}$ by
\begin{align*}\sum_{j \ge 1} \mu(B(x,r) \cap F_j) &\le \sum_{j: \ell(Q'_j) > r} \mu(B(x,r) \cap F_j)   + \sum_{j: \ell(Q'_j) \le r} \mu(B(x,r) \cap F_j)\\
& =: T_1 + T_2.
\end{align*}

We first handle the term $T_1$ and we will see there is at most one non-zero term in $T_1$. Note that by hypothesis $F_j \subset B(x_{Q'_j}, 20C_0\ell(Q'_j))$ so that (as $C_0 > 1$) if $B(x,r)$ meets $F_j$ with $\ell(Q'_j) > r$, then $B(x,r) \subset B(x_{Q'_j}, 21C_0\ell(Q'_j))$. If $B(x,r)$ were to meet $F_k$ for $k \neq j$ then $$B(x_{Q'_j}, 21C_0\ell(Q'_j)) \cap B(x_{Q'_k}, 21C_0\ell(Q'_k)) \neq \emptyset,$$ and this contradicts the fact that $\dist(Q'_j, Q'_k) \ge 80C_0 \max\{\ell(Q'_j), \ell(Q'_k)\}$. Thus, $T_1 = 0$ or $T_1 = \mu(B(x,r) \cap F_j)$ for a single $F_j$ with $\ell(Q'_j) > r$ and, in that case, we can use the upper regularity of $F_j$ to conclude that $T_1 \lesssim r^d$.

We next handle the term $T_2$.  The collection of $F_j$ with $\ell(Q'_j) \le r$ is contained in $B(x, Cr)$ ($C = 80C_0$ will do). Using the upper regularity for $F_j$ we have $\mu(F_j) \lesssim \diam(F_j)^d \lesssim \ell(Q'_j)^d \lesssim \mu(Q'_j)$ and using that $F_j \subset B(x_{Q'_j}, 20C_0\ell(Q'_j))$ we have  $Q'_j \subset B(x,2Cr)$. Thus
\begin{align*}T_2 &= \sum_{j: \ell(Q_j') \le r} \mu(F_j \cap B(x,r))\\
& \lesssim \sum_{j: Q'_j \subset B(x,2Cr)} \mu(Q'_j) \le \mu(B(x,Cr) \cap E) \lesssim r^d,\end{align*}
where we have used that $E \in \Reg(C_*)$. This shows that $F_{Q_0}$ satisfies the upper regularity condition.

{\bf $F_{Q_0}$ satisfies the lower regularity condition and $F_{Q_0} \in \BP(\mathcal{E})$}:
Let $x \in F_{Q_0}$ and $r >0$ with the further restriction that $r < C_0\ell(Q)$ in the case we are proving $H(a+b)$. We decompose the proof into cases:
\begin{align*}
&\mbox{{\bf Case $\alpha$}: $x \in F_0$.}\\
&\mbox{{\bf Case $\beta$}: $x \in F_j$ for some $j \ge 1$ and $r < (800 + \eta)C_0 \ell(Q'_j)$.}\\
&\mbox{{\bf Case $\gamma$}: $x \in F_j$ for some $j \ge 1$ and $r \geq  (800 + \eta)C_0 \ell(Q'_j)$.}
\end{align*}

{\bf Case $\alpha$}: $x \in F_0$. In this case, if we are trying to prove $H(a+b)$ we
just use that $F_0 \in \Reg(2^{6d}C_{*}, C_0\ell(Q)$ and $F_0 \subset \Gamma_{\sbf_0}$ with $\Gamma_{\sbf_0} \in \mathcal{E}$.
Thus, by construction $F_0$ satisfies the lower regularity condition and is $\BP(\mathcal{E})$ (up to scale $\ell(Q_0)$).
In particular, $\mu(B(x,r) \cap F_{Q_0}) \ge \mu(B(x,r) \cap F_{0})   \gtrsim r^d$ and the set $B(x,r) \cap F_0$ is a subset of a set in $\mathcal{E}$. This takes care of case $\alpha$ when showing $H(a+b)$. When showing $H^*(a +b)$, the proof is almost identical.

{\bf Case $\beta$}: $x \in F_j$ for some $j \ge 1$ and $r < (800 + \eta)C_0 \ell(Q'_j)$. In this case, we simply use
the $\BP(\mathcal{E})$ and lower regularity conditions for $F_j$. In particular,  $$\mu(B(x,r) \cap F_{Q_0}) \ge \mu(B(x,r) \cap F_{j}) \gtrsim r^n$$ by the lower regularity property of $F_j$ and
using the $\BP(\mathcal{E})$ property of $F_j$ there exists $\Gamma \in \mathcal{E}$ such that
$\mu(B(x,r) \cap F_{Q_0} \cap \Gamma) \ge \mu(B(x,r) \cap F_{j} \cap \Gamma) \gtrsim r^n$.
(Recall that these properties hold for $F_j$ up to the scale $\ell(Q_j)$ by property (ii) for $F_j$.)
This takes care of case $\beta$.

{\bf Case $\gamma$}: $x \in F_j$ for some $j \ge 1$ and $r \geq (800 + \eta)C_0\ell(Q'_j)$. Recall by the discussion above (see Remark \ref{wehavegoodparents.rmk}) that $Q^*_j$ the grandparent of $Q'_j$ is in the stopping time regime ${\sbf}_0$ and hence, by choice of $C_0$, $\dist(x_{Q^*_j}, \Gamma_{\sbf_0}(Q_0)) \le  \eta \ell(Q_j^*)$. Moreover, by the choice of $C_0$ we have that $\dist(x_{Q'_j},x_{Q^*_j}) \le C_0\ell(Q_j)$ and $F_j \subset B(x_{Q'_j}, 40C_0 \ell(Q'_j))$. Thus, there exists $z \in \Gamma_{\sbf_0}(Q_0)$ such that $\dist(x_{Q'_j}, z) < \eta \ell(Q'_j) + 80C_0 \ell(Q'_j)$ and the condition on $r$ shows that $B(z,r/2) \subset B(x,r)$. Now, using the arguments of case $\alpha$
with $r$ replaced by $r/2$ (which produces slightly worse estimates) we can conclude that case $\gamma$ can be taken care of

We have now proved property (ii) if we were trying to prove $H(a+b)$ (or (I) when proving $H^*(a+b)$) and hence $H(a + b)$ and $H^*(a+b)$ hold.

\section{Applications to parabolic uniform rectifiability}\label{parabolic.sect}

In this section we use Theorem \ref{CoronaBP.thrm}, Theorem \ref{CoronaGLtrans.thrm} and results from \cite{BHHLN} to give a new characterization of parabolic uniformly rectifiable sets. We consider $n \in \mathbb{Z}$ and we will always assume that $n\geq 1$. We consider the  Euclidean $ (n+1) $-space
$ \mathbb R^{n+1} $ where points will be denoted by $ (X,t) = ( x_1,
 \dots,  x_n,t)$,  where $ X = ( x_1, \dots,
x_{n} ) \in \mathbb R^{n } $ and $t$ represents the time-coordinate. Let $  \langle \cdot ,  \cdot  \rangle $  denotes  the standard inner
product on $ \mathbb R^{n} $ and let  $  | X | = \langle X, X \rangle^{1/2} $ be
the  Euclidean norm of $ X. $  We let $||(X,t)||:=|X|+|t|^{1/2}$. Given $(X,t), (Y,s)\in\mathbb R^{n+1}$ we let $$d_p(X,t,Y,s)=d_p((X,t),(Y,s))=|X-Y|+|t-s|^{1/2}.$$ Throughout the section we consider the triple
$$(X,\dist,\mu):\ X:=\mathbb R^{n+1},\ \dist:=d_p,\ \mu:=H^{n-1}\times H^1,$$
and we let
$$d:=n+1.$$
We let $\mathcal{P}$ denote the set of hyperplanes in $X$ containing a line
parallel to the $ t $ axis.

As noted in the introduction, in \cite{HLN}, \cite{HLN1} the third and fifth author, together with John Lewis,  introduced a  notion of parabolic uniformly rectifiable sets. Using the notation introduced the paper this notion of parabolic uniformly rectifiable sets can be defined as follows.

\begin{definition}\label{p-UR} Let $E\subset X$.  $E$ is parabolic uniformly rectifiable set, or
$E$ is uniformly rectifiable in the parabolic sense, if
$E \in \Reg(C) \cap \GLem (\mathcal{P},2,2)$,  for some finite $C\geq 1$.
\end{definition}

To introduce the Lipschitz type graphs of which we want to consider big pieces,  we first note that in \cite{H}, \cite{HL}, \cite{LM}, \cite{LS}, \cite{M} the authors established the correct notion of (time-dependent) regular parabolic Lipschitz graphs from the point of view of parabolic singular integrals and parabolic measure. To expand a bit on this, recall that  $\psi:\mathbb R^{n-1}\times\mathbb R\to \mathbb R$ is called Lip(1,1/2), or Lip(1,1/2)  regular, with constant $b$, if
\begin{eqnarray}\label{1.1}
|\psi(x,t)-\psi(y,s)|\leq b(|x-y|+|t-s|^{1/2})\
\end{eqnarray}
whenever $(x,t)\in\mathbb R^{n}$, $(y,s)\in\mathbb R^{n}$.  $\Gamma=\Gamma_\psi\subset\mathbb R^{n+1}$ is said to be a (unbounded) Lip(1,1/2)  graph, with constant $b$, if
\begin{eqnarray}\label{1.1a}
\Gamma=\Gamma_\psi=\{(x,x_n,t)\in\mathbb R^{n-1}\times\mathbb R\times\mathbb R:x_n=\psi(x,t)\}
\end{eqnarray} for some Lip(1,1/2)  function $\psi$ having Lip(1,1/2)  constant bounded by $b$.   $ \psi = \psi ( x, t ) : \mathbb R^{n-1}\times\mathbb R\to \mathbb R$ is called a regular parabolic Lip(1,1/2)  function
with parameters $b_1$ and $b_2$, if $\psi$ has compact support
and satisfies   \begin{eqnarray} \label{1.7}
(i)&&|\psi(x,t)-\psi(y,t)|\leq b_{1}|x-y|, x, y\in \mathbb R^{n-1}, t\in\mathbb R,\notag\\
(ii)&& D_{1/2}^t\psi\in BMO(\mathbb R^n), \ \ \|D_{1/2}^t\psi\|_*\leq b_2<\infty.
\end{eqnarray}
 Here   $ D_{1/2}^t \psi  (x, t) $ denotes
the $ 1/2 $ derivative in $ t $ of $ \psi ( x, \cdot ), x $ fixed.
This half derivative in time can be defined by way of the Fourier
transform or by
\begin{eqnarray} \label{1.8}
 D_{1/2}^t  \psi (x, t)  \equiv \hat c \int_{ \mathbb R }
\, \frac{ \psi ( x, s ) - \psi ( x, t ) }{ | s - t |^{3/2} } \, ds
\end{eqnarray} for properly chosen $ \hat c$. $ \| \cdot \|_* $ denotes the
norm in parabolic $BMO(\mathbb R^{n})$.  For a definition of the space parabolic $BMO(\mathbb R^{n})$ we refer to \cite{HLN}. It is well known, see \cite{HL}, that if $\psi$
is a regular parabolic Lip(1,1/2)  function
with parameters $b_1$ and $b_2$, then $\psi$ is Lip(1,1/2)  regular with constant $b=b(b_1,b_2)$. However, there are examples of
functions $\psi$ which are  Lip(1,1/2)  regular but not regular parabolic Lip(1,1/2), see \cite{LS}.

\begin{definition}\label{defgpg}
We say that $\Gamma$ is a regular (or good) parabolic graph with parameters $b_1$ and $b_2$, $\Gamma \in \GPG(b_1,b_2)$ for short or simply $\Gamma \in \GPG$ if
 the parameters are implicit, if after a possible rotation of the spatial variables $\Gamma=\Gamma_\psi$ can be represented as in \eqref{1.1a}
for some a regular parabolic Lip(1,1/2)  function $ \psi = \psi ( x, t ) : \mathbb R^{n-1}\times\mathbb R\to \mathbb R$
with parameters $b_1$ and $b_2$.
\end{definition}

We next formulate the following lemma which states that good parabolic graphs are uniformly rectifiable in the parabolic sense.
\begin{lemma}\label{glem4goodgraph.thrm} Assume that after a possible rotation of the spatial variables $\Gamma=\Gamma_\psi$ can be represented as in \eqref{1.1a}
for a Lip(1,1/2) function $\psi$. Then  $\Gamma$ is uniformly rectifiable in the parabolic sense if and only if $ D_{1/2}^t\psi\in BMO(\mathbb R^n)$, in particular, if
$\Gamma \in \GPG(b_1,b_2)$ then
$\Gamma \in \GLem (\mathcal{P}, 2,2, M)$, where $M$ only depends on $d,b_1,b_2$.
\end{lemma}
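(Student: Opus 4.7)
The plan is to translate the geometric lemma for $\Gamma_\psi$ with respect to vertical planes $\mathcal{P}$ into an analytic statement about $\psi$ on $\mathbb{R}^{n-1}\times\mathbb{R}$, and then use a Littlewood--Paley characterization of parabolic BMO to match $D_{1/2}^t\psi \in BMO$ against the Carleson packing of the parabolic $\beta_{2,\mathcal{P}}$-numbers. As a first step, the map $(y,s)\mapsto(y,\psi(y,s),s)$ is bi-Lipschitz between $(\mathbb{R}^{n-1}\times\mathbb{R},d_p)$ and $(\Gamma_\psi,d_p)$ (using that $\psi$ is Lip$(1,1/2)$), so the pullback of $\mu$ is comparable to Lebesgue measure on $\mathbb{R}^{n-1}\times\mathbb{R}$; in particular $\Gamma_\psi \in \Reg(C)$ with $C$ depending only on $b_1,b_2$. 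A plane $P \in \mathcal{P}$ has the form $\{x_n = A\cdot y + c\}$ for some $A \in \mathbb{R}^{n-1}$, $c\in\mathbb{R}$, so the parabolic distance from a point of $\Gamma_\psi$ to $P$ is comparable to $|\psi(y,s) - A\cdot y - c|$. For a dyadic cube $Q\subset\Gamma_\psi$ of diameter $r$, corresponding under the graph map to a parabolic box $\tilde Q\subset\mathbb{R}^{n-1}\times\mathbb{R}$ of spatial side $\sim r$ and time side $\sim r^2$, this gives
\begin{equation*}
\beta_{2,\mathcal{P}}(Q)^2\,\mu(Q) \,\approx\, r^{-2}\inf_{A,c}\int_{2\tilde Q}|\psi(y,s)-A\cdot y-c|^2\,dy\,ds,
\end{equation*}
so the geometric lemma reduces to a Carleson packing condition for the right-hand side.

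Next I would fix the time centre $t_0$ of $\tilde Q$ and use the triangle inequality
\begin{equation*}
|\psi(y,s) - A\cdot y - c|^2 \,\le\, 2|\psi(y,t_0) - A\cdot y - c|^2 + 2|\psi(y,s) - \psi(y,t_0)|^2
\end{equation*}
to split the error into a spatial and a temporal contribution. For the spatial piece, choose $A,c$ to be the optimal affine approximant of $y\mapsto\psi(y,t_0)$; since $\psi(\cdot,t_0)$ is spatially Lipschitz with constant $\le b_1$, Dorronsoro's theorem in $\mathbb{R}^{n-1}$ applied at each fixed $t_0$ and integrated in time produces a Carleson packing with constant $\lesssim b_1^2$. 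For the temporal piece, the aim is the Carleson bound
\begin{equation*}
\sum_{\tilde Q\subset\tilde R}r^{-2}\int_{2\tilde Q}|\psi(y,s)-\psi(y,t_0)|^2\,dy\,ds \,\lesssim\, \|D_{1/2}^t\psi\|_{BMO}^2\,|\tilde R|\,\lesssim\, b_2^2\,|\tilde R|.
\end{equation*}
The converse implication in the ``iff'' claim is then gotten by testing $\beta_{2,\mathcal{P}}(Q)$ against the constant planes $\{x_n=c\}$: the resulting Carleson bound on $L^2$ time-oscillations of $\psi$ forces the BMO control of $D_{1/2}^t\psi$ via the same characterization.

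The main obstacle is the identification of this temporal Carleson packing with $\|D_{1/2}^t\psi\|_{BMO}^2$. To handle it, I would apply a parabolic Calder\'on reproducing formula in time adapted to the half-derivative, writing $\psi$ as a time integral against $D_{1/2}^t\psi$ with a Riesz-type kernel of order $1/2$ (so that squaring matches the parabolic scaling $r\leftrightarrow r^2$), and then invoke the Fefferman--Stein characterization of BMO by Carleson measures for Littlewood--Paley projections in time. The time-oscillation quantity then factorizes as a square function of $D_{1/2}^t\psi$ at parabolic scale $r^2$, whose Carleson norm is exactly controlled by $\|D_{1/2}^t\psi\|_{BMO}^2$. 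This delivers the direction $D_{1/2}^t\psi \in BMO \Rightarrow \Gamma_\psi \in \GLem(\mathcal{P},2,2,M)$ with $M=M(d,b_1,b_2)$, and, combined with the preceding steps, yields the ``in particular'' assertion as well as both halves of the equivalence.
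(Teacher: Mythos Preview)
The paper does not prove this lemma directly: it cites \cite[pp.~249--251]{H} for the implication $D_{1/2}^t\psi\in BMO \Rightarrow \Gamma_\psi\in\GLem(\mathcal{P},2,2)$ (together with \cite[Section~7]{HL} to reconcile two formulations of the half-derivative condition), and \cite[pp.~370--373]{HLN} for the converse. Your attempt to sketch a self-contained argument is therefore more ambitious than what the paper does, and your overall architecture---reducing to a Carleson packing for time-independent affine approximants of $\psi$ on parabolic boxes, then separating spatial and temporal contributions---is indeed the route taken in those references.

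There are, however, two genuine problems in the execution. First, the subtractor $\psi(y,t_0)$ (point evaluation at the time centre) is too rough. The temporal residual $|\psi(y,s)-\psi(y,t_0)|$ is controlled only by the Lip$(1,1/2)$ seminorm, giving $r^{-2}\int_{2\tilde Q}|\psi(y,s)-\psi(y,t_0)|^2\,dy\,ds \lesssim b^2\,|\tilde Q|$ trivially; summing over all dyadic $\tilde Q\subset\tilde R$ then contributes $|\tilde R|$ \emph{per generation}, which diverges. The BMO hypothesis cannot help because a point evaluation carries no cancellation. The same defect spoils the spatial step: Dorronsoro is a sum over \emph{all} spatial scales for a \emph{fixed} function, but your sum involves single spatial scales for slices $\psi(\cdot,t_0(\tilde Q))$ whose time centre varies with the parabolic scale, and ``integrating Dorronsoro in $t_0$'' does not match that. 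The fix is to replace $\psi(y,t_0)$ by a smooth time-mollification $P_{r^2}\psi$ (equivalently, to run a parabolic Littlewood--Paley decomposition from the start); then the temporal piece becomes an honest square function of $D_{1/2}^t\psi$, to which the Carleson/BMO characterisation applies, and the spatial piece is handled by Dorronsoro slice by slice with a fixed smooth function of $s$.

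Second, your converse is phrased backwards. Taking $A=0$ (``testing against constant planes'') yields an \emph{upper} bound on $\beta_{2,\mathcal{P}}(Q)$, which is useless for deducing BMO from $\GLem$. The correct observation is that every $P\in\mathcal{P}$ is $t$-independent, so for each fixed $y$ the time-average $\overline\psi(y)$ minimises $\int_{I_t}|\psi(y,s)-a|^2\,ds$ over constants $a$; hence $r^{-2}\int_{2\tilde Q}|\psi-\overline\psi|^2$ is a \emph{lower} bound for the $\beta$-integrand, irrespective of the optimal plane. The Carleson packing of $\beta_{2,\mathcal{P}}$ then forces a Carleson bound on the time-oscillations, and the Littlewood--Paley characterisation of parabolic BMO finishes.
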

\begin{proof} The fact that
$\Gamma \in \GPG(b_1,b_2)$ implies  $\Gamma \in \GLem (\mathcal{P}, 2,2, M)$, with $M$
depending on $d,b_1,b_2$, is proved in \cite[pp. 249-251]{H}. In \cite{H} a different formulation
of the half-order derivative condition was used, but the two formulations are equivalent
for Lip(1,1/2) graphs, as is proved in \cite[Section 7]{HL}. The converse implication, i.e., that if  $\Gamma$ is uniformly rectifiable then
$ D_{1/2}^t\psi\in BMO(\mathbb R^n)$, is proved in
\cite[pp. 370-373]{HLN}.  For both of these implications, the proofs in the cited references are given in terms of
continuous parameter versions of $\beta$, rather than the dyadic version, but it is easy to see that a Carleson
measure condition for the former is equivalent to a dyadic Carleson measure condition for the latter.
\end{proof}

To proceed we next state the following result concerning Corona decompositions of parabolic uniformly rectifiable sets. The theorem  is proved in \cite{BHHLN}.

\begin{theorem}\label{Corona}
Let $E\subset X$ and assume that $E \in \Reg(C_*) \cap \GLem (\mathcal{P},2,2,M)$,   i.e., $E$ is uniformly rectifiable in the parabolic sense. Suppose
$0 < \eta$
and $K>  1$. Then there exists $b_1,b_2$, both depending at most of $d,C_*,M$ and $\eta,K$ such that $E$ admits an $(\eta, K)$-coronization with respect to $\mathcal{E} = \GPG(b_1,b_2)$.
\end{theorem}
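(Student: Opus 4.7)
The plan is to adapt the classical David--Semmes corona construction (as in \cite{DS1,DS2}) to the parabolic setting, with the added subtlety that the approximating graphs must lie in $\GPG(b_1,b_2)$, not merely be Lip(1,1/2). Throughout, I would work with a small auxiliary parameter $\epsilon = \epsilon(\eta,K,d,C_*,M)$ to be chosen at the end.

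\textbf{Step 1 (Stopping-time construction).} For each $Q \in \mathbb{D}(E)$, select a parabolic plane $P_Q \in \mathcal{P}$ nearly realizing the infimum in $\beta_{2,\mathcal{P}}(Q)$. Starting from a maximal cube $R$ with plane $P_R$, form a coherent regime $\sbf$ by adding descendants $Q \subseteq R$ for which simultaneously:
\begin{enumerate}
\item[(a)] $\beta_{2,\mathcal{P}}(Q') \le \epsilon$ for every ancestor $Q \subseteq Q' \subseteq R$,
\item[(b)] the ``angle'' between $P_{Q'}$ and $P_R$ (both in $\mathcal{P}$, hence parallel to the $t$-axis) stays below $\epsilon$ on all such ancestors,
\item[(c)] the displacement of $P_{Q'}$ from $P_R$ at the center of $Q'$ stays below $\epsilon \ell(Q')$.
\end{enumerate}
Stop at the first violation, declaring the maximal failing subcubes to be the tops of new regimes. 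This produces the partition $\dd(E) = \G \cup \B$ of Definition~\ref{coronadef.def}, with $\B$ consisting of cubes where $\beta_{2,\mathcal{P}}(Q) > \epsilon$.

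\textbf{Step 2 (Carleson packing).} The bad cubes satisfy a packing bound directly from $E \in \GLem(\mathcal{P},2,2,M)$, since
\[
\sum_{Q' \subseteq Q,\, Q' \in \B} \mu(Q') \,\le\, \epsilon^{-2} \sum_{Q' \subseteq Q} \beta_{2,\mathcal{P}}(Q')^2 \mu(Q') \,\le\, \epsilon^{-2} M \mu(Q).
\]
For the maximal cubes of $\mathcal{M}$ created by stops of type (b) or (c), a comparison between the two nearly optimal planes $P_{Q'}$ and $P_{R}$ on common points forces $\beta_{2,\mathcal{P}}$ to be $\gtrsim \epsilon$ either on $Q'$ or on the parent (this is the standard David--Semmes ``plane-change forces a $\beta$'' argument adapted to parabolic planes, which works identically since $\mathcal{P}$ is closed under the relevant limit operations). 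Hence the maximal cubes also pack with constant $C_{\eta,K} = C(d,C_*,M,\epsilon)$.

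\textbf{Step 3 (Graph construction in a regime).} Inside $\sbf$, the planes $\{P_Q\}_{Q \in \sbf}$ all deviate from $P_{R} = P_{Q(\sbf)}$ by at most $O(\epsilon)$ in tilt and $O(\epsilon \ell(Q))$ in position at each scale. Choose coordinates so $P_R = \{x_n = 0\}$, and using a David--Semmes partition-of-unity / regularized-distance construction (with parabolic dilations $\delta_\lambda(X,t) = (\lambda X, \lambda^2 t)$), build $\psi_\sbf \colon \mathbb{R}^{n-1}\times \mathbb{R} \to \mathbb{R}$ with $\Gamma_\sbf = \Gamma_{\psi_\sbf}$ such that $\Gamma_\sbf$ passes within $O(\epsilon) \ell(Q)$ of $P_Q$ over $KQ$ for every $Q \in \sbf$, and agrees with $P_R$ away from $\bigcup_{Q \in \sbf} KQ$. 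The tilt and position bounds (b), (c) directly yield that $\psi_\sbf$ is Lip(1,1/2) with constant $b_1 = C\epsilon$. This verifies \eqref{eq2.2a} for $\sbf$ with approximation parameter $C\epsilon$; choose $\epsilon$ small so $C\epsilon < \eta$.

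\textbf{Step 4 (Upgrade Lip(1,1/2) to $\GPG$, the main obstacle).} The delicate point is showing $D_{1/2}^t \psi_\sbf \in BMO$ with norm bounded by some $b_2 = b_2(d,C_*,M,\eta,K)$. By the converse direction of Lemma~\ref{glem4goodgraph.thrm}, it suffices to prove that $\Gamma_\sbf$ is parabolic uniformly rectifiable, i.e., $\Gamma_\sbf \in \GLem(\mathcal{P},2,2,M')$. For cubes $\widetilde Q \in \mathbb{D}(\Gamma_\sbf)$ at scales at which $\Gamma_\sbf$ tracks cubes $Q \in \sbf$ with $\ell(Q) \approx \ell(\widetilde Q)$, the construction gives $\beta_{2,\mathcal{P}}(\Gamma_\sbf, \widetilde Q) \lesssim \beta_{2,\mathcal{P}}(E, Q) + \epsilon$; at scales below all $Q \in \sbf$ tracking $\widetilde Q$, the graph is smooth (essentially planar) and $\beta_{2,\mathcal{P}}(\Gamma_\sbf, \widetilde Q) = 0$. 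Summing, the Carleson sum on $\Gamma_\sbf$ is dominated by $M + \epsilon^2 C_{\eta,K}$ times the base measure, so $\Gamma_\sbf \in \GLem(\mathcal{P},2,2,M')$ with $M'$ controlled by the allowable parameters. Lemma~\ref{glem4goodgraph.thrm} then produces $b_2$.

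\textbf{Step 5 (Conclusion).} Combine: (1) coherence is built in, (2) the packing of $\B \cup \mathcal{M}$ from Step 2, (3) the closeness \eqref{eq2.2a} from Step 3 with $\epsilon$ chosen $< \eta/C$, and (4) $\Gamma_\sbf \in \GPG(b_1,b_2) = \mathcal{E}$ from Step 4. This produces the $(\eta,K)$-coronization. The hardest step is Step 4, which is where the parabolic theory genuinely departs from the elliptic one, because the BMO half-derivative condition is strictly stronger than being Lip(1,1/2) and must be recovered indirectly via the geometric lemma characterization of $\GPG$.
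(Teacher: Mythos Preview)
The paper does not prove Theorem \ref{Corona}: immediately after the statement it says ``The theorem is proved in \cite{BHHLN}'', i.e., the proof is deferred to a companion paper. So there is no in-paper argument to compare your proposal against.

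Your outline follows the expected David--Semmes template and correctly isolates Step 4 as the genuinely parabolic difficulty. However, Step 4 as written has a real gap. You claim that for a cube $\widetilde Q$ of $\Gamma_\sbf$ tracking some $Q\in\sbf$ one has $\beta_{2,\mathcal{P}}(\Gamma_\sbf,\widetilde Q)\lesssim \beta_{2,\mathcal{P}}(E,Q)+\epsilon$, and then conclude that the Carleson sum on $\Gamma_\sbf$ is bounded by $(M+\epsilon^2 C_{\eta,K})\mu(\widetilde R)$. But the additive $\epsilon$ does \emph{not} pack: summing $\epsilon^2\mu(\widetilde Q)$ over all tracking cubes $\widetilde Q\subseteq\widetilde R$ gives $\epsilon^2\sum_{Q\in\sbf,\,Q\subseteq R}\mu(Q)$, and a coherent regime $\sbf$ may have arbitrarily many generations, so this sum is not controlled by $\mu(R)$ (there is no reason it should be $\lesssim C_{\eta,K}\mu(R)$; the constant $C_{\eta,K}$ packs only the \emph{tops} $Q(\sbf)$ and the bad cubes, not all of $\sbf$). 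In the elliptic setting this issue never arises because a Lipschitz graph automatically satisfies the geometric lemma, but here Lip(1,1/2) alone does not force $D^t_{1/2}\psi\in BMO$, which is exactly the point of Lemma \ref{glem4goodgraph.thrm} and the example at the end of Section \ref{parabolic.sect}.

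To close the gap one needs a genuinely sharper estimate: either (i) show that for interior tracking cubes $\beta_{2,\mathcal{P}}(\Gamma_\sbf,\widetilde Q)$ is controlled by a suitable (geometrically decaying) average of $\beta_{2,\mathcal{P}}(E,Q')$ over nearby $Q'$, with \emph{no} additive $\epsilon$, so that the Carleson sum on $\Gamma_\sbf$ is dominated by the Carleson sum on $E$; or (ii) bypass the $\beta$-route and estimate $\|D^t_{1/2}\psi_\sbf\|_{*}$ directly from the partition-of-unity formula for $\psi_\sbf$, expressing the oscillation of the half-derivative in terms of the tilts and displacements $|P_{Q}-P_{Q'}|$ between nearby planes, which are in turn controlled by $\beta_{2,\mathcal{P}}(E,\cdot)$ via the ``plane-change forces a $\beta$'' mechanism you invoke in Step 2. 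Either route requires substantially more bookkeeping than your outline indicates, and is presumably the technical core of \cite{BHHLN}.
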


Using Theorem \ref{Corona} we are able to specify and apply Theorem \ref{CoronaBP.thrm} and Theorem \ref{CoronaGLtrans.thrm}  to the parabolic setting. Theorem \ref{CoronaBP.thrm} yields the following.

\begin{theorem}\label{BP2parabolic.thrm}
Suppose that $E \in \Reg(C_*)$ admits a coronization with respect to $\mathcal{E} = \GPG(b_1,b_2)$ for some fixed $b_1,b_2$. Then $E \in \BP^2(\GPG(b_1,b_2))$ with constants depending on $d, C_*,b_1,b_2$ and the constants in the coronization.
\end{theorem}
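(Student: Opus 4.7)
The plan is to deduce this theorem as an immediate application of Theorem \ref{CoronaBP.thrm}, specialized to the triple $(X,\dist,\mu) = (\mathbb{R}^{n+1}, d_p, H^{n-1}\times H^1)$ with $d = n+1$. Since $E \in \Reg(C_*)$ and the existence of a coronization with respect to $\mathcal{E}=\GPG(b_1,b_2)$ are already part of the hypothesis, the only substantive step is verifying that the entire class $\GPG(b_1,b_2)$ lies in $\Reg(C_{**})$ for some uniform $C_{**}$ depending only on $d$, $b_1$, $b_2$. This uniform regularity is needed so that Theorem \ref{CoronaBP.thrm} applies with $\mathcal{E}\subseteq \Reg(C_{**})$.

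To verify this, I would argue as follows. Given $\Gamma = \Gamma_\psi \in \GPG(b_1,b_2)$, the function $\psi$ is a regular parabolic Lip(1,1/2) function with parameters $b_1, b_2$, and hence, by the result of \cite{HL} quoted in the excerpt, $\psi$ is Lip(1,1/2) with some constant $b = b(b_1,b_2)$. The spatial-time projection $\pi: \Gamma \to \mathbb{R}^{n-1}\times\mathbb{R}$ defined by $(x, \psi(x,t), t) \mapsto (x,t)$ is then bi-Lipschitz with respect to $d_p$, since
\[
|x-y|+|t-s|^{1/2} \,\le\, d_p\bigl((x,\psi(x,t),t),(y,\psi(y,s),s)\bigr) \,\le\, (1+b)\bigl(|x-y|+|t-s|^{1/2}\bigr).
\]
Because $(\mathbb{R}^{n-1}\times\mathbb{R}, d_p, H^{n-1}\times H^1)$ is visibly $(n+1)$-regular (a parabolic ball of radius $r$ has measure $\approx r^{n-1}\cdot r^2 = r^{n+1}$), transferring regularity across the bi-Lipschitz map $\pi$ shows $\Gamma \in \Reg(C_{**})$ with $C_{**}$ depending only on $d, b_1, b_2$.

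With this in place, the hypotheses of Theorem \ref{CoronaBP.thrm} are met verbatim: $E \in \Reg(C_*)$ admits a coronization with respect to $\mathcal{E} = \GPG(b_1,b_2)\subseteq \Reg(C_{**})$, and so $E \in \BP^2(\mathcal{E}) = \BP^2(\GPG(b_1,b_2))$, with constants depending only on $d, C_*, b_1, b_2$ and the coronization parameters $\eta, K, C_{\eta,K}$. There is essentially no obstacle to overcome here: all the real work has been done in the proof of Theorem \ref{CoronaBP.thrm}, and this parabolic statement is merely its specialization to the class of regular parabolic graphs, once the routine $(n+1)$-regularity of those graphs in the parabolic metric is recorded.
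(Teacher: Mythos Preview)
Your proposal is correct and matches the paper's approach: the paper simply states that Theorem \ref{CoronaBP.thrm} ``yields'' Theorem \ref{BP2parabolic.thrm} without further argument, and your write-up is a slightly expanded version of this, making explicit the routine verification that $\GPG(b_1,b_2)\subseteq \Reg(C_{**})$ with $C_{**}=C_{**}(d,b_1,b_2)$ via the Lip(1,1/2) bound $b=b(b_1,b_2)$ and the bi-Lipschitz projection to $\mathbb{R}^{n-1}\times\mathbb{R}$.
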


Theorem \ref{glem4goodgraph.thrm} says that
$\Gamma \in \GPG(b_1,b_2)$ implies $\Gamma \in \GLem (\mathcal{P},2,2,M)$ with $M = M(b_1,b_2, d)$. It is easy to deduce the weak geometric lemma from
the geometric lemma (see, e.g.,
\cite[Section 2]{HLN}),
and in particular,  $\Gamma \in \GPG(b_1,b_2)$ implies that for every $\epsilon \in (0,1]$ there exists $M_\epsilon$ depending on $d, b_1,b_2$ and $\epsilon$ such that $\Gamma \in \WGLem (\mathcal{P},\epsilon, M_\epsilon)$.
Due to the graph structure the following lemma holds.

\begin{lemma}\label{graphbwgl} Assume that after a possible rotation of the spatial variables $\Gamma=\Gamma_\psi$ can be represented as in \eqref{1.1a}
for a Lip(1,1/2) function $\psi$. Suppose that  for every $\epsilon \in  (0,1]$ there exists $M_\epsilon$ such that $\Gamma \in \WGLem (\mathcal{P},\epsilon, M_\epsilon)$. Then for every $\epsilon \in (0,1]$ there exists $M'_\epsilon$ depending on the Lip(1,1/2) constant, $d$ and the function $\gamma(\epsilon):= M_\epsilon$ such that $\Gamma \in \BWGLem (\mathcal{P},\epsilon, M'_\epsilon)$. In particular, if $\Gamma \in \GPG(b_1,b_2)$ then for every $\epsilon \in (0,1]$ there exists $M'_\epsilon$ depending on $d, b_1,b_2$ and $\epsilon$ such that $\Gamma \in \BWGLem (\mathcal{P},\epsilon, M'_\epsilon)$.
\end{lemma}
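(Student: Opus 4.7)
The strategy is to show that for $\epsilon$ below a threshold $\epsilon_0 = \epsilon_0(b,d)$, any cube $Q$ with $b\beta_\mathcal{P}(Q) > \epsilon$ must possess a dyadic ancestor $Q' \supseteq Q$ with $\ell(Q')/\ell(Q) \le L(b,d)$ and $\beta_\mathcal{P}(Q') > c_0(b,d)\,\epsilon$. Granting this, the bilateral weak geometric lemma at parameter $\epsilon$ reduces to the unilateral one at parameter $c_0 \epsilon$ via ancestor counting: each such $Q'$ is the ancestor of only $O_{b,d}(1)$ dyadic descendants of the prescribed relative size, so
\[ \sum_{\substack{Q \subseteq R \\ b\beta_\mathcal{P}(Q) > \epsilon}} \mu(Q) \; \le \; C(b,d) \sum_{\substack{Q' \subseteq R \\ \beta_\mathcal{P}(Q') > c_0 \epsilon}} \mu(Q') \; \le \; C(b,d)\, M_{c_0 \epsilon}\, \mu(R). \]
For $\epsilon \ge \epsilon_0$, one covers trivially from the $\epsilon_0$ case, so $M'_\epsilon$ ends up depending on $\epsilon$, $b$, $d$, and the function $\gamma(\delta) := M_\delta$ as required.

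To prove the implication, I argue contrapositively: assume $\beta_\mathcal{P}(Q') \le \epsilon$ for every dyadic ancestor $Q'$ of $Q$ with $\ell(Q') \le L(b,d)\ell(Q)$, with $\epsilon < \epsilon_0(b,d)$. Every plane in $\mathcal{P}$ has the form $\{y : \nu_s \cdot x + \nu_n x_n = c\}$ for some unit vector $(\nu_s, \nu_n) \in \mathbb{R}^{n-1} \times \mathbb{R}$. A first step is to rule out the ``vertical'' case $|\nu_n| < c(b)$: using the $d$-regularity of $\Gamma$ to exhibit two graph points in a suitable dilation of $Q$ whose $\mathbb{R}^{n-1}$-projections differ by a quantity comparable to $\diam(Q)$ in the direction $\nu_s/|\nu_s|$, the Lip(1,1/2) bound $|\nabla_x \psi| \le b$ forces the corresponding affine values $\nu_s \cdot x + \nu_n \psi(x,t)$ to differ by an amount $\ge c(b,d)\diam(Q)$, contradicting both graph points being within $\epsilon\diam(Q)$ of the plane when $\epsilon$ is small. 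Hence any near-optimal $P$ is the graph $\{(x, f(x), t) : x \in \mathbb{R}^{n-1}, t \in \mathbb{R}\}$ of an affine function $f$ with $\|\nabla f\|_\infty \le C(b)$.

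With $P$ so represented by $f$ and an analogous $\widehat{f}$ associated to a specific ancestor $\widehat{Q}$, I estimate $b\beta_\mathcal{P}(Q)$ as follows. For any $z = (z', f(z'), s) \in P \cap B(x_Q, 2\diam(Q))$, the graph point $(z', \psi(z', s), s) \in \Gamma$ yields $\dist(z, \Gamma) \le |\psi(z', s) - f(z')|$, which I split as $|\psi(z', s) - \widehat{f}(z')| + |\widehat{f}(z') - f(z')|$. Choosing $M = M(b)$ and a dilation $K = K(b)$ so that the base-space projection of $K\widehat{Q}$ covers the parabolic ball of radius $2\diam(Q)$ about the projection of $x_Q$ to $\mathbb{R}^{n-1} \times \mathbb{R}$---which is possible by the Lip(1,1/2) property---and invoking the remark that $\beta$ may be taken with any dilation $\kappa \ge 2$ (relating $\beta^{(K)}(\widehat{Q})$ to $\beta^{(2)}$ of a slightly larger ancestor, which is among the cubes assumed small), the first term is bounded by $C(b)\epsilon\diam(Q)$. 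The second term is handled by affine extrapolation: $f$ and $\widehat{f}$ agree to within $2\epsilon\diam(Q)$ on the projection of $2Q$ (which contains a parabolic ball of radius $\ge c(b)\diam(Q)$), and since both are affine, on the larger ball of radius $2\diam(Q)$ they differ by at most $C(b)\epsilon\diam(Q)$. Combining yields $b\beta_\mathcal{P}(Q) \le C(b,d)\epsilon$, completing the contrapositive.

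The principal obstacle will be the dichotomy between ``graph'' and ``vertical'' planes in $\mathcal{P}$, which must be resolved via $d$-regularity and the Lipschitz bound on $\psi$; the second subtle point is the dilation bookkeeping (choices of $\kappa, K, M, L$ all depending on $b$) ensuring that the ancestor's base-space projection covers the parabolic ball of radius $2\diam(Q)$ around the projection of $x_Q$. Once these are set up, the remaining estimates reduce to a routine comparison of two affine functions on concentric parabolic balls.
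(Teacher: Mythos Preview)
Your approach is correct and follows essentially the same strategy as the paper's: both reduce the bilateral quantity $b\beta_{\mathcal{P}}(Q)$ to the unilateral $\beta_{\mathcal{P}}$ at a boundedly larger scale, using the Lip(1,1/2) graph structure of $\Gamma$, and then invoke the WGL packing.

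The paper's execution is more direct. Restricting to planes $P$ passing through the center $x_Q$ (which changes $\beta$ only by a universal factor), it asserts the single pointwise inequality
\[
\sup_{(Z,\tau)\in P\cap B(x_Q,2\diam Q)} \dist((Z,\tau),\Gamma)\ \le\ \sup_{(Y,s)\in KQ}\dist((Y,s),P),
\]
for some $K=K(b)$, and from this deduces $b\beta_{\mathcal{P}}(Q)\le c(b)\,\beta_{\mathcal{P}}^{(K)}(Q)$, i.e.\ your ancestor implication in one stroke. The paper does \emph{not} spell out the vertical-plane dichotomy you treat carefully; that case is implicitly absorbed into the choice of $K(b)$ (when $|\nu_n|$ is small the right-hand side above is automatically $\gtrsim_b \diam Q$, while the left-hand side is always $\lesssim_b \diam Q$). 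Your explicit handling of this point is a genuine clarification.

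Your two-plane comparison with affine extrapolation is more than is needed, however. Once the near-optimal plane $\widehat P$ for a single suitably large ancestor $\widehat Q$ is known to be the graph of an affine $\widehat f$ with $|\nabla\widehat f|\le C(b)$, you may simply take $A=\widehat P$ in the definition of $b\beta_{\mathcal{P}}(Q)$: for $z=(z',\widehat f(z'),s)\in\widehat P\cap B(x_Q,2\diam Q)$ the graph point $(z',\psi(z',s),s)$ lies in $2\widehat Q$ (by the Lip(1,1/2) bound, provided $\diam\widehat Q\ge C(b)\diam Q$), and hence $|\psi(z',s)-\widehat f(z')|\le C(b)\,\epsilon\,\diam\widehat Q$. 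This avoids introducing a second plane $P$ at scale $Q$ and the extrapolation step entirely, and is essentially what the paper does.
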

\begin{proof} Given $\epsilon \in  (0,1]$ assume that $\Gamma \in \WGLem (\mathcal{P},\epsilon, M_\epsilon)$. Let $Q\in \mathbb D(\Gamma)$ and let $(X_Q,t_Q)$ denote the center of $Q$. Consider
\[\beta_{\mathcal{P},\Gamma}(Q) := \inf_{P \in \mathcal{P}}\left\{ \diam(Q)^{-1} \sup_{(Y,s) \in 2Q} \dist((Y,s),P) \right\}.\]
Given $Q$ we let $$\tilde {\mathcal{P}}_Q:=\{P\in  \mathcal{P}:\ P \mbox{ passes through } (X_Q,t_Q)\}$$
and we introduce
\[\tilde \beta_{\mathcal{P},\Gamma}(Q) :=\inf_{P \in \tilde {\mathcal{P}}_Q}\left\{ \diam(Q)^{-1} \sup_{(Y,s) \in 2Q} \dist((Y,s),P) \right\}.\]
Then
\begin{align}\label{equiv}
\beta_{\mathcal{P},\Gamma}(Q)\leq \tilde \beta_{\mathcal{P},\Gamma}(Q)\leq c  \beta_{\mathcal{P},\Gamma}(Q)
\end{align}
where the constant $c\geq 2$ is independent of $Q$ and $\Gamma$. Consider an arbitrary $P\in \tilde {\mathcal{P}}$. Then
\begin{align}\label{equiv+} b\beta_{\mathcal{P},\Gamma}(Q)&\leq
\diam(Q)^{-1} \sup_{(Y,s) \in 2Q} \dist((Y,s),P)\notag\\
&+\diam(Q)^{-1} \sup_{ (Z,\tau) \in P \cap B((X_Q,t_Q),2\diam(Q))}  \dist((Z,\tau),\Gamma).
\end{align}
Using that $\Gamma$ is the graph of a Lip(1,1/2) function with constant $b$ we see that there exists a constant $K=K(b)\geq 1$ such that
\begin{equation}\label{equiv++}
 \sup_{ (Z,\tau) \in P \cap B((X_Q,t_Q),2\diam(Q))}  \dist((Z,\tau),\Gamma)\leq  \sup_{(Y,s) \in KQ} \dist((Y,s),P).
\end{equation}
Combining \eqref{equiv}-\eqref{equiv++} we deduce that
\begin{equation}\label{equiv+++} b\beta_{\mathcal{P},\Gamma}(Q)\leq c(b)\beta_{\mathcal{P},\Gamma}(KQ).
\end{equation}
Using this inequality we see that for every $\epsilon \in (0,1]$ there exists $M'_\epsilon$ depending only on $b$, $d$ and the function $\gamma(\epsilon):= M_\epsilon$ such
that $$\Gamma \in \BWGLem (\mathcal{P},\epsilon, M'_\epsilon).$$
\end{proof}

Theorem \ref{CoronaGLtrans.thrm} now yields the following where Lemma \ref{graphbwgl} is used in part (iii).

\begin{theorem}\label{t4.6}
Suppose that $E \in \Reg(C_*)$ admits a coronization with respect to $\mathcal{E} = \GPG(b_1,b_2)$ for some fixed $b_1,b_2$. Then the following hold.
\begin{enumerate}
    \item [(i)] $E \in \GLem (\mathcal{P}, 2,2, M)$ where $M$ depends on $C_*,b_1,b_2, d$ and the constants in the coronization. In particular, $E$ is uniformly rectifiable in the parabolic sense.
    \item [(ii)] For every $\epsilon \in (0,1]$ there exists $M_\epsilon$, depending on $\epsilon, C_*,b_1,b_2, d$ and the constants in the coronization, such that $E \in \WGLem (\mathcal{P},\epsilon, M_\epsilon)$.
    \item [(iii)] For every $\epsilon \in (0,1]$ there exists $M'_\epsilon$, depending on $\epsilon, C_*,b_1,b_2, d$ and the constants in the coronization, such that $E \in \BWGLem (\mathcal{P}, \epsilon, M'_\epsilon)$.
\end{enumerate}
\end{theorem}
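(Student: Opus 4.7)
The plan is to reduce Theorem \ref{t4.6} to direct applications of the transference principle Theorem \ref{CoronaGLtrans.thrm}, with the graph-level input supplied by Lemmas \ref{glem4goodgraph.thrm} and \ref{graphbwgl}. In all three parts the strategy is the same: verify that every $\Gamma \in \GPG(b_1,b_2)$ satisfies the geometric lemma in question with constants depending only on $d, b_1, b_2$ (and $\epsilon$, where relevant), then invoke the corresponding bullet of Theorem \ref{CoronaGLtrans.thrm} using the hypothesized coronization of $E$ with respect to $\mathcal{E} = \GPG(b_1,b_2)$ and the family $\mathcal{A} = \mathcal{P}$ of $t$-parallel hyperplanes.

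For part (i), Lemma \ref{glem4goodgraph.thrm} gives $\Gamma \in \GLem(\mathcal{P},2,2,M)$ for every $\Gamma \in \GPG(b_1,b_2)$, with $M = M(d,b_1,b_2)$. The exponents $p = q = 2$ satisfy
\[
\frac{1}{q} - \frac{1}{p} + \frac{1}{d} \;=\; \frac{1}{d} \;>\; 0,
\]
so the first bullet of Theorem \ref{CoronaGLtrans.thrm} applies and yields $E \in \GLem(\mathcal{P},2,2,M')$ with $M'$ depending on $C_*, b_1, b_2, d$, and the constants in the coronization. By Definition \ref{p-UR} this is exactly parabolic uniform rectifiability.

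For part (ii), I use that the $(2,2)$-geometric lemma for each $\Gamma \in \GPG(b_1,b_2)$ implies $\Gamma \in \WGLem(\mathcal{P},\epsilon, M_\epsilon)$ for every $\epsilon \in (0,1]$, with $M_\epsilon = M_\epsilon(d,b_1,b_2,\epsilon)$. This is the standard comparison of $\beta_\infty$ with a dilated $\beta_2$ via the lower regularity of $\Gamma$ and Chebyshev, as recorded in the paragraph preceding Lemma \ref{graphbwgl} (see also \cite[Section 2]{HLN}). Replacing $\epsilon$ by $\epsilon/C$, where $C$ is the dimensional constant in the second bullet of Theorem \ref{CoronaGLtrans.thrm}, and feeding this uniform estimate into that bullet produces $E \in \WGLem(\mathcal{P},\epsilon,M'_\epsilon)$ with the required dependence.

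Part (iii) is the only step requiring a new graph-level ingredient: Lemma \ref{graphbwgl} upgrades the weak geometric lemma just obtained to the \emph{bilateral} weak geometric lemma for each $\Gamma \in \GPG(b_1,b_2)$, by exploiting the Lip(1,1/2) graph structure to dominate the second ``sup'' in $b\beta_\mathcal{P}(Q)$ by a dilated one-sided $\beta_\mathcal{P}$. With $\Gamma \in \BWGLem(\mathcal{P},\epsilon, \tilde M_\epsilon)$ in hand uniformly in $\Gamma \in \GPG(b_1,b_2)$, the third bullet of Theorem \ref{CoronaGLtrans.thrm} transfers the bilateral property to $E$, and a final $\epsilon \mapsto \epsilon/C$ rescaling absorbs the dimensional loss. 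The main substantive step in the whole argument is the graph-level Lemma \ref{graphbwgl}; everything else is a formal application of machinery already proved.
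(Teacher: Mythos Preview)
Your proposal is correct and follows precisely the paper's own approach: the paper simply states that Theorem \ref{CoronaGLtrans.thrm} yields the result, with Lemma \ref{graphbwgl} supplying the graph-level input for part (iii), and the paragraph preceding Lemma \ref{graphbwgl} (invoking Lemma \ref{glem4goodgraph.thrm} and the standard passage from $\GLem$ to $\WGLem$) supplying the inputs for (i) and (ii). You have spelled out exactly this reasoning, including the $\epsilon \mapsto \epsilon/C$ rescaling needed to absorb the dimensional constant in the second and third bullets.
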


Specializing to the case of the Geometric Lemma, we conclude with the following, which is an immediate corollary of previously stated results.
\begin{theorem}\label{t4.16} Suppose that $E\subset \ree$, and $E \in \Reg(C_*)$, for some $C_*\geq 1$.
Then the following are equivalent:
\begin{enumerate}
    \item [(i)] $E$ is uniformly rectifiable in the parabolic sense.
      \item [(ii)] $E$ admits a coronization with respect to $\mathcal{E} = \GPG(b_1,b_2)$ for some $b_1,b_2$.
     \item [(iii)]  $E\in BP^2(\GPG(b_1,b_2))$.
\end{enumerate}
\end{theorem}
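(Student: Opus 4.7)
The plan is to establish a cyclic chain of implications (i) $\Rightarrow$ (ii) $\Rightarrow$ (iii) $\Rightarrow$ (i), with each step a direct application of a result already in hand.

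\textbf{Step 1: (i) $\Rightarrow$ (ii).} This is exactly Theorem \ref{Corona} (proved in \cite{BHHLN}): any parabolic uniformly rectifiable set $E \in \Reg(C_*) \cap \GLem(\mathcal{P},2,2,M)$ admits, for any prescribed $\eta > 0$, $K > 1$, an $(\eta,K)$-coronization with respect to $\GPG(b_1,b_2)$ for appropriate parameters $b_1, b_2$ depending quantitatively on the allowable parameters. Nothing further is needed here; I simply quote the result.

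\textbf{Step 2: (ii) $\Rightarrow$ (iii).} This is Theorem \ref{BP2parabolic.thrm}, which is itself an immediate specialization of the main result Theorem \ref{CoronaBP.thrm} to the metric measure space $(\mathbb{R}^{n+1}, d_p, H^{n-1} \times H^1)$ with $d = n+1$ and with the family $\mathcal{E} = \GPG(b_1,b_2)$. Since each element of $\GPG(b_1,b_2)$ is $d$-regular with a regularity constant depending only on $b_1, b_2, d$ (a consequence of the Lip(1,1/2) structure), the hypotheses of Theorem \ref{CoronaBP.thrm} are fulfilled and we conclude $E \in \BP^2(\GPG(b_1,b_2))$.

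\textbf{Step 3: (iii) $\Rightarrow$ (i).} This is the step I would handle by iterating Proposition \ref{geolemprop1.prop}. First I invoke Lemma \ref{glem4goodgraph.thrm} to see that every $\Gamma \in \GPG(b_1,b_2)$ lies in $\GLem(\mathcal{P},2,2,M)$ for some $M = M(d, b_1, b_2)$. Next I verify the exponent condition of Proposition \ref{geolemprop1.prop}: with $p = q = 2$ and $d = n+1 \geq 2$,
\[
\frac{1}{q} - \frac{1}{p} + \frac{1}{d} \,=\, \frac{1}{n+1} \,>\, 0.
\]
Applying Proposition \ref{geolemprop1.prop} once (with $\mathcal{E} = \GPG(b_1,b_2)$) to the inner big-pieces layer, every set in the class $\BP(\GPG(b_1,b_2))$ that appears in the definition of $\BP^2$ belongs to $\GLem(\mathcal{P},2,2,M')$, with uniform $M'$. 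Applying Proposition \ref{geolemprop1.prop} a second time to the outer big-pieces layer, now with $\mathcal{E}$ equal to this intermediate collection, yields $E \in \GLem(\mathcal{P},2,2,M'')$, i.e., $E$ is parabolic uniformly rectifiable in the sense of Definition \ref{p-UR}.

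\textbf{Main obstacle.} There is essentially no genuine obstacle: all three implications are quotations or specializations of results already proved. The only small point that requires care is the verification of the exponent condition for Proposition \ref{geolemprop1.prop} and of the uniform regularity constants along the chain, so that the two applications of the proposition can be iterated without degenerating constants; both are immediate in this setting. One could alternatively package Step 3 as a single appeal to Theorem \ref{CoronaGLtrans.thrm}(i) after noting that any set in $\BP^2(\mathcal{E})$ trivially admits a (one-stopping-time-regime, trivially coherent) coronization with respect to $\BP(\mathcal{E})$, but the direct twofold use of Proposition \ref{geolemprop1.prop} is cleaner.
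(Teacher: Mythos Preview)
Your proof is correct and follows essentially the same approach as the paper: (i) $\Rightarrow$ (ii) via Theorem \ref{Corona}, (ii) $\Rightarrow$ (iii) via Theorem \ref{BP2parabolic.thrm}, and (iii) $\Rightarrow$ (i) via Lemma \ref{glem4goodgraph.thrm} together with Proposition \ref{geolemprop1.prop} applied (twice) with $p=q=2$ and $\mathcal{A}=\mathcal{P}$. The only caveat is that your parenthetical ``alternative'' route through Theorem \ref{CoronaGLtrans.thrm}(i) is not actually valid as stated (membership in $\BP^2(\mathcal{E})$ does not obviously yield a coronization with respect to $\BP(\mathcal{E})$), but since you do not rely on it, this does not affect the argument.
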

Indeed, observe that (i) implies (ii) is Theorem \ref{Corona},  which is one of the results that will appear in our 
forthcoming paper \cite{BHHLN}; (ii) implies (iii) is Theorem \ref{BP2parabolic.thrm}; finally, the implication
(iii) implies (i) follows from Lemma \ref{glem4goodgraph.thrm},
and the stability result Proposition \ref{geolemprop1.prop}, applied with $p=q=2$, and with $\mathcal{A}=\mathcal{P}$,
the collection of all
hyperplanes parallel to the $t$-axis.

We conclude this section with a corollary concerning singular integral operators on parabolic uniformly rectifiable sets.
Let $d=n+1$ denote the parabolic homogeneous dimension of $\rn$.
Given a positive integer $N$, we shall say that a singular kernel $K$ satisfies ($d$-dimensional)
``$C$-$Z(N)$" estimates if $K \in C^N(\ree\setminus \{0\})$ and
\[
|\nabla_X^j\,\partial_t^kK(X,t)| \,\leq\, C_{j,k}\, \|(X,t)\|^{-d-j-2k} \,, \quad \forall \, 0\leq j+k\leq N\,.
\]

\begin{corollary}\label{c4.17} Suppose that
 $E\subset \ree$ is uniformly rectifiable in the parabolic sense.  Let 
 $K\in C^N(\ree\setminus \{0\})$ be a $C$-$Z(N)$ kernel.
Assume further that $K(X,t)$ is odd in $X$, for each fixed $t$, i.e., $K(X,t) = - K(-X,t)$,
 for every $X\in \rn, t\in \re$.  For each $\eps>0$, define the truncated SIO
 \[T_\eps f(X,t):= \iint_{\|(X-Y,t-s)\|>\eps} K(X-Y,t-s)\, f(Y,s)\, dY ds\,.\]
Then for $N$ sufficiently large, we have the uniform $L^2$ bound
 \begin{equation}\label{eq4.18}
 \sup_{\eps>0} \|T_\eps f\|_{L^2(E)} \leq\, C\|f\|_{L^2(E)}\,,
 \end{equation}
 where $C$ depends only on $K$, $n$, and the constants in the parabolic uniformly rectifiable and ADR conditions
 for $E$.
 \end{corollary}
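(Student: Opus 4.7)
The plan is to combine the geometric characterization in Theorem \ref{t4.16} with the known $L^2$ theory of odd parabolic singular integrals on regular parabolic graphs, connected by a transfer principle for odd Calderón--Zygmund kernels through the big pieces functor. By Theorem \ref{t4.16}, parabolic uniform rectifiability of $E$ is equivalent to the existence of parameters $b_1,b_2$, controlled by $C_\ast$, dimension, and the Carleson constant of $E$, such that $E \in \BP^2(\GPG(b_1,b_2))$. Thus the problem reduces to two independent ingredients: (a) uniform $L^2$ bounds for $T_\eps$ on every $\Gamma \in \GPG(b_1,b_2)$, and (b) a mechanism that passes such bounds through one level of $\BP$ at a time.

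For (a), on a regular parabolic Lip(1,1/2) graph $\Gamma_\psi$, parameterized by $(x,t)\mapsto (x,\psi(x,t),t)$, odd parabolic $C$-$Z(N)$ operators are uniformly $L^2$-bounded (in the truncation parameter). For parabolic Cauchy-type kernels this is proved in \cite{H,HL,LM}; the general odd $C$-$Z(N)$ case follows from a parabolic $Tb$ argument, where the Carleson measure estimate needed to run $Tb$ is supplied by the BMO bound on $D_{1/2}^t\psi$ built into the definition of $\GPG(b_1,b_2)$. The output is that every $\Gamma \in \GPG(b_1,b_2)$ admits uniform $L^2$ bounds for the family of odd $C$-$Z(N)$ kernels, with constants depending only on $K$, $n$, $b_1,b_2$, and the ADR constant.

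For (b), I would invoke the classical transfer principle for odd Calderón--Zygmund kernels: if $F$ is $d$-ADR (relative to $d_p$ and $\mu$) and $F\in \BP(\mathcal{F})$, where every member of $\mathcal{F}$ is a $d$-ADR set on which all odd $C$-$Z(N)$ kernels give uniform $L^2$ bounds, then $F$ inherits the same uniform bounds. The proof in the Euclidean elliptic setting (see, e.g., \cite{DS2}) is a good-$\lambda$ inequality: for each ball $B(x,r)$ one selects a large piece $\Gamma\cap F\cap B(x,r)$, writes $T_\eps^F = (T_\eps^F-T_\eps^\Gamma)+T_\eps^\Gamma$ on that piece, and uses oddness of $K$ together with the smoothness of $K$ and the ADR structure to control the difference pointwise by the parabolic Hardy--Littlewood maximal function of $|f|$. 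Nothing in this argument is specific to the isotropic Euclidean metric; it needs only the $d$-ADR dyadic cubes of Lemma \ref{lemmaCh} relative to $d_p$, boundedness of the parabolic maximal function on $L^2(\mu)$, and the oddness of $K$. Applying this principle twice (from $\GPG(b_1,b_2)$ to any $\widetilde F \in \BP(\GPG(b_1,b_2))$, and then from these sets to $E$ itself, using $E\in \BP^2(\GPG(b_1,b_2))$) yields \eqref{eq4.18}.

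The main technical point is the verification that David's good-$\lambda$ transfer principle carries over to the parabolic metric--measure setting, which is essentially bookkeeping but requires one to track the smoothness threshold $N$ carefully: the number of derivatives used in the difference estimate for $T_\eps^F-T_\eps^\Gamma$ dictates how large $N$ must be chosen in the $C$-$Z(N)$ hypothesis. A secondary point is to confirm that the parabolic $Tb$ ingredient in step (a) applies uniformly to the full class of odd $C$-$Z(N)$ kernels, not just the parabolic Cauchy kernels explicitly treated in the references; this is standard once one has the BMO control on $D_{1/2}^t\psi$ and the Coifman--McIntosh--Meyer type machinery in the parabolic setting.
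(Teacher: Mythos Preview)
Your proposal is correct and follows essentially the same route as the paper: reduce to $\BP^2(\GPG)$ via Theorem \ref{t4.16}, invoke the graph case from \cite{H}, and pass $L^2$ bounds up through two layers of big pieces using David's good-$\lambda$/Cotlar machinery (the paper cites \cite[Proposition III.3.2]{D4} rather than \cite{DS2} for this step). The paper likewise flags that \cite{H} treats only the parabolically homogeneous kernels and that the general odd $C$-$Z(N)$ case requires an adaptation, deferring details to \cite{BHHLN}; your ``secondary point'' is exactly this caveat.
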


By the method of Guy David (and Cotlar's inequality for maximal singular integrals, see \cite[Proposition III.3.2]{D4}), 
the $L^2$ bounds in \eqref{eq4.18} are stable under the ``big pieces functor".  Thus, by Theorem \ref{t4.16}, the conclusion of the corollary is reduced to the case that $E$ is a Good Parabolic Graph in the sense of Definition \ref{defgpg}.
In turn, the latter case follows essentially from  \cite{H} (using the method of \cite{CDM}).  
The results in \cite{H} apply directly only to the case that $K$ satisfies the parabolic homogeneity condition
 \[K(\rho X,\rho^2 t) = \rho^{-d} K(X,t)\,,\quad \forall\, \rho >0\,,\]
 but in fact the arguments in \cite{H} may be adapted to treat the non-homogeneous case as well.
 Details will appear in \cite{BHHLN}.

To conclude this section we make an observation that draws a contrast between the study of uniformly rectifiable sets and sets which are uniformly rectifiable in the {\it parabolic} sense.

\begin{observation}
Let $n \ge 2$. There exists a Lip(1,1/2) graph $\Gamma$ in $\re^{n+1}$ such that for every $\epsilon > 0$, $\Gamma \in  \BWGLem (\mathcal{P}, \epsilon, M'_\epsilon)$ with $M'_\epsilon$ depending on $\epsilon$, but $\Gamma$ is not uniformly rectifiable in the parabolic sense. 
\end{observation}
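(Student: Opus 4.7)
By Lemma \ref{glem4goodgraph.thrm}, a Lip(1,1/2) graph $\Gamma_\psi$ is uniformly rectifiable in the parabolic sense if and only if $D^t_{1/2}\psi \in BMO(\mathbb R^n)$. Hence my plan is to produce a Lip(1,1/2) function $\psi\colon\mathbb R^{n-1}\times \mathbb R \to \mathbb R$ with $D^t_{1/2}\psi \notin BMO(\mathbb R^n)$ whose graph nonetheless lies in $\BWGLem(\mathcal P, \epsilon, M'_\epsilon)$ for every $\epsilon > 0$. The existence of a Lip(1,1/2) function with $D^t_{1/2}\psi \notin BMO(\mathbb R^n)$ is the content of \cite{LS}; a convenient realisation is a lacunary sum $\psi = \sum_k \psi_k$ with each $\psi_k$ smooth, localised at parabolic scale $2^{-k}$, and arranged on essentially disjoint spatial supports so that the $BMO$ seminorms of $D^t_{1/2}\psi_k$ accumulate unboundedly on a nested sequence of parabolic cubes.

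By Lemma \ref{graphbwgl}, to prove BWGL at every $\epsilon > 0$ it suffices to verify the unilateral weak geometric lemma $\Gamma_\psi \in \WGLem(\mathcal P, \epsilon, M_\epsilon)$ for each such $\epsilon$. For a parabolic cube $Q$ of side $r = \ell(Q)$ centered at $(X_Q,t_Q)$, the spatial linearisation of $\psi$ at $(X_Q,t_Q)$ furnishes a candidate plane $P_Q \in \mathcal P$ for which
\[ \beta_{\mathcal P}(Q) \,\lesssim\, \frac{1}{r}\sup_{(y,s)\in KQ}\bigl|\psi(y,s)-\psi(X_Q,t_Q)-\ell_Q(y-X_Q)\bigr|, \]
with $K$ a universal dilation. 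In the lacunary decomposition, each $\psi_k$ contributes oscillation of order $\epsilon_k r$ on cubes of scale $r \approx 2^{-k}$ meeting its support (with $\epsilon_k \to 0$), and essentially nothing at scales $r \ll 2^{-k}$ after subtracting the linear correction. Consequently for each fixed $\epsilon$ only the finitely many lacunary levels $k \le K(\epsilon)$ with $\epsilon_k > \epsilon$ can produce $\beta_{\mathcal P}(Q) > \epsilon$, and at each such level the support geometry of $\psi_k$ constrains the relevant cubes inside any test cube $R$ to have total $\mu$-measure controlled by $\mu(R)$. Summing these finitely many contributions gives the Carleson bound $M_\epsilon$.

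The main obstacle is the quantitative balance demanded by the two requirements. By Lemma \ref{glem4goodgraph.thrm}, $D^t_{1/2}\psi \notin BMO$ is equivalent to divergence of $\sum_Q \beta_{\mathcal P}(Q)^2 \mu(Q)/\mu(R)$ for some $R$, which via the layer-cake formula forces the $M'_\epsilon$ in BWGL to grow so rapidly that $\int_0^1 \epsilon\, M'_\epsilon\, d\epsilon = \infty$; finite $M'_\epsilon$ at each $\epsilon > 0$ is sustainable only through a rapidly diverging rate. The lacunary sequence $\{\epsilon_k\}$ must therefore be tuned so that $\sum_k\epsilon_k^2 = \infty$ (forcing $D^t_{1/2}\psi \notin BMO$) while $\epsilon_k \to 0$ (ensuring that for each $\epsilon > 0$ only finitely many levels are relevant, hence BWGL at every $\epsilon$). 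A calibration such as $\epsilon_k \asymp (\log k)^{-1/2}$ meets both requirements, and the Lewis--Silver example admits such a tuning; the remaining technical work is to confirm that the spatial supports of the $\psi_k$ deliver the claimed level-$k$ Carleson bound at each scale.
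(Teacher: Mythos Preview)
Your overall strategy is sound---find a Lip(1,1/2) function $\psi$ with $D^t_{1/2}\psi\notin BMO$ whose graph nonetheless satisfies BWGL at every $\epsilon$---and your reduction of BWGL to WGL via Lemma~\ref{graphbwgl} is legitimate. However, your proposed construction is both more elaborate than necessary and contains an internal tension: you want the $\psi_k$ to have \emph{disjoint spatial supports}, yet you also want the BMO seminorms of $D^t_{1/2}\psi_k$ to \emph{accumulate on a nested sequence of cubes}. With genuinely disjoint spatial supports and $D^t_{1/2}$ acting in $t$ for each fixed $x$, the BMO norm of $D^t_{1/2}\psi$ restricted to cubes inside a single $U_k$ is governed by $\psi_k$ alone, so the natural quantity is $\sup_k\|D^t_{1/2}\psi_k\|_*$ rather than $\sum_k\epsilon_k^2$; getting divergence then forces $\epsilon_k\not\to 0$, contradicting your BWGL mechanism. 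A nested (Weierstrass-type) arrangement would make $\sum_k\epsilon_k^2$ relevant, but then your ``disjoint supports'' picture and the ensuing level-by-level Carleson count need to be reworked. As written, the construction is not yet coherent.

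The paper's argument avoids all of this by using the explicit example from \cite{HLN} (built on \cite{LS}), which has the product form $\psi(x,t)=\varphi(x)\,f(t)$ with $\varphi$ smooth and compactly supported and $f$ having modulus of continuity $\omega(\tau)\le C\min\{(\tau/\log(1/\tau))^{1/2},1\}$. From this one obtains a \emph{pointwise} bound
\[
b\beta_{\mathcal P}(Q)\,\le\, C_n\min\bigl\{(\log(1/\ell(Q)))^{-1/2},\ \ell(Q)^{-1}\bigr\}
\]
for every $Q\in\mathbb D$, by comparing to the horizontal plane through the cube center. Hence $b\beta_{\mathcal P}(Q)>\epsilon$ forces $\ell(Q)\in[e^{-(C_n/\epsilon)^2},\,C_n/\epsilon]$, i.e., the offending cubes live in only finitely many dyadic generations, and the packing condition is immediate. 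This replaces your lacunary bookkeeping and tuning of $\epsilon_k$ with a single modulus-of-continuity estimate; the ``only finitely many levels matter'' conclusion you were aiming for falls out directly, and no new construction is needed.
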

This observation seems to suggest that there may be no `useful' Carleson {\it set} conditions which characterize sets which are uniformly rectifiable in the {\it parabolic} sense (there are many such characterizations of uniformly rectifiable sets). To make the observation, we simply use the Lip(1,1/2) graph constructed at the end of \cite{HLN}, which is not uniformly rectifiable in the parabolic sense (and relies on the work of Lewis and Silver \cite{LS}). The function defining the graph is a product of a smooth compactly supported function of the spatial variables and a function of $t$.
The modulus of continuity of the function of $t$ is bounded by $\omega(\tau) = C\min\{(\tau/\log(1/\tau))^{1/2}, 1\}$. 
From this information we make the estimate 
\[b\beta_{\mathcal{P}}(Q) \le C_n \min\{(\log(1/\ell(Q))^{-1/2}, \ell(Q)^{-1}\}, \quad \forall Q \in \mathbb{D}.\]
Therefore if $b\beta_{\mathcal{P}}(Q) > \epsilon > 0$ then it must be the case that $\ell(Q) \in [e^{-(C_n/\epsilon)^2}, C_n/\epsilon]$. Thus, for any fixed $\epsilon > 0$, the collection of cubes $Q \in \mathbb{D}$ such that $b\beta_{\mathcal{P}}(Q) > \epsilon$ is a subset of a \underline{finite} collection of \underline{generations} of the dyadic lattice $\mathbb{D}$. Using this fact, for any fixed $\epsilon > 0$ the packing condition for the collection $\{Q \in \mathbb{D}: b\beta_{\mathcal{P}}(Q) > \epsilon\}$ in Definition \ref{BWGL.def} holds. 
\appendix
\section{Proofs of  Propositions \ref{geolemprop1.prop}-\ref{geolemprop3.prop}}\label{geolem.app}

We here prove Propositions \ref{geolemprop1.prop}, \ref{geolemprop2.prop} and \ref{geolemprop3.prop} concerning the stability of
various `geometric lemmas' in this general setting. As previously stated, we claim modest originality  but the propositions are used in our application to parabolic uniform rectifiability in the previous sections and the propositions have previously not occurred in the literature in that context. While the proofs of the propositions follow almost exactly those
in \cite{DS2,Rigot}, a difference is that we work with the dyadic versions of the $\beta$'s and $I$'s, rather than
the continuous parameters versions.  In the generality in which we work here, it is not clear to us whether
the continuous parameter $\beta$'s are necessarily measurable.

As the proofs of Propositions \ref{geolemprop1.prop}, \ref{geolemprop2.prop} and \ref{geolemprop3.prop}  all start the same we start out by proving them simultaneously. Recall that we are assuming $E \in \BP(\mathcal{E})$ where $\mathcal{E}$ is a collection of
regular sets satisfying a particular geometric lemma (depending on which proposition we are proving).

Fix $R \in \mathbb{D}(E)$ and let $\widetilde{E} \in \mathcal{E}$ be such that $\mu(\widetilde{E} \cap R) \ge c\theta$ (see Lemma \ref{cubesenoughbp.lem}). Suppose that $Q \subseteq R$ and $Q \cap \widetilde{E} \neq \emptyset$. Then there exists $\widetilde{Q} =\widetilde{Q}(Q) \in  \dd(\widetilde{E})$ such that $\diam(\widetilde{Q}) \ge 10 \diam(Q)$, $Q \cap \tilde{Q} \neq \emptyset$, and
\[\diam(\widetilde{Q}) \le C_2\diam(Q),\]
for some constant $C_2$ depending only on the $d$-regularity constant and dimension. For every such cube $Q$ we choose one such $\widetilde{Q}$. Note that by regularity of $E$ for fixed $Q' \in \mathbb{D}(\widetilde{E})$,
\begin{equation}\label{cantbechosentoomany.eq} \# \{Q \in \mathbb{D}(E): 10 \diam(Q)\le \diam(Q') \le C_2 \diam(Q) \} < L,
\end{equation}
where $L$ depends only on the $d$-regularity of $E$ and dimension.  Let $\tilde{\delta}(y) = \dist(y, \widetilde{E})$ and $E_1 = E \setminus \widetilde{E}$.

 In the following we will for simplicity write ${\beta}_q, {\beta}, b{\beta}$ for ${\beta}_{\mathcal{A},q}, {\beta}_{\mathcal{A}}, b{\beta}_{\mathcal{A}}$. In analogy we let $\tilde{\beta}_q, \tilde{\beta}, b\tilde{\beta}$ denote the $\beta$'s defined with respect to $\tilde{E}$

We next state five lemmas, Lemmas \ref{ds1.12JN.eq}-\ref{DSlem1.37}.  Lemma \ref{geoclaim1.cl}, Lemma \ref{geoclaim2.cl} and  Lemma \ref{geoclaim3.cl} pertain to  Propositions \ref{geolemprop1.prop}, \ref{geolemprop2.prop} and \ref{geolemprop3.prop}, respectively. We postpone the proofs of  Lemma \ref{ds1.12JN.eq}-\ref{DSlem1.37} for now to completed the proof of  Propositions \ref{geolemprop1.prop}, \ref{geolemprop2.prop} and \ref{geolemprop3.prop}. The proofs of the lemmas given at the end of the section.

\begin{lemma}[{\cite[Lemma IV.1.12]{DS2}}]\label{ds1.12JN.eq}
Let $E \in \Reg(C_*)$ and $\alpha: \dd(E) \to [0,\infty)$. Suppose that there exists $N > 0$ and $\eta > 0$ such that
\[\mu\left(\left\{x \in R: \sum_{\substack{Q \ni x,\ Q \subseteq R}}\alpha(Q) \le N \right\}\right) \ge \eta \mu(R), \quad \forall R \in \mathbb{D}.\]
Then there exists $C= C(C_*,d, N, \eta)$ such that
\[\sum_{Q \subseteq R} \alpha(Q)\mu(Q) \le C\mu(R),\quad \forall R \in \mathbb{D}.\]
\end{lemma}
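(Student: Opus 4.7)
My plan is to run a self-improving (corona-extrapolation) argument on the quantity
$$B := \sup_{R \in \mathbb{D}(E)} \frac{1}{\mu(R)} \sum_{Q \subseteq R} \alpha(Q) \mu(Q),$$
showing $B \leq 4N/\eta^2$. To avoid circular reasoning when $B$ might be infinite, I will first run the argument for the truncated functional $\alpha_k(Q) := \alpha(Q) \mathbf{1}_{\ell(Q) \geq 2^{-k}}$ (for which the corresponding $B_k$ is automatically finite on each cube $R$) and then let $k\to\infty$ by monotone convergence, using that the hypothesis persists for $\alpha_k$ since $\alpha_k \leq \alpha$.

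Fix $R$ and set $G := \{x \in R : \sum_{Q \ni x,\, Q \subseteq R} \alpha(Q) \leq N\}$, so that $\mu(G) \geq \eta \mu(R)$. Fubini's theorem then yields the key global bound
$$\sum_{Q \subseteq R} \alpha(Q)\, \mu(Q \cap G) = \int_G \sum_{Q \ni x,\, Q \subseteq R} \alpha(Q)\, d\mu(x) \leq N \mu(R).$$
To turn $\mu(Q \cap G)$ into $\mu(Q)$ on a large family of cubes, I perform a stopping time: let $\mathcal{F} = \{R_j\}$ be the collection of maximal dyadic subcubes $R_j \subsetneq R$ satisfying $\mu(R_j \cap G) < \tfrac{\eta}{2} \mu(R_j)$. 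Since $\mu(R \cap G) \geq \eta \mu(R)$, the cube $R$ itself is not stopping, so the $R_j$ are proper subcubes.

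I then split $\sum_{Q \subseteq R} \alpha(Q)\mu(Q)$ into an ``upper'' part indexed by $\mathbb{D}_R \setminus \bigcup_j \mathbb{D}_{R_j}$ and a ``lower'' part indexed by $\bigcup_j \mathbb{D}_{R_j}$. For the upper part, any such $Q$ either strictly contains some $R_j$ (and then maximality of the $R_j$ forces $\mu(Q\cap G) \geq \tfrac{\eta}{2} \mu(Q)$) or is disjoint from $\bigcup_j R_j$ (in which case $Q$ itself did not trigger stopping, so again $\mu(Q\cap G) \geq \tfrac{\eta}{2}\mu(Q)$). Consequently
$$\sum_{Q \in \mathbb{D}_R \setminus \cup_j \mathbb{D}_{R_j}} \alpha(Q) \mu(Q) \,\leq\, \frac{2}{\eta} \sum_{Q \subseteq R} \alpha(Q)\, \mu(Q \cap G) \,\leq\, \frac{2N}{\eta}\, \mu(R).$$
For the lower part, I invoke the (a priori finite, truncated) bound $B$ on each $R_j$:
$$\sum_j \sum_{Q \subseteq R_j} \alpha(Q)\mu(Q) \,\leq\, B \sum_j \mu(R_j) \,=\, B\, \mu\!\left(\bigcup_j R_j\right).$$

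Finally I need to control $\mu(\bigcup_j R_j)$ by a strict fraction of $\mu(R)$. Since the $R_j$ are disjoint and each satisfies $\mu(R_j \cap G) < \tfrac{\eta}{2}\mu(R_j)$, we get $\mu(G \cap \bigcup_j R_j) < \tfrac{\eta}{2} \mu(R)$, so $\mu(G \setminus \bigcup_j R_j) \geq \tfrac{\eta}{2} \mu(R)$, and hence $\mu(\bigcup_j R_j) \leq (1 - \tfrac{\eta}{2}) \mu(R)$. Combining the two parts and dividing by $\mu(R)$ gives
$$B \,\leq\, \frac{2N}{\eta} + \Big(1 - \frac{\eta}{2}\Big) B,$$
which self-improves to $B \leq 4N/\eta^2$. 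Passing to the limit in the truncation yields the desired constant $C = C(N,\eta)$, which in fact does not depend on $C_*$ or $d$ — these parameters enter only implicitly through the dyadic grid $\mathbb{D}(E)$. The only delicate point is the justification of the a priori finiteness, but the truncation reduces this to a routine monotone convergence argument.
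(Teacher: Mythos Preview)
Your argument is correct and is precisely the John--Nirenberg self-improvement scheme that the paper invokes (the paper gives no proof, only citing \cite[Lemma IV.1.12]{DS2} and noting that the argument transfers verbatim). One small point deserves more care: your truncation $\alpha_k$ makes each individual sum $\sum_{Q\subseteq R}\alpha_k(Q)\mu(Q)$ finite, but it does not by itself force the supremum $B_k=\sup_R(\cdots)$ to be finite, which is what you need in order to subtract $(1-\eta/2)B_k$ legitimately. The standard fix is to localize: for a fixed top cube $R_0$, set $B_k^{(R_0)}:=\sup_{R\subseteq R_0}\mu(R)^{-1}\sum_{Q\subseteq R}\alpha_k(Q)\mu(Q)$, which \emph{is} finite (only finitely many cubes $Q\subseteq R_0$ with $\ell(Q)\geq 2^{-k}$, and $\mu(R)\gtrsim 2^{-kd}$ whenever the sum is nonempty). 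Your stopping-time splitting then gives $B_k^{(R_0)}\leq 2N/\eta+(1-\eta/2)B_k^{(R_0)}$, hence $B_k^{(R_0)}\leq 4N/\eta^2$ uniformly in $R_0$ and $k$, and monotone convergence finishes. Alternatively, one can bypass $B$ entirely by iterating the stopping-time construction inside each $R_j$ and summing the resulting geometric series.
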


\begin{lemma}\label{geoclaim1.cl} For $q \in (0,\infty)$ if $Q \cap \widetilde{E} \neq \emptyset$
\[\beta_q(Q) \le C(q)\left[ \tilde{\beta}_q(\widetilde{Q}) + {I}_q(Q) \right],\]
where
\[{I}_q(Q) = \left(\mu(Q)^{-1} \int_{\substack{2Q \cap E_1 \\\tilde{\delta}(y) < 2\diam(Q)}}  [\tilde{\delta}(y)(\diam(Q))^{-1}]^q \, d\mu(y) \right)^{1/q}. \]
\end{lemma}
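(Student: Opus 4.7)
The plan is to compare the two $\beta_q$-numbers directly, after selecting a common test set $A\in\mathcal{A}$. Fix $\epsilon>0$ and pick $A\in\mathcal{A}$ with
\[
\Big(\mu(\widetilde Q)^{-1}\int_{2\widetilde Q}\bigl[\diam(\widetilde Q)^{-1}\dist(z,A)\bigr]^{q}\,d\mu(z)\Big)^{1/q}\le \tilde{\beta}_q(\widetilde Q)+\epsilon.
\]
Since $\beta_q(Q)^q$ is the infimum over all choices of approximating set, it suffices to bound $\mu(Q)^{-1}\diam(Q)^{-q}\int_{2Q}\dist(y,A)^q\,d\mu(y)$ by $C(q)[\tilde{\beta}_q(\widetilde Q)^q+I_q(Q)^q]$ (up to sending $\epsilon\downarrow 0$). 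The first key geometric observation is the inclusion $2Q\cap\widetilde E\subseteq 2\widetilde Q$: since $Q\cap\widetilde Q\neq\emptyset$ and $\diam(\widetilde Q)\ge 10\diam(Q)$, any $y\in 2Q$ satisfies $\dist(y,\widetilde Q)\le 2\diam(Q)\le \diam(\widetilde Q)$, so $y\in 2\widetilde Q$ whenever $y\in\widetilde E$.

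Splitting $\int_{2Q}=\int_{2Q\cap\widetilde E}+\int_{2Q\cap E_1}$, the first integral is at most $\int_{2\widetilde Q}\dist(y,A)^q\,d\mu(y)=(\tilde{\beta}_q(\widetilde Q)+\epsilon)^q\mu(\widetilde Q)\diam(\widetilde Q)^q$. Using $\mu(\widetilde Q)\approx\mu(Q)$ and $\diam(\widetilde Q)\approx\diam(Q)$ (both with constants depending only on $d$, $C_*$, and $C_2$), this gives a clean $C(q)\tilde{\beta}_q(\widetilde Q)^q$ contribution after normalization. For the second integral, fix a measurable selection $\pi:2Q\cap E_1\to\widetilde E$ with $\dist(y,\pi(y))=\tilde\delta(y)$, which exists because $\widetilde E$ is closed and $(X,\dist)$ is a metric space. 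The triangle inequality yields
\[
\dist(y,A)^q\le C_q\bigl[\tilde\delta(y)^q+\dist(\pi(y),A)^q\bigr].
\]
Integrating the $\tilde\delta(y)^q$ term produces precisely the $I_q(Q)^q$ contribution, noting that the restriction $\tilde\delta(y)<2\diam(Q)$ in the definition of $I_q(Q)$ is automatic: for any $y\in 2Q$ and any $z\in Q\cap\widetilde E$ one has $\tilde\delta(y)\le\dist(y,z)\le 2\diam(Q)$.

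The remaining task is to bound $\int_{2Q\cap E_1}\dist(\pi(y),A)^q\,d\mu(y)$ by $\int_{2\widetilde Q}\dist(z,A)^q\,d\mu(z)$ plus terms already controlled by $I_q(Q)^q$. Here I would use a standard averaging inequality: from the $d$-regularity of $\widetilde E$ and a two-case split (according to whether $\dist(\pi(y),A)$ is $\le\tilde\delta(y)$ or not), one shows
\[
\dist(\pi(y),A)^q\le C_q\,\tilde\delta(y)^q+C_q\,\tilde\delta(y)^{-d}\int_{B(\pi(y),\tilde\delta(y)/2)\cap\widetilde E}\dist(z,A)^q\,d\mu(z).
\]
Combined with a Whitney-type decomposition of $2Q\cap E_1$ by dyadic scales of $\tilde\delta$ and a covering of the projections $\pi(y)$ at each scale by a bounded-overlap collection of balls in $\widetilde E$, this reduces the estimate to integrals of $\dist(\cdot,A)^q$ over enlargements of $2\widetilde Q$, controlled by $\int_{2\widetilde Q}\dist(z,A)^q\,d\mu(z)$ up to the diameter comparison.

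The main obstacle is the bookkeeping in the Fubini/covering step. At each $\tilde\delta$-scale the multiplicity function on $\widetilde E$ is uniformly bounded by the regularity constants of $E$ and $\widetilde E$, but naively summing over scales can fail to converge when $E_1$ approaches $\widetilde E$ along many scales near the same point. The key point that makes the bound go through is that any excess cross-scale multiplicity forces $E_1$ to carry substantial mass at many scales near a common point of $\widetilde E$, and this excess is precisely what is measured by $\int_{2Q\cap E_1}\tilde\delta(y)^q\,d\mu(y)=I_q(Q)^q\mu(Q)\diam(Q)^q$. Absorbing the excess into the $I_q(Q)^q$ term completes the estimate and yields the desired inequality.
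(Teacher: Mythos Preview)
Your overall architecture matches the paper's: choose a near-optimal $A$ for $\tilde\beta_q(\widetilde Q)$, split $2Q$ into $2Q\cap\widetilde E$ and $2Q\cap E_1$, handle the first piece directly via $2Q\cap\widetilde E\subset 2\widetilde Q$, and for the second piece use the pointwise bound $\dist(y,A)^q\le C_q\tilde\delta(y)^q+C_q\tilde\delta(y)^{-d}\int_{B\cap\widetilde E}\dist(z,A)^q\,d\mu(z)$ followed by Fubini. The divergence, and the gap, is in your last paragraph.

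Your claim that the cross-scale excess multiplicity ``is precisely what is measured by $\int_{2Q\cap E_1}\tilde\delta(y)^q\,d\mu(y)$'' is not correct. After Fubini the quantity to control is $\int_{2\widetilde Q} N(z)\,\dist(z,A)^q\,d\mu(z)$, where $N(z)=\int_{\{y:\,z\in B(\pi(y),\tilde\delta(y)/2)\}}\tilde\delta(y)^{-d}\,d\mu(y)$. At each fixed dyadic scale $r_k\approx 2^{-k}\diam Q$ the contribution to $N(z)$ is $\lesssim 1$, but summing over scales gives $N(z)$ as large as the number of scales at which $E_1$ carries mass near $z$. On the other hand, the contribution of scale $k$ to $\int_{2Q\cap E_1}\tilde\delta^q\,d\mu$ is $\lesssim r_k^{d+q}$, a convergent geometric series; hence $I_q(Q)$ can remain bounded while $N$ is arbitrarily large on a set of positive measure in $\widetilde E$. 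Since the excess factor $N(z)$ is multiplied by $\dist(z,A)^q$, which has nothing to do with $\tilde\delta$, there is no mechanism for absorbing it into $I_q(Q)^q$.

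The paper closes this gap by averaging not over the full ball $B(\pi(y),\tilde\delta(y)/2)\cap\widetilde E$ but over a ``good set'' $G(y)\subset B(y,2\tilde\delta(y))\cap\widetilde E$ on which the multiplicity function $M(z,2\tilde\delta(y)):=\int_{\{w\in E_1:\,\tilde\delta(w)\le 2\tilde\delta(y),\,\dist(z,w)\le 2\tilde\delta(w)\}}\tilde\delta(w)^{-d}\,d\mu(w)$ is at most a fixed constant $K_1$. A Fubini computation shows $\int_{B(u,s)\cap\widetilde E}M(z,s)\,d\mu(z)\lesssim s^d$, so Chebyshev gives $\mu(G(y))\gtrsim\tilde\delta(y)^d$ and your averaging inequality remains valid with $G(y)$ in place of the ball. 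After Fubini the inner integral is then $\le K_1$ by a maximal-scale selection (choose $y_0$ near-maximizing $\tilde\delta$ among $y$ with $z\in G(y)$; then the whole set is contained in $F(z,2\tilde\delta(y_0))$, whose integral is $M(z,2\tilde\delta(y_0))\le K_1$). This yields the bound by $\tilde\beta_q(\widetilde Q)$ alone for the averaged term, with no further appeal to $I_q$.
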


\begin{lemma}\label{geoclaim2.cl} If $Q \cap \widetilde{E} \neq \emptyset$
\[\beta(Q) \le C_u\left[ \tilde{\beta}(\widetilde{Q}) + I_\infty(Q) \right],\]
where
\[{I}_\infty(Q) = \sup_{ \substack{y \in 2Q \cap E_1 \\ \tilde{\delta}(y) < 2\diam(Q) }}[\tilde{\delta}(y)(\diam(Q))^{-1}]. \]
\end{lemma}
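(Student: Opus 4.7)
The plan is to reduce the statement to estimating $\sup_{y \in 2Q} \dist(y, A)$ for a single well-chosen competitor $A \in \mathcal{A}$. Fix $\varepsilon > 0$ and choose $A \in \mathcal{A}$ that nearly achieves the infimum defining $\tilde{\beta}(\widetilde{Q})$, so that $\sup_{z \in 2\widetilde{Q}} \dist(z,A) \le (\tilde{\beta}(\widetilde{Q}) + \varepsilon)\diam(\widetilde{Q})$. Since $\diam(\widetilde{Q}) \le C_2 \diam(Q)$, once we control $\dist(y,A)/\diam(Q)$ uniformly for $y \in 2Q$ by $C_u[\tilde{\beta}(\widetilde{Q}) + \varepsilon + I_\infty(Q)]$, letting $\varepsilon \to 0$ yields the desired bound on $\beta(Q)$.

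To control $\dist(y,A)$ I would split $2Q = (2Q \cap \widetilde{E}) \cup (2Q \cap E_1)$ and run a standard triangle inequality argument in each case. First, if $y \in 2Q \cap \widetilde{E}$, pick any $z_0 \in Q \cap \widetilde{Q}$ (nonempty by assumption); then $\dist(y,z_0) \le \diam(Q) + \diam(Q) = 2\diam(Q) \le \diam(\widetilde{Q})$, placing $y$ in $2\widetilde{Q}$ (note $y \in \widetilde{E}$), so $\dist(y,A) \le (\tilde{\beta}(\widetilde{Q}) + \varepsilon)\diam(\widetilde{Q}) \le C_2 (\tilde{\beta}(\widetilde{Q}) + \varepsilon)\diam(Q)$.

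Second, if $y \in 2Q \cap E_1$, first observe that $\tilde{\delta}(y) \le 2 \diam(Q)$ (because $z_0 \in \widetilde{E}$ lies within $2\diam(Q)$ of $y$), which places $y$ in the set where $I_\infty(Q)$ takes the supremum, so $\tilde{\delta}(y) \le I_\infty(Q) \diam(Q)$. Since $\widetilde{E}$ is closed, pick $z \in \widetilde{E}$ realizing $\dist(y,z) = \tilde{\delta}(y)$. By the triangle inequality $\dist(z,z_0) \le \tilde{\delta}(y) + 2\diam(Q) \le 4\diam(Q) \le \diam(\widetilde{Q})$, so $z \in 2\widetilde{Q}$. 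Therefore
\[
\dist(y,A) \le \dist(y,z) + \dist(z,A) \le I_\infty(Q)\diam(Q) + C_2(\tilde{\beta}(\widetilde{Q})+\varepsilon)\diam(Q).
\]

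Combining the two cases, taking the supremum over $y \in 2Q$, dividing by $\diam(Q)$, and sending $\varepsilon \to 0$ gives $\beta(Q) \le C_u[\tilde{\beta}(\widetilde{Q}) + I_\infty(Q)]$ with $C_u = C_2 + 1$. There is no serious obstacle here; this is essentially the $L^\infty$ analogue of the argument proving Lemma \ref{geoclaim1.cl}, with the integral version of the error term replaced by its pointwise supremum, and the only geometric input needed beyond the triangle inequality is the comparability $\diam(\widetilde{Q}) \approx \diam(Q)$ together with $\widetilde{Q} \cap Q \neq \emptyset$.
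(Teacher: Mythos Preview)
Your argument is correct and is precisely the approach the paper has in mind: the paper omits the proof and instructs the reader to mimic the proof of Lemma~\ref{geoclaim3.cl} up to estimate~\eqref{eq-bwgl-inequality}, which is exactly your triangle-inequality computation (choose a near-optimal $A\in\mathcal{A}$ for $\tilde\beta(\widetilde{Q})$, bound $\dist(y,A)$ by $\tilde\delta(y)+\dist(\widetilde y,A)$ for a nearby $\widetilde y\in 2\widetilde{Q}$, and use $\diam(\widetilde{Q})\approx\diam(Q)$). One small technical point: in a general metric space a closed set need not contain a nearest point, so instead of ``pick $z\in\widetilde E$ realizing $\dist(y,z)=\tilde\delta(y)$'' you should take $z$ with $\dist(y,z)\le\tilde\delta(y)+\eta$ and absorb the extra $\eta$ when you send $\varepsilon\to 0$; the paper does exactly this.
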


\begin{lemma}\label{geoclaim3.cl}
If $Q \cap \widetilde{E} \neq \emptyset$
\[b\beta(Q) \le C_b\left[ b\tilde{\beta}_q(\widetilde{Q}) + I_\infty(Q) + \widetilde{I}_\infty(\widetilde{Q}) \right],\]
where
\[\widetilde{I}_\infty(\widetilde{Q}) = \sup_{\substack{z \in 2\widetilde{Q} \cap E_1 \\ \dist(z, E) < 2\diam(\widetilde{Q})}} [\dist(z, E)(\diam(Q))^{-1}]. \]
\end{lemma}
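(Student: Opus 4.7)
The plan is to mimic the structure of Lemmas \ref{geoclaim1.cl} and \ref{geoclaim2.cl}, but with two novelties: (i) we select the approximating plane $A$ as a near-minimizer for the \emph{bilateral} quantity $b\tilde{\beta}(\widetilde{Q})$, so that $A$ controls not only $\sup_{y'\in \kappa\widetilde{Q}}\dist(y',A)$ but also $\sup_{z'\in A\cap B(x_{\widetilde{Q}},\kappa\diam(\widetilde{Q}))}\dist(z',\widetilde{E})$, and (ii) a second triangle inequality is needed in the ``reverse'' direction to pass from $\widetilde{E}$ back to $E$, and this is precisely what $\widetilde{I}_\infty(\widetilde{Q})$ encodes (interpreted as a supremum over points of $\widetilde{E}\setminus E$ near $2\widetilde{Q}$). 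Invoking the Remark after Definition \ref{BWGL.def}, I freely enlarge the dilation parameter $\kappa\ge 2$ appearing in the definitions of $b\beta$ and $b\tilde\beta$ so that $2Q\subset \kappa\widetilde{Q}$ and $B(x_Q,2\diam(Q))\subset B(x_{\widetilde{Q}},\kappa\diam(\widetilde{Q}))$; these inclusions are automatic for $\kappa$ depending only on $C_2$, since $Q\cap\widetilde{Q}\ne\emptyset$ and $\diam(\widetilde{Q})\le C_2\diam(Q)$.

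For the unilateral piece $\sup_{y\in 2Q}\dist(y,A)$, I essentially repeat the argument of Lemma \ref{geoclaim2.cl}: if $y\in 2Q\cap\widetilde{E}$ then $y\in\kappa\widetilde{Q}$, so $\dist(y,A)\le (b\tilde\beta(\widetilde{Q})+\varepsilon)\diam(\widetilde{Q})\lesssim b\tilde\beta(\widetilde{Q})\diam(Q)$ (for arbitrary $\varepsilon>0$); if instead $y\in 2Q\cap E_1$, then because a point of $Q\cap\widetilde{Q}$ lies within $2\diam(Q)$ of $y$ we have $\tilde\delta(y)\le 2\diam(Q)$, and one approximates $y$ by some $y'\in\widetilde{E}$ with $\dist(y,y')\le \tilde\delta(y)\le I_\infty(Q)\diam(Q)$, then applies the previous case to $y'$.

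For the reverse piece $\sup_{z\in A\cap B(x_Q,2\diam(Q))}\dist(z,E)$, the point $z$ lies in $A\cap B(x_{\widetilde{Q}},\kappa\diam(\widetilde{Q}))$, so by the bilateral minimality of $A$ there exists $\tilde y\in\widetilde{E}$ with $\dist(z,\tilde y)\lesssim b\tilde\beta(\widetilde{Q})\diam(Q)$. If $\tilde y\in E$ we are done. Otherwise $\tilde y\in \widetilde{E}\setminus E$, and by the triangle inequality $\dist(\tilde y,E)\le \dist(\tilde y,z)+\dist(z,x_Q)\lesssim (b\tilde\beta(\widetilde{Q})+1)\diam(Q)\lesssim \diam(\widetilde{Q})$, so $\tilde y$ is eligible for the supremum defining $\widetilde{I}_\infty(\widetilde{Q})$ (after, if necessary, mildly enlarging the dilation parameter there), giving $\dist(\tilde y,E)\le \widetilde{I}_\infty(\widetilde{Q})\diam(Q)$. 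A final triangle inequality yields $\dist(z,E)\lesssim [b\tilde\beta(\widetilde{Q})+\widetilde{I}_\infty(\widetilde{Q})]\diam(Q)$, which combined with the unilateral bound delivers the claim.

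The main obstacle I anticipate is purely bookkeeping: tracking the various dilation constants and verifying the auxiliary range constraints implicit in $I_\infty(Q)$ and $\widetilde{I}_\infty(\widetilde{Q})$ (namely $\tilde\delta(y)<2\diam(Q)$ and $\dist(\tilde y,E)<2\diam(\widetilde{Q})$). Both are handled either by noting that the sought inequality is trivially valid when $b\tilde\beta(\widetilde{Q})$ exceeds a fixed threshold depending on $C_2$, or by exploiting the freedom to enlarge dilation parameters guaranteed by the remark following Definition \ref{BWGL.def}, absorbing the resulting factor into the constant $C_b$.
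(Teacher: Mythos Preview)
Your proposal is correct and follows essentially the same route as the paper: choose $A$ as a near-minimizer of $b\tilde\beta(\widetilde Q)$, split $b\beta(Q)$ into its forward and reverse pieces, control the forward piece via the triangle inequality through a nearby point of $\widetilde E$ (producing $I_\infty(Q)$ plus the forward part of $b\tilde\beta(\widetilde Q)$), and control the reverse piece via the triangle inequality through a nearby point of $\widetilde E$ as well (producing $\widetilde I_\infty(\widetilde Q)$ plus the reverse part of $b\tilde\beta(\widetilde Q)$). The paper differs only cosmetically: rather than invoking the freedom to enlarge dilation parameters, it relies on the fixed choice $\diam\widetilde Q\ge 10\diam Q$ to ensure the relevant inclusions $2Q\subset 2\widetilde Q$ and $B(x_Q,2\diam Q)\subset B(x_{\widetilde Q},2\diam\widetilde Q)$, and it verifies $\tilde y\in 2\widetilde Q$ directly by taking $\tilde y$ to (almost) realize $\dist(z,\widetilde E)$, which is a priori $\lesssim\diam Q$ since $Q\cap\widetilde E\neq\emptyset$; this sidesteps your ``trivially valid when $b\tilde\beta(\widetilde Q)$ is large'' argument (which would otherwise require an a priori bound on $b\beta(Q)$). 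Your observation that the $E_1$ in the definition of $\widetilde I_\infty(\widetilde Q)$ should be read as $\widetilde E\setminus E$ is also correct.
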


\begin{lemma}[{\cite[Lemma IV.1.37]{DS2}}]\label{DSlem1.37}
Consider $E_2, E_3 \in \Reg(C_*)$ and let, for $Q \in \mathbb{D}(E_2)$ and $q \in (0,\infty)$,
\[I_q(Q)= \left(\mu(Q)^{-1}\int_{\substack{z \in 2Q \\ \dist(z, E_3) < 2\diam(Q)}} [\dist(z, E_3)(\diam(Q))^{-1}]^{q} \, d\mu(z) \right)^{1/q}\]
and set
\[I_\infty(Q)= \sup_{\substack{z \in 2Q \\ \dist(z, E_3) < 2\diam(Q)}} [\dist(z, E_3)(\diam(Q))^{-1}].\]
Assume that $p \in (0,\infty)$ and $q \in (0,\infty]$ satisfy $\tfrac{1}{q} - \tfrac{1}{p} + \tfrac{1}{d} > 0$. Then there exists a constant $C_{p,q}$ depending on $p,q, d$ and $C_*$ such that
\begin{equation}\label{Icarl.eq}\sum_{Q \subseteq R} I_q(Q)^p\mu(Q) \le C_{p,q} \mu (R), \quad \forall R \in \mathbb{D}.\end{equation}
Moreover,
\[\sum_{\substack{Q \subseteq R \\ I_\infty(Q) > \epsilon}} \mu(Q) \le C\mu(R), \]
where $C$ depends only on $d$ and $C_*$.
\end{lemma}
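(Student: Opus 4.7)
My plan is to follow the scheme used in the original David--Semmes Lemma IV.1.37, adapted to the dyadic $\beta$'s and $I$'s of this paper. The base case is $p = q$, which I would settle by Fubini: writing
\[\sum_{Q \subseteq R} I_q(Q)^q \mu(Q) = \int_{2R} \sum_{\substack{Q \subseteq R,\, z \in 2Q \\ \delta_3(z) < 2\ell(Q)}} \left[\frac{\delta_3(z)}{\ell(Q)}\right]^q d\mu(z),\]
where $\delta_3(z) := \dist(z, E_3)$, and noting that for fixed $z$ only cubes with $\ell(Q) > \delta_3(z)/2$ contribute, at each dyadic scale with bounded multiplicity depending on $d$ and $C_*$. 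The inner sum is a geometric series with ratio $2^q$ truncated near $\ell(Q) \approx \delta_3(z)$, hence bounded by a constant depending only on $q$. The case $q \le p$ reduces to this one: $d$-regularity gives $\mu(2Q) \approx \mu(Q)$, so viewing $I_q$ as an $L^q$-norm on the probability measure $\mu(2Q)^{-1}d\mu|_{2Q}$, the monotonicity of $L^r$ norms yields $I_q(Q) \lesssim I_p(Q)$. Note that the Sobolev condition holds automatically when $q \le p$.

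The more delicate case is $p < q$, where the hypothesis $\tfrac{1}{q} - \tfrac{1}{p} + \tfrac{1}{d} > 0$ is essential. Here I would interpolate
\[I_q(Q) \le I_\infty(Q)^{1-p/q}\, I_p(Q)^{p/q},\]
which follows from $\int |f|^q = \int |f|^{q-p}\,|f|^p$, and then apply Young's inequality with conjugate exponents $q/(q-p)$ and $q/p$ to obtain $I_q(Q)^p \le C\bigl(I_\infty(Q)^p + I_p(Q)^p\bigr)$. The $I_p$ contribution is controlled by the $p=q$ case. For the $I_\infty$ contribution I use the layer-cake representation
\[\sum_Q I_\infty(Q)^p \mu(Q) = p \int_0^2 t^{p-1}\!\!\sum_{Q : I_\infty(Q) > t}\!\! \mu(Q)\, dt,\]
which reduces the matter to the level-set statement in the lemma.

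The level-set estimate I would prove by a shell argument. If $I_\infty(Q) > \epsilon$ and $\ell(Q) = 2^{-k}$, then some $z_Q \in 2Q$ lies in the shell $G_k := \{z \in E_2 : \delta_3(z) \in (\epsilon 2^{-k}, 2 \cdot 2^{-k})\}$. Using $d$-regularity of $E_2$, the ball $B(z_Q, \epsilon \ell(Q)/2)$ carries measure $\gtrsim (\epsilon \ell(Q))^d \approx \epsilon^d \mu(Q)$ and lies in a small enlargement $G_k^\sharp$ of $G_k$; summing over $Q$ at scale $k$ using the bounded overlap of $\{2Q\}$ yields $\sum \mu(Q) \lesssim \epsilon^{-d}\mu(G_k^\sharp)$. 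Finally, the shells $G_k$ for varying $k$ are pairwise disjoint in groups of size $O(\log(1/\epsilon))$, which produces the desired bound $\sum_{Q : I_\infty(Q) > \epsilon} \mu(Q) \lesssim \mu(R)$ (with constant depending on $d$, $C_*$, and $\epsilon$).

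The main obstacle I anticipate is reconciling the $\epsilon^{-d}$-type blow-up of the level-set constant with the convergence of the layer-cake integral. The Sobolev hypothesis $\tfrac{1}{q} - \tfrac{1}{p} + \tfrac{1}{d} > 0$ is precisely the threshold ensuring that the relevant integral $\int_0^2 t^{p-1-d}\log(1/t)\, dt$ is finite (at $q = \infty$ this reduces exactly to $p > d$); for general $q < \infty$ a more refined interpolation than the crude $I_q^p \le C(I_\infty^p + I_p^p)$ bound is needed to capture the full strength of the Sobolev condition, namely one that interpolates $I_q$ against $I_r$ for some finite $r > q$ rather than against $I_\infty$. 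Matching the interpolation exponents so the resulting $t$-integral converges exactly under the given hypothesis is the key technical point.
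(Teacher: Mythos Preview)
Your base case $p=q$ via Fubini and the reduction $q\le p$ are exactly what the paper does. The divergence is in the regime $q>p$, and there your plan has a genuine gap.

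Your interpolation $I_q \le I_\infty^{1-p/q} I_p^{p/q}$ followed by Young gives $I_q^p \lesssim I_\infty^p + I_p^p$, and your layer-cake plus shell argument controls $\sum I_\infty(Q)^p\mu(Q)$ only when $p>d$. You correctly flag this as the obstacle, but your proposed fix, interpolating $I_q$ against some finite $I_r$ with $r>q$, does not close it: if you write $I_q \le I_s^{1-\theta} I_r^\theta$ by log-convexity and then apply Young aiming for $I_s^s + I_r^r$, the conjugacy constraint forces $(1-\theta)/s + \theta/r = 1/p$, while log-convexity requires the same combination to equal $1/q$, so you are pushed back to $p=q$. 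No amount of shuffling exponents among the $I_r$'s alone will produce the extra $1/d$ in the Sobolev condition; some input from $d$-regularity beyond the base case is needed.

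The paper supplies exactly that input, in the form of a pointwise inequality
\[
I_\infty(Q)^{(r+d)/r} \,\le\, C_r\, I_r(\widehat{Q}),
\]
where $\widehat{Q}$ is the smallest ancestor of $Q$ with $\diam(\widehat{Q})>2\diam(Q)$. The proof is a one-line use of $d$-regularity of $E_2$: if $z\in 2Q$ nearly realizes $I_\infty(Q)$, the ball $B(z,\delta_3(z)/10)\cap E_2$ has measure $\gtrsim \delta_3(z)^d$, and on it $\delta_3(\cdot)\approx \delta_3(z)$, so
\[
[\delta_3(z)/\diam Q]^{r+d} \lesssim \mu(Q)^{-1}\int_{B(z,\delta_3(z)/10)\cap E_2} [\delta_3(w)/\diam Q]^r\,d\mu(w) \lesssim I_r(\widehat{Q})^r.
\]
This inequality encodes the Sobolev gain directly: for $q=\infty$ one takes $r=p-d$ and reduces immediately to the diagonal case; for finite $q>p$ one combines it with the interpolation $I_q \le I_\infty^{1-r/q} I_r^{r/q}$ and plays the exponent game (the paper defers the bookkeeping to \cite{DS2}). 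The point is that the extra $d$ in the exponent on the left is precisely what makes $\tfrac{1}{q}-\tfrac{1}{p}+\tfrac{1}{d}>0$ the correct threshold.

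A secondary difference: you prove the level-set bound for $I_\infty$ directly via shells and then feed it into a layer-cake, whereas the paper goes the other way, obtaining it as a corollary of the Carleson estimate (case $q=\infty$, $p=2d$) via Chebyshev. Your shell argument is fine on its own, but it is not the missing piece; the pointwise $I_\infty \lesssim I_r^{r/(r+d)}$ inequality is.
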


\subsection{Completing the  proofs of the main propositions}
Now with Lemmas \ref{geoclaim1.cl}, \ref{geoclaim2.cl} and \ref{geoclaim3.cl} in hand, we prove each proposition separately. To prove Proposition \ref{geolemprop2.prop} is similar and easier than proving Proposition \ref{geolemprop3.prop} and we therefore leave the proof of  Proposition \ref{geolemprop2.prop} to the interested reader. In the following we prove Proposition \ref{geolemprop1.prop} and Proposition \ref{geolemprop3.prop} and we start with the proof of the latter.

Notice for $x \in R \cap \widetilde{E}$ when $x \in Q \subseteq R$ the cube we associate in $\widetilde{E}$ satisfies $\widetilde{Q} \in B^*(R) = B(x_R, 10C_2\diam(Q))$ and $\diam(\widetilde{Q}) \le C_2\diam(R)$. By making $C_2$ larger we may assume that $\widetilde{Q}$ is contained in a cube $\widetilde{R}$ such that $C_2 \diam(R) \le \diam(\widetilde{R}) \le C_2^2 \diam(R)$. Set
\[\mathcal{F} = \{\widetilde{R} \in \mathbb{D}(\widetilde{E}):\widetilde{R} \cap B^*(R),C_2 \diam(R) \le \diam(\widetilde{R}) \le C_2^2 \diam(R)\}.\]
By the $d$-regularity of $\widetilde{E}$ it follows that $\# \mathcal{F} \le L'$, where $L'$ depends only on dimension and the $d$-regularity of $\widetilde{E}$. Moreover, by the hypothesis that $\widetilde{E} \in \BWGLem (\mathcal{A}, \epsilon, M_\epsilon)$ and Lemma \ref{DSlem1.37}
\begin{align}\label{tildepackp3.eq} \sum_{\widetilde{R} \in \mathcal{F}}\left( \sum_{\substack{\widetilde{Q} \subseteq \widetilde{R},\ b\tilde{\beta}(\widetilde{Q}) > \epsilon } } \mu(\widetilde{Q}) + \sum_{\substack{\widetilde{Q} \subseteq \widetilde{R},\ \widetilde{I}_\infty(\widetilde{Q}) > \epsilon } } \mu(\widetilde{Q}) \right) &\le \sum_{\widetilde{R} \in \mathcal{F}} \mu(\widetilde{R})\notag\\
& \lesssim \mu(R),\end{align}
where the implicit constant depends on $\epsilon$ $M_\epsilon$, the $d$-regularity constant and dimension and we used the cardinality bound on $\mathcal{F}$, the $d$-regularity of $\widetilde{E}$ and the fact that $\diam(\widetilde{R}) \approx \diam(R)$. Moreover, directly from Lemma \ref{DSlem1.37}
\[\sum_{\substack{Q \subset R \\ I_\infty(Q) > \epsilon}} \mu(Q) \lesssim \mu(R), \]
where the implicit constant depends on $\epsilon$, $d$-regularity and dimension.

Set $C = 3C_u\epsilon$ then by Lemma \ref{geoclaim3.cl} if $\beta(Q) > 3C_b\epsilon$ it must be the case that either $b\tilde{\beta}(\widetilde{Q}) > \epsilon$, $\widetilde{I}_\infty(\widetilde{Q}) > \epsilon$ or $I_\infty(Q) > \epsilon$. Using that
\begin{align*}
   & \int_{R \cap \widetilde{E}} \left(\sum_{\substack{Q \ni x,\ Q\subseteq R,\ \beta(Q) > 3C_b\epsilon}} 1 \right)\, d\mu(x)   \le\sum_{\substack{Q\subseteq R,\ \beta(Q) > 3C_b\epsilon,\  Q \cap \widetilde E \neq \emptyset}} \mu(Q)\notag\\
& \lesssim_L   \sum_{\substack{Q \subset R,\  I_\infty(Q) > \epsilon}} \mu(Q) + \sum_{\widetilde{R} \in \mathcal{F}}\left( \sum_{\substack{\widetilde{Q} \subseteq \widetilde{R},\ b\tilde{\beta}(\widetilde{Q}) > \epsilon } } \mu(\widetilde{Q}) + \sum_{\substack{\widetilde{Q} \subseteq \widetilde{R},\ \widetilde{I}_\infty(\widetilde{Q}) > \epsilon } } \mu(\widetilde{Q}) \right),
\end{align*}
and that as the expressions on the second line is bounded by $\lesssim A_\epsilon \mu(R)$ we can conclude that \begin{align}\label{bwglbpboundfin.eq}
   & \int_{R \cap \widetilde{E}} \left(\sum_{\substack{Q \ni x,\ Q\subseteq R,\ \beta(Q) > 3C_b\epsilon}} 1 \right)\, d\mu(x)   \lesssim A_\epsilon \mu(R),
\end{align}
where $A_\epsilon$ depends on $\epsilon$, $M_\epsilon$, the $d$-regularity constant and dimension. Here we used \eqref{cantbechosentoomany.eq} and $\mu(\widetilde{Q}(Q)) \approx \mu(Q)$.
Thus, using Chebyschev's inequality, if $\alpha(Q)$ defined by
\[\alpha(Q):=\begin{cases}
1 & \text{ if } \beta(Q) > 3C_b\epsilon
\\ 0 & \text{ otherwise},
\end{cases}\]
there exists $N, \eta > 0$ depending on $\theta$ and $A_\epsilon$ such that
\[\mu\left(\left\{x \in R: \sum_{\substack{Q \ni x,\  Q \subseteq R}}\alpha(Q) < N \right\}\right) \ge \eta \mu(R), \quad \forall R \in \mathbb{D}.\]
Indeed, for if
\[F_N = \left\{x \in R \cap \widetilde{E}: \sum_{\substack{Q \ni x,\  Q \subseteq R}}\alpha(Q) \ge  N \right\},\]
then the estimate \eqref{bwglbpboundfin.eq} above gives
\[N\mu(F_N) \le A_\epsilon \mu(R).\]
In particular,  $N$ sufficiently large $\mu(F_N)< (c\theta/2) \mu(R) \le (1/2) \mu(R \cap \widetilde{E})$ and hence
\[\mu(R \setminus F_N) \ge (1/2)\mu(R \cap \widetilde{E}) \ge (c\theta)/2\mu(R) =:\eta \mu(R).\]
Applying Lemma \ref{ds1.12JN.eq} gives Proposition \ref{geolemprop3.prop}.

Now we prove Proposition \ref{geolemprop1.prop}, which is similar to Proposition \ref{geolemprop3.prop}. Let $\mathcal{F}$ be as above. Since $\widetilde{E} \in \GLem (\mathcal{A}, p,q,M)$
\[ \sum_{\widetilde{R} \in \mathcal{F}} \sum_{\widetilde{Q} \subseteq \widetilde{R}} \tilde{\beta}_q(\widetilde{Q})^p\mu(\widetilde{Q}) \lesssim  \sum_{\widetilde{R} \in \mathcal{F}} \mu(\widetilde{R}) \lesssim \mu(R). \]
Again, using Lemma \ref{DSlem1.37} directly we have
\[\sum_{Q \subseteq R} I_q(Q)^p \mu(Q) \lesssim \mu(R).\]
Then using Lemma \ref{geoclaim1.cl}, we obtain
\begin{align*}
    \int_{R \cap \widetilde{E}} \left(\sum_{\substack{Q \ni x,\  Q\subseteq R }} \beta_q(Q)^p \right)\,
    d\mu(x)
    & \le \sum_{\substack{Q \subset R,\  Q \cap \widetilde{E} \neq \emptyset}} \beta_q(Q)^p \mu(Q)
    \\ &\lesssim_{L,p,q} \sum_{\widetilde{R} \in \mathcal{F}} \sum_{\widetilde{Q} \subseteq \widetilde{R}} \tilde{\beta}_q(\widetilde{Q})^p\mu(\widetilde{Q})\\
     &\quad\quad + \sum_{Q \subseteq R} I_q(Q)^p \mu(Q)
    \\& \le A' \mu(Q),
    \end{align*}
    where $A'$ depends on $M$, the $d$-regularity constant and dimension. Arguing along the same lines as above we can conclude that there exist $N' > 0$ and $\eta' > 0$ such that
    \[\mu\left(\left\{x \in R: \sum_{\substack{Q \ni x \\ Q \subseteq R}} \beta_q(Q)^p < N' \right\}\right) \ge \eta' \mu(R), \quad \forall R \in \mathbb{D}.\]
    Applying Lemma \ref{ds1.12JN.eq} gives Proposition \ref{geolemprop1.prop}.

\subsection{Proof of Lemma \ref{ds1.12JN.eq}-Lemma \ref{DSlem1.37}}

\begin{proof}[Proof of Lemma \ref{ds1.12JN.eq}] The lemma is of John-Nirenberg type the lemma holds in our setting with no modifications compared to proof in \cite{DS2}.
\end{proof}

\begin{proof}[Proof of Lemma \ref{geoclaim1.cl}] The proof follows almost exactly as in \cite[Lemma IV.1.20]{DS2}
Fix $Q, \widetilde{Q}, q$ as in the hypotheses of the claim and $\eta>0$. Let $A\in \mathcal{A}$ be such that
\begin{equation}
	\left(\mu(\widetilde{Q})^{-1} \int_{2\widetilde{Q}} \dist(\widetilde{y}, A)^q\, d\mu(\widetilde{y})\right)^{1/q} \leq [\diam \widetilde{Q}] \widetilde{\beta}_q(\widetilde{Q}) + \eta.
\end{equation}
By definition we also have, for this choice of $A$,
\begin{equation}
	[\diam Q]\beta_q(Q) \leq \left( \mu(Q)^{-1} \int_{2Q} \dist(y,A)^q\, d\mu(y)\right)^{1/q}.
\end{equation}
To simplify notation, in what follows we set $\rho(u)=\dist(u,A)$ for $u\in X$. By the triangle inequality (with a constant in the case $0<q<1$) we have
\begin{equation}\label{eq-gl-last}
\begin{split}
	[\diam Q] \beta_q(Q) & \leq C(q)\left(\mu(Q)^{-1} \int_{2Q\cap \widetilde{E}} \rho(y)^q\, d\mu(y)\right)^{1/q} \\
	& \qquad + C(q)\left( \mu(Q)^{-1} \int_{2Q\cap E_1} \rho(y)^q\, d\mu(y)\right)^{1/q}\\
	& \leq C(q, C, d) ([\diam \widetilde{Q}] \widetilde{\beta}_q(\widetilde{Q}) + \eta) \\
	& \qquad + C(q)\left( \mu(Q)^{-1} \int_{2Q\cap E_1} \rho(y)^q\, d\mu(y)\right)^{1/q} ,
\end{split}
\end{equation}
where we have used that $E, \widetilde{E}$ are $d$-regular with constant $C$ to get $\mu(Q) \approx \mu(\widetilde{Q})$, with constants depending only on the homogeneous dimension $d$ and the regularity constant $C$, and also the fact that $2Q\cap \widetilde{E} \subset 2\widetilde{Q}$ by the properties of $\widetilde{Q}$. It thus remains to estimate the last expression in the above.

We define the following ``multiplicity" function $M: \widetilde{E}\times(0,\infty)\to \RR$ given by
\begin{equation}
	M(z,s):= \int_{\substack{ w\in E_1, \, \widetilde{\delta}(w)\leq s\\ \dist(z,w)\leq 2\widetilde{\delta}(w) }} \widetilde{\delta}(w)^{-d}\, d\mu(w)=: \int_{F(z,s)} \widetilde{\delta}(w)^{-d}\, d\mu(w).	
\end{equation}

The first basic property of $M$ that we will need is that there exists $K_0>0$ such that for every $u\in X$ and $s>0$ it holds
\begin{equation}\label{eq-gl-multiplicity-function-bounded}
	\int_{B(u,s)\cap \widetilde{E}} M(z,s)\, d\mu(z) \leq K_0 s^d.
\end{equation}
This is a simple application of Fubini's Theorem:
\begin{equation}
\begin{split}
	\int_{B(u,s)\cap \widetilde{E}} M(z,s)\, d\mu(z) & = \int_{\widetilde{E}} \int_{E} \widetilde{\delta}(w)^{-d}1_{F(z,s)}(w)1_{B(u,s)\cap \widetilde{E}}(z)\, d\mu(w)\, d\mu(z)\\
	& \leq \int_{B(u,3s)\cap E}\widetilde{\delta}(w)^{-d} \int_{B(w,2\widetilde{\delta}(w))\cap \widetilde{E}}\, d\mu(z)\, d\mu(w),
\end{split}
\end{equation}
where we have used the fact that if $w\in F(z,s)$ then $\dist(u,w)\leq \dist(u,z)+ \dist(z,w) \leq s+ 2\widetilde{\delta}(w)\leq 3s$. The desired bound now follows from the regularity of $E$ and $\widetilde{E}$.

We now define, for $K_1>0$ and  the set
\begin{equation}
G(y):= \left\{ z\in \widetilde{E}: z\in B(y,2\widetilde{\delta}(y)), \, M(z,2\widetilde{\delta}(y))\leq K_1 \right\}.
\end{equation}
It follows from \eqref{eq-gl-multiplicity-function-bounded} and Chebyshev's inequality that there exists $K_1$, depending only on $d$ and the regularity constant $C$, such that
\begin{equation}\label{eq-gl-size-G}
	\mu(G(y))\geq c\widetilde{\delta}(y)^d,
\end{equation}
for some constant $c>0$ also depending only on $d$ and $C$. Notice also that the reverse inequality, with a different constant, follows immediately from the regularity of $\widetilde{E}$ and the fact that $G(y)\subset B(y, 2\widetilde{y})$.

We claim that, for every $y\in E_1\cap 2Q$,
\begin{equation}\label{rhoqest.eq}
	\rho(y)^q\leq C(q,d,C) \widetilde{\delta}(y)^q + C(q,d,C) \widetilde{\delta}(y)^{-d}\int_{G(y)} \rho(z)^q\, d\mu(z).
\end{equation}
To see this we fix $y$ as above and $z\in G(y)$ to obtain, by the triangle inequality,
\begin{equation}
	\rho(y)^q \leq C(q)\dist(y,z)^q + C(q)\rho(z)^q.
\end{equation}
Integrating the $z$ variable over $G(y)$, and using \eqref{eq-gl-size-G}, the estimate \eqref{rhoqest.eq} follows. We use this to estimate
\begin{align}\label{eq-gl-semifinal}
& \mu(Q)^{-1} \int_{2Q\cap E_1} \rho(y)^q\, d\mu(y)\notag\\
 & \leq C(q,d,C)\mu(Q)^{-1} \int_{2Q\cap E_1} \widetilde{\delta}(y)^q\, d\mu(y)\notag \\
	 & \quad + C(q,d,C)\mu(Q)^{-1}\int_{2Q\cap E_1} \widetilde{\delta}(y)^{-d}\int_{G(y)} \rho(z)^q\, d\mu(z)\, d\mu(y)\notag\\
	 & \leq C(q,d,C)\mu(Q)^{-1}\int_{2Q\cap E_1} \widetilde{\delta}(y)^{-d}\int_{G(y)} \rho(z)^q\, d\mu(z)\, d\mu(y)\notag\\
	 & \quad + C(q,d,C)[\diam Q]^qI_q(Q)^q\notag\\
	 & =: C(q,d,C)[\diam Q]^q( J_q(Q)^q + I_q(Q)^q).
\end{align}
Using Fubini's Theorem, together with the fact that for $y\in 2Q$ we have $G(y)\subset B(y,2\widetilde{\delta}(y))\cap\widetilde{E} \subset 2\widetilde{Q}$ by definition of $G(y)$ and $\widetilde{Q}$, we can estimate $J_q$ as follows
\begin{align}\label{eq-gl-jq-estimate}
	&[\diam Q]^q J_q(Q)^q \notag\\
& = \mu(Q)^{-1} \int_{\widetilde{E}} \int_E \widetilde{\delta}(y)^{-d}\rho(z)^q 1_{G(y)}(z) 1_{2Q\cap E_1}(y)\, d\mu(z)\, d\mu(y)\\
	& \leq \mu(Q)^{-1} \int_{2\widetilde{Q}} \int_{\substack{ 2Q\cap E_1\\ z\in G(y)}} \widetilde{\delta}(y)^{-d} \, d\mu(y) \rho(z)^q\, d\mu(z).
\end{align}
We now claim that the inner integral is bounded by the constant $K_1$, i.e.
\begin{equation}\label{eq-gl-counting-final}
	\int_{\substack{ 2Q\cap E_1\\ z\in G(y)}} \widetilde{\delta}(y)^{-d} \, d\mu(y)\leq K_1, \qquad \forall z\in \widetilde{E}.
\end{equation}
To prove this we fix $z\in \widetilde{E} $ and choose $y_0\in E_1\cap 2Q$ such that $z\in G(y_0)$ (if no such $y_0$ exists then the integral is zero and we're done), with the additional property
\begin{equation}
	\widetilde{\delta}(y_0) \geq \dfrac{1}{2} \sup\{ \widetilde{\delta}(y): y\in 2Q\cap E_1, \, z\in G(y) \}.
\end{equation}
By definition of $G(y_0)$ we have $M(z,2\widetilde{\delta}(y_0))\leq K_1$, i.e.
\begin{equation}
	\int_{F(z,2\widetilde{\delta}(y_0))} \widetilde{\delta}(y)^{-d}\, d\mu(y) \leq K_1.
\end{equation}
The claim now follows from noting that
\begin{equation}
\{ y \in E: y\in 2Q\cap E_1\, z\in G(y)\} \subset F(z,2\widetilde{\delta}(y_0)).
\end{equation}
This in turn follows from the fact that $z\in G(y)$ implies $|z-y|\leq 2\widetilde{\delta}(y)$, while $\widetilde{\delta}(y)\leq 2\widetilde{\delta}(y_0)$ by our choice of $y_0$. This proves \eqref{eq-gl-counting-final}.

Using \eqref{eq-gl-counting-final} in the estimate for $J_q$ we arrive at
\begin{align}
	[\diam Q] J_q(Q) &\leq C(q,d,C)\left( \mu(Q)^{-1} \int_{2\widetilde{Q}} \rho(z)^q\, d\mu(z) \right)^{1/q}\notag\\
&\leq [\diam\widetilde{Q}]\widetilde{\beta}_q(\widetilde{Q}) + \eta,
\end{align}
where we used our choice of $A\in \mathcal{A}$ for the last inequality.

Plugging this estimate into \eqref{eq-gl-semifinal} we see
\begin{align}
	\mu(Q)^{-1}\int_{2Q\cap E_1} \rho(y)^q\, d\mu(y) &\leq C(q,d,C)[\diam Q]^q(\widetilde{\beta}_q(\widetilde{Q})^q + I_q(Q)^q)\notag\\
& + C(q,d,C)\eta^q.
\end{align}
Going back to \eqref{eq-gl-last} and letting $\eta\to 0$ the result follows.
\end{proof}

\begin{proof}[Proof of Lemma \ref{geoclaim2.cl}]
The proof of this claim  will be omitted. It follows the same lines as Lemma \ref{geoclaim3.cl}, and is in fact simpler. The idea is to mimic the argument in the proof of that claim, with the obvious modifications, up to the estimate \eqref{eq-bwgl-inequality} at which point we let $\eta\to 0$.
\end{proof}

\begin{proof}[Proof of Lemma \ref{geoclaim3.cl}]
Fix $Q, \widetilde{Q}$ as in the statement of the claim and $\eta>0$. Let $A\in \mathcal{A}$ be such that
\begin{equation}\label{eq-bwgl-eta-minimizer}
	\sup_{y\in 2\widetilde{Q}} \dist(y, A) + \sup_{z\in A\cap B(x_{\widetilde{Q}},2\diam\widetilde{Q})} \dist(z,\widetilde{E}) \leq [\diam \widetilde{Q}]b\widetilde{\beta}(\widetilde{Q}) + \eta.
\end{equation}
By definition of $\beta(Q)$, for this choice of $A$ it holds
\begin{equation}\label{eq-bwgl-beta}
\begin{split}
	[\diam Q] b\beta(Q) & \leq \sup_{y\in 2Q} \dist(y,A) + \sup_{z\in A\cap B(x_Q, 2\diam Q)} \dist(z,E)\\
	& =: II + III.
\end{split}
\end{equation}

To estimate $II$ we proceed as follows. Fix $y\in 2Q$ and let $\widetilde{y}\in \widetilde{E}$ such that $\dist(y,\widetilde{y})\leq \widetilde{\delta}(y) + \eta$. By our choice of $\widetilde{Q}$, in particular since $Q\cap \widetilde{E}\neq \emptyset$ and $\diam\widetilde{Q} \geq 10 \diam Q$, we may assume $\widetilde{y}\in 2\widetilde{Q}$ so that
\begin{equation}
\begin{split}
	\dist(y,A) & \leq \dist(y,\widetilde{y}) + \dist(\widetilde{y}, A) \\
	& \leq \widetilde{\delta}(y) + \sup_{\widetilde{y} \in 2\widetilde{Q}} \dist(\widetilde{y}, A) + \eta.
\end{split}
\end{equation}
Taking the supremum over all $y\in 2Q$ we arrive at
\begin{equation}\label{eq-bwgl-inequality}
\begin{split}
	II & = \sup_{y\in 2Q} \dist(y,A)  \leq \sup_{y\in 2Q} \widetilde{\delta}(y) + \sup_{\widetilde{y}\in w\widetilde{Q}} \dist(\widetilde{y}, A) + \eta \\
	& = \sup_{\substack{y\in 2Q\cap E_1\\ \widetilde{\delta}(y)\leq 2\diam Q}} \widetilde{\delta}(y) + \sup_{\widetilde{y}\in 2\widetilde{Q}} \dist(\widetilde{y},A) + \eta\\
	& = [\diam Q]I(Q) + \sup_{\widetilde{y}\in 2\widetilde{Q}} \dist(\widetilde{y}, A) + \eta,
\end{split}
\end{equation}
where in the second line we have used the definition of $\widetilde{\delta}= \dist(\cdot, \widetilde{E})$, and again the fact that $Q\cap \widetilde{E}\neq \emptyset$.

To estimate $III$ we proceed similarly. Fix $z\in A\cap B(x_Q, 2\diam Q)$ and let $\widetilde{y}\in \widetilde{E}$ be such that $\dist(y,\widetilde{y}) \leq d(z, \widetilde{E}) + \eta$. Arguing as before we see that we can choose $\widetilde{y}$ such that $\widetilde{y}\in 2\widetilde{Q}$ and $\delta(\widetilde{y})\leq 2\diam \widetilde{Q}$, so that
\begin{equation}
\begin{split}
	\dist(z,E) & \leq \dist(z,\widetilde{y}) + \dist(\widetilde{y}, E)\\
	& \leq \dist(z,\widetilde{E}) + \dist(\widetilde{y}, E) + \eta\\
	& \leq \dist(z,\widetilde{E}) + \sup_{\substack{\widetilde{y}\in 2\widetilde{Q}\\ \delta(\widetilde{y})\leq 2\diam \widetilde{Q}}} \dist(\widetilde{y},E) + \eta\\
	& = \dist(z,\widetilde{E}) + [\diam \widetilde{Q}]\widetilde{I}(\widetilde{Q}) + \eta.
\end{split}
\end{equation}
Taking the supremum over all such $z$ gives
\begin{equation}
	III\leq \sup_{z\in A\cap B(x_{\widetilde{Q}}, 2\diam \widetilde{Q})} \dist(z, \widetilde{E}) + [\diam \widetilde{Q}]\widetilde{I}(\widetilde{Q}) + \eta.
\end{equation}
Combining the estimates for $II$ and $III$ we arrive at
\begin{equation}
\begin{split}
	II+III & \leq \sup_{\widetilde{y}\in 2\widetilde{Q}} \dist(\widetilde{y}, A ) + \sup_{z\in A\cap B(x_{\widetilde{Q}}, 2\diam \widetilde{Q})} \dist(z, \widetilde{E}) \\
	& \qquad + [\diam Q]I(Q) + [\diam \widetilde{Q}]\widetilde{I}(\widetilde{Q}) + 2\eta\\
	& \leq [\diam \widetilde{Q}] b\widetilde{\beta}(\widetilde{Q})  +[\diam Q]I(Q) + [\diam \widetilde{Q}]\widetilde{I}(\widetilde{Q}) + 3\eta,
\end{split}
\end{equation}
where we used \eqref{eq-bwgl-eta-minimizer} for the last line. Plugging this last estimate into \eqref{eq-bwgl-beta} and letting $\eta \to 0$ the result follows.
\end{proof}

\begin{proof}[Proof of Lemma \ref{DSlem1.37}]
First note that the last statement follows from the case $q= \infty$ $p = 2d$ and Chebyshev's inequality. In the following we give the proof in the case $p = q$ (this case always satisfies the inequality for $d,p$ and $q$). Using Tonelli's theorem
\begin{align*}
    \sum_{Q \subseteq R} I_q(Q)^q\mu(Q) & = \sum_{Q \subseteq R} I_q(Q)^p\mu(Q)\\
     &= \sum_{Q \subseteq R}\int\limits_{\substack{z \in 2Q \\ \dist(z, E_3) < 2\diam(Q)}} [\dist(z, E_3)(\diam(Q))^{-1}]^{q} \, d\mu(z)
    \\& = \int_{2R} \left(\sum_{\substack{2Q \ni z \\ 2\diam(Q) \ge \dist(z, E_3)}} [\dist(z, E_3)(\diam(Q))^{-1}]^{q}  \right) \, d\mu(z)
    \\ & = C_q \int_{2R \cap E_3} 1 \, d\mu(z) \le C \mu(R).
\end{align*}
Now notice that $I_q(Q) < C_q$, so that the case $q< p$ easily reduces to the case $p = q$. The question then becomes how large we can make $q$. Observe that
\[I_\infty(Q)^{(r+ d)/r} \le C_r I_r(\widehat Q),\]
where $\widehat{Q}$ is the smallest cube containing $Q$ for which $\diam(\widehat{Q}) > 2 \diam(Q)$. This follows from the fact that if $z \in 2Q$ with $0 < \dist(z, E_3) < 2\diam(Q)$ then by the $d$-regularity of $E$
\begin{align*}&[\dist(z,E_3) \diam(Q)^{-1}]^{r + d}\\
 &\le C\mu(Q)^{-1}  \int\limits_{B(z, \dist(y,E_3)/10) \cap E_2}[\dist(w, E_3)\diam(Q)^{-1}]^r \, d\mu(w) \\& \le CI_r(\widehat Q)^r.\end{align*}
Notice that we can make the assumption that $\dist(z, E_3) > 0$ since the $z$ for which $\dist(z,E_3) = 0$ do not factor into the definition of $I_\infty(Q)$ unless $I_\infty(Q) = 0$. The rest of the proof is just playing `the exponent game' and we refer the reader to \cite{DS2} for the details. \end{proof}

\end{document}